\newtheorem{theorem}{Theorem}
\newtheorem{assumption}[theorem]{Assumption}
\newtheorem{lemma}[theorem]{Lemma}
\newtheorem{proposition}[theorem]{Proposition}
\newtheorem{remark}[theorem]{Remark}
\newcommand{\vip}{\vskip.2cm}
\newcommand{\R}{{\mathbb{R}}}
\newcommand{\E}{\mathbb{E}}
\newcommand{\COMMENTAIRE}[1]{}
\newcommand{\PP}{{\mathbb{P}}}
\newcommand\numberthis{\addtocounter{equation}{1}\tag{\theequation}}
\begin{document}

\title[Estimating fast mean-reverting jumps]{Estimating fast mean-reverting jumps in electricity market models}

\author{Thomas Deschatre, Olivier F\'eron and Marc Hoffmann}

\address{Thomas Deschatre, EDF Lab Paris-Saclay and Universit\'e Paris-Dauphine PSL, CNRS, Ceremade, 75016 Paris, France}

\email{thomas-t.deschatre@edf.fr}

\address{Olivier F\'eron, EDF Lab Paris-Saclay and FiME, Laboratoire de Finance des March\'es de l'Energie, 91120 Palaiseau, France.}

\email{olivier-2.feron@edf.fr}

\address{Marc Hoffmann, Universit\'e Paris-Dauphine PSL, CNRS, Ceremade, 75016 Paris, France. (to whom correspondence should be addressed.)}

\email{hoffmann@ceremade.dauphine.fr}

\begin{abstract}
Based on empirical evidence of fast mean-reverting spikes, electricity spot prices are often modeled $X+Z^\beta$ as the sum of a continuous It\^o semimartingale $X$ and a  mean-reverting compound Poisson process $Z_t^\beta = \int_0^t\int_{\R} xe^{-\beta(t-s)}\underline{p}(ds,dt)$ where $\underline{p}(ds,dt)$ is Poisson random measure with intensity $\lambda ds\otimes dt$. In a first part, we investigate the estimation of $(\lambda,\beta)$  from discrete observations and establish asymptotic efficiency in various asymptotic settings. In a second part, we discuss the use of our inference results for correcting the value of forward contracts on electricity markets in presence of spikes. We implement our method on real data in the French, German and Australian market over 2015 and 2016 and show in particular the effect of spike modelling on the valuation of certain strip options. In particular, we show that some out-of-the-money options have a significant value if we incorporate spikes in our modelling, while having a value close to $0$ otherwise. 
\end{abstract}

\maketitle

\textbf{Mathematics Subject Classification (2010)}: 
62M86, 60J75, 60G35, 60F05.

\textbf{Keywords}:  Financial statistics, Discrete observations, Electricity market modelling, Derivatives pricing.


\section{Introduction}

\subsection{Motivation} \label{subsec:motiv}
A striking empirical feature of electricity spot prices is the presence of spikes, that can be described by a jump in the price process immediately followed by a fast mean reversion (see Figure \ref{spotdata} showing the behaviour of electricity spot prices in different markets over one year of historical data). These spikes are due to the non-storability of electricity, an abrupt change in the demand or the offer (due to weather conditions, outages and so on) having a direct impact on prices. For risk management purposes, the modelling of these extreme events is essential. And, due to the non-storability of electricity, the modelling of forward contracts (used as hedging products) are also needed.  
If $(S_t)_{t \geq 0}$ denotes the electricity spot price, the forward price $f(t,T)$ at time $t$ delivering 1 megawatt hour (MWh) at time $T$ can be defined as
\begin{equation} \label{value forward contracts}
f(t,T) = \mathbb{E}\big[S_T\,\big|\,\mathcal{F}_t\big],\;\;t\geq 0
\end{equation}
where $\mathcal{F}_t$ is the available information up to time $t$ and the expectation is taken under a risk-neutral probability. In this context, one usually faces two major issues: 
	first and prior to data analysis, a stochastic model that captures the main characteristics of spot prices, and especially the presence of fast mean-reverting spikes has to be set, however simple enough to give tractable formulas for the forward prices $f(t,T)$.
	Second, the chosen model must be calibrated with efficient statistical procedures to show its adequacy to the data, and to properly quantify risk measures. The main difficulty is the estimation of the characteristics of the spikes.

\medskip
Several models for spikes in electricity spot prices have been proposed in the litterature. Cartea and Figueroa \cite{cartea05} extend the popular and tractable approach of Lucia {\it et al.} \cite{lucia02} by introducing jumps in the price process, resulting in the model
$$
\log S_t = \rho(t) + Y_t,\;\;dY_t = -\beta Y_tdt+\sigma(t)dW_t+\log J dN_t,\;\;t\geq 0, 
$$
where $\rho(t)$ and $\sigma(t)$ are deterministic components, $(W_t)_{t \geq 0}$ is a Wiener process, $(N_t)_{t \geq 0}$ is a Poisson process and $J$ is the jump size drawn proportional to a log-normal distribution.     
A similar model is proposed in Geman and Roncoroni \cite{geman06}, adding up a threshold parameter that determines the sign of the jumps. In these approaches, the mean reverting coefficient $\beta >0$ is the same for the continuous component and for the spike component. However, statistical evidence shows that the mean reversion of the spike component is much stronger than the one of the Brownian component, see for instance Benth {\it et al.}  \cite{benth12}. The estimated $\beta$ then underestimates the mean reversion of the spike component and overestimates the one of the continuous component. A similar model slightly more realistic is also proposed by Geman and Roncoroni \cite{geman06} but this one does not provide explicit formulas for deriving $f(t,T)$. 
Yet another approach is undertaken in Benth {\it et al.} \cite{benth07,benth12} with multi-factor models:
$$
S_t = \sum_{i=1}^m w_i Y^i_t,\;\;dY^i_t = -\beta_i Y^i_t dt + dL^i_t,\;\;t\geq 0,\;\;i=1, \ldots, m,
$$
for some weights $w_i$, and where $(L^i_t)_{t \geq 0}$ are independent time-inhomogeneous subordinators ensuring that $(S_t)_{t \geq 0}$ remains nonnegative. Benth and collaborators establish in \cite{benth07,benth12} that $m=2$ is sufficient for modelling purposes, each factor $(Y_t^i)_{t \geq 0}$ having its own mean reverting parameter, allowing for a fast mean reversion and a slower one. However, the use of subordinators implies that the volatility of the process seems to be underestimated.
In this model, an estimation procedure for the mean-reverting parameter is proposed in \cite{kluppelberg10}, the pike component being identified via extreme value theory methods.
This estimator which is a modification of the Davis-McCormick estimator,  is proved to be consistent in  \cite{brockwell07}. The same method is used for spike analysis on UK electricity and gas markets in \cite{meyer14}.
Finally, multi-factor models with a Brownian component and a spike component are studied in Meyer and Tankov \cite{meyer08}, Schmidt \cite{schmidt08} and Gonzales {\it et al.} \cite{gonzalez16}. Meyer and Tankov estimate the mean-reverting parameters using spectral methods and the jumps are detected by filtering. In Schmidt \cite{schmidt08}, the parameters of the model are estimated using maximum likelihood with the EM algorithm, implying an approximation of the process with its Euler scheme. 
Gonzales and co-authors develop a Bayesian framework and recover the parameters of the model by MCMC. 
In a more general context than electricity price modelling, Moreno et al.  \cite{moreno11} use a method of moments to estimate the parameters of a jump diffusion model when the log-price is the sum of an arithmetic Brownian motion and a mean reverting compound Poisson process.\\ 

While all these models allow for a good representation of spot prices and spikes, they are not suitable for long-term volatility modelling in the forward prices, which corresponds to the volatility of forward products with delivery far in the future (one quarter to two years). They propose a stationnary modelling of the continuous part of the spot price, which is a mean reverting process or the sum of severals. When time to maturity $T-t$ grows, volatility of $\mathbb{E}(S_T | \mathcal{F}_t)$ goes to zero. To encompass this issue, practitionners use multi-factors model with one factor being non-stationnary, for instance modeled by a Brownian motion. The most used model in practice is the two-factors model, see \cite{schwartz00} for its spot representation and \cite{clewlow99a, clewlow99b, koekebakker05} for the equivalent forward modelling. 
 
\begin{figure}[h!]
    \centering
    \begin{subfigure}[b]{0.4\textwidth}
        \centering
        \includegraphics[width=\textwidth]{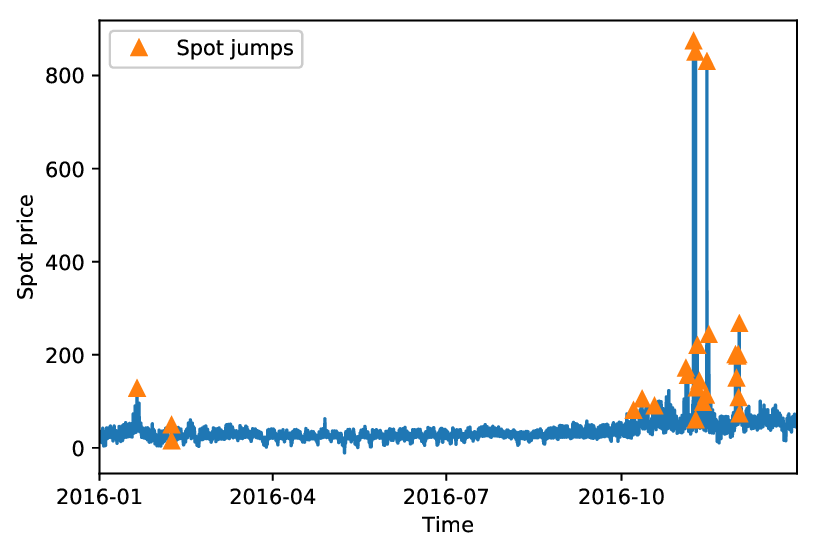}
        \caption{\it French spot price.}
    \end{subfigure}
    \begin{subfigure}[b]{0.4\textwidth}
        \centering
        \includegraphics[width=\textwidth]{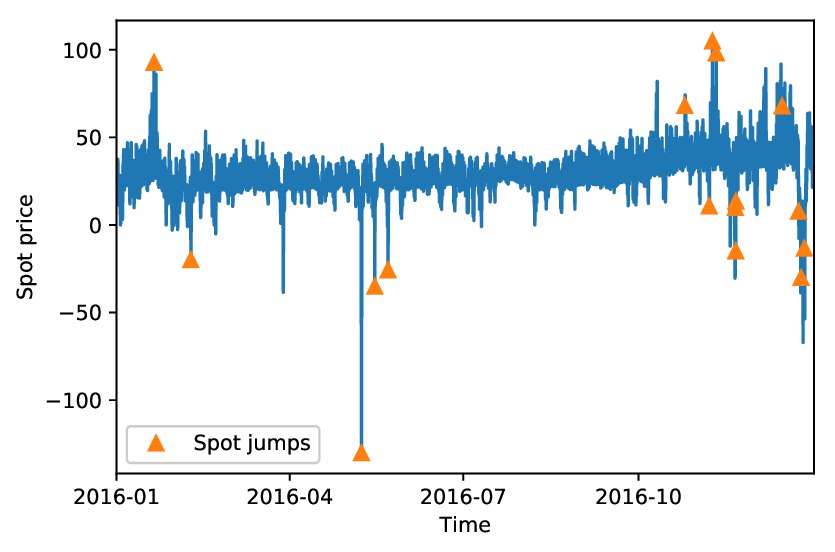}
        \caption{\it German spot price.}
    \end{subfigure}
      
        \begin{subfigure}[b]{0.4\textwidth}
        \centering
        \includegraphics[width=\textwidth]{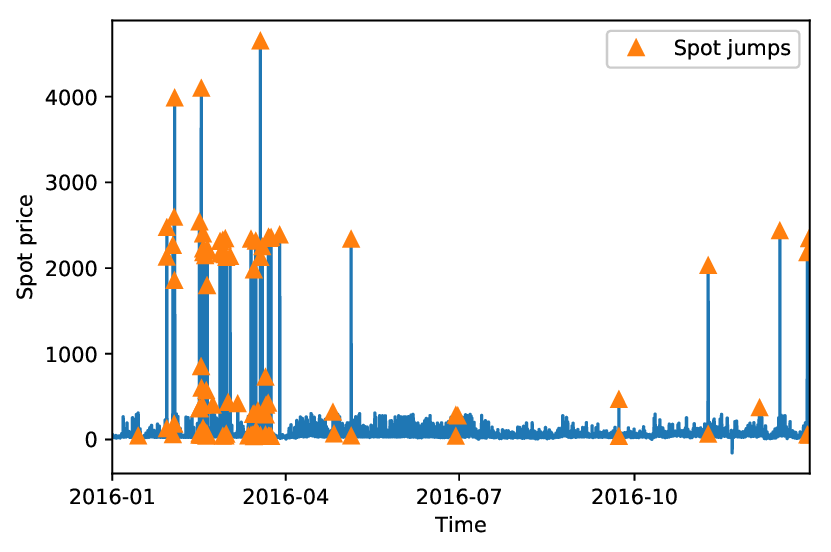}
        \caption{\it Australian spot price.}
    \end{subfigure}
     \caption{\it \label{spotdata} Series of the spot price during the year 2016 for France, Germany and Australia. The frequency of the data is 1 hour for France and Germany and 30 minutes for Australia. Spot jumps are estimated using a threshold of $5\hat{\sigma}\Delta_n^{-0.01}$ where $\hat{\sigma}$ is the multi-power variation of order 20.}
\end{figure}


Most of these papers use calibration procedures to estimate the model parameters, without convergence properties except for the notable exception of \cite{brockwell07,kluppelberg10}. In this paper we propose a theoretical framework of estimation adapted to the models previously cited, i.e. the class of models used for risk management purposes. More precisely, our goal is to construct an estimation procedure for the characteristics of the spike process covering a wide and unifying range of models for the continuous part of spot price (allowing for non-stationarity, stochastic volatility, multiplicative or arithmetic representation and so on).

\subsection{Main results}  \label{pre results}

We consider an extended framework that encompasses \cite{meyer08}, \cite{schmidt08} and \cite{gonzalez16} (when only one spike process if considered). In particular, our approach does not require that the continuous part of the price process is an Ornstein-Uhlenbeck, a necessary condition in the aforementioned models.

\subsubsection*{A semimartingale model with fast mean-reverting jumps} On a rich enough filtered probability space $(\Omega, \mathcal F, ({\mathcal F_t})_{t \geq 0}, \PP)$ that will accommodate all the considered random quantities, we model the electricity spot price $X_t=S_t$ or $X_t=\log S_t$ by 
\begin{equation} \label{our model first part}
X_t = X^c_t + Z_t^{\beta},\;\; t \geq 0,
\end{equation}
where $(X^c_t)_{t \geq _0}$ is a continuous It\^o semimartingale and $(Z_t^{\beta})_{t \geq 0}$ is the so-called {\it  spike process}, governed by a mean-reverting factor $\beta >0$. More specifically, we assume that 
\begin{equation} \label{our model second part}
X^c_t = X_0^c+\int_0^t \mu_s ds + \int_0^t \sigma_s dW_s,\;\;t\geq 0
\end{equation}
where  $(\sigma_t)_{t \geq 0}$ and $(\mu_t)_{t \geq 0}$ are two adapted c\`adl\`ag processes, $(W_t)_{t \geq 0}$ a $(\mathcal F_t)$-standard Brownian motion and 
\begin{equation} \label{our model third part}
Z_t^{\beta} = \int_0^t \int_{\mathbb{R}} x e^{-\beta\left(t-s\right)} \underline{p}\left(ds,dx\right),\;\;t\geq 0,
\end{equation}
with $\underline{p}$ a random Poisson measure on $[0,\infty) \times \mathbb{R}$ independent of $(W_t)_{t \geq 0}$, with intensity 
$$\underline{q} = \lambda \,dt \otimes \nu(dx),$$
for some $\lambda >0$ and a probability measure $\nu(dx)$ on $\R$. We thus model the electricity spot price as a classical continuous It\^o semimartingale $(X_t^c)_{t \geq 0}$ allowing for the usual financial fluctuations and usual models (factor models, mean-reverting models and so on) to which we add a perturbation  $(Z^\beta_t)_{t \geq 0}$ of ``spikes" or ``jumps", triggered by exogenous physical hazard, at intensity $\lambda$ and magnitude $\nu(dx)$, but with a relaxation period $1/\beta$ comparable to $\lambda$ that  
accounts for the absorption of such events by the market toward resulting in stable prices at large scales. The term {\it comparable} is a bit vague at this stage, and will be assessed precisely in Section \ref{les hyp}, enabling us to speak of {\it fast mean-reversion}. In this setting, model \eqref{our model first part}-\eqref{our model second part}-\eqref{our model third part} is well posed and can reproduce, at least {\it visually}, the general shape of electricity spot markets, compare historical data from Figure \ref{spotdata} and sample paths simulations given in Figure \ref{simuX} and detailed in the simulation Section \ref{simulation subsection}.

\begin{figure}[h!]
    \centering
    \begin{subfigure}[b]{0.4\textwidth}
        \centering
       \includegraphics[width=\textwidth]{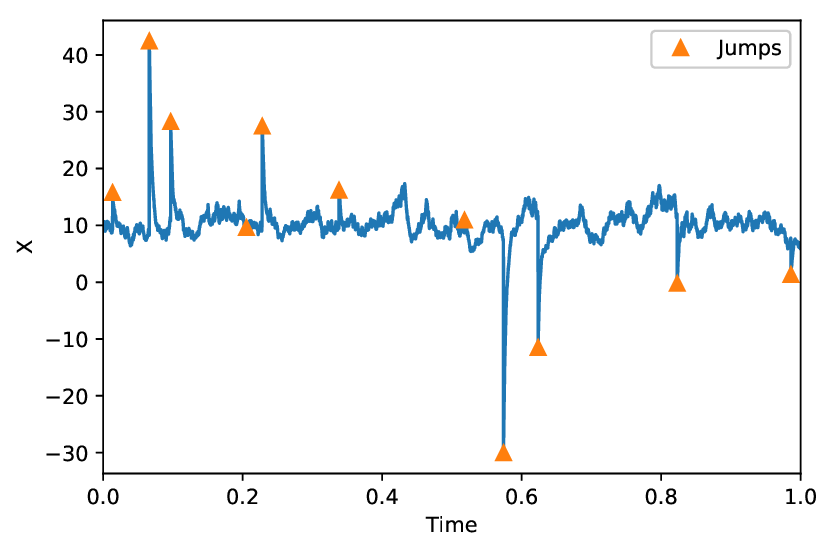}
        \caption{\it $\lambda_n = 10$, $\beta_n = 300$.}
    \end{subfigure}
    \begin{subfigure}[b]{0.4\textwidth}
        \centering
       \includegraphics[width=\textwidth]{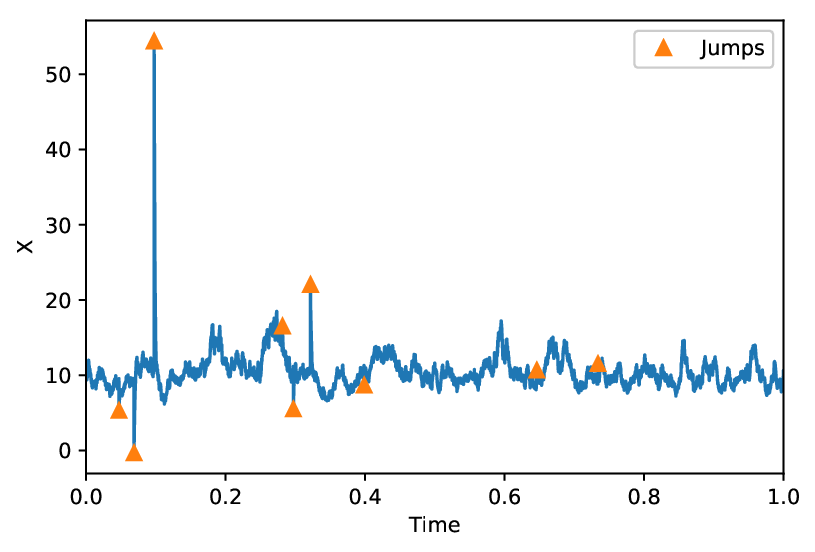}
        \caption{\it $\lambda_n = 10$, $\beta_n = 1000$.}
    \end{subfigure}
      
        \begin{subfigure}[b]{0.4\textwidth}
        \centering
        \includegraphics[width=\textwidth]{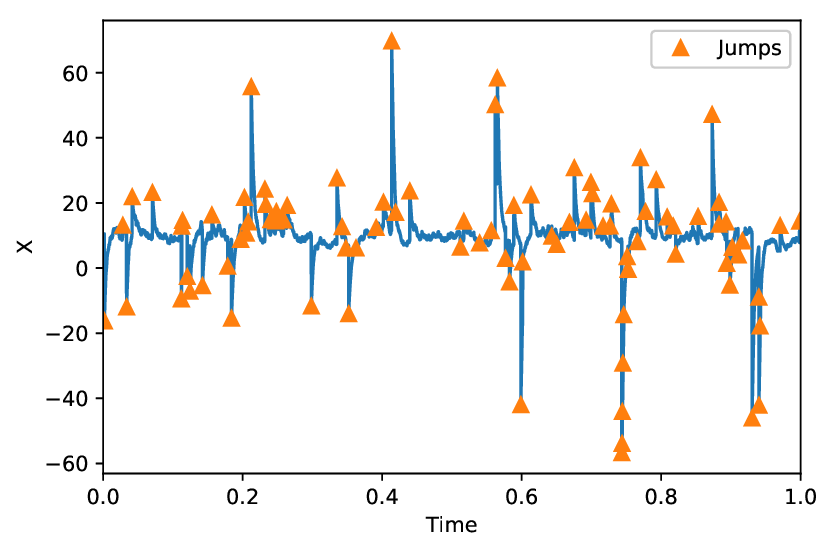}
        \caption{\it $\lambda_n = 75$, $\beta_n = 300$.}
    \end{subfigure}
    
%
    
     \caption{\it \label{simuX}Simulation of the process $X$ in the case of a model having continuous part defined in \eqref{processsimu} and with jump sizes having law $0.4\left(-\mathcal{E}\left(40\right)\right) + 0.6\mathcal{E}\left(30\right)$ for different values of $\lambda_n$ and $\beta_n$.}
\end{figure}

\subsubsection*{Statistical setting}
We assume that we observe the process $(X_t)_{t \geq 0}$ given by \eqref{our model first part}-\eqref{our model second part}-\eqref{our model third part} over the time interval $[0,T]$ on a regular grid 
$$0 =t_{0,n} < t_{1,n} < \ldots, t_{n,n} = T,\;t_{i,n} = i\Delta_n,\;\;\text{for}\;\;0 \leq i \leq n,$$
with mesh $\Delta_n$. Thus we have $n$ (or rather $n+1$) observations
\begin{equation} \label{data}
X^n=(X_0, X_{\Delta_n},\ldots, X_{n\Delta_n}=X_T).
\end{equation}
In the following, for a given process $Y$, we use the classical notations $\Delta_i^n Y = Y_{t_{i,n}} -Y_{t_{i-1,n}}$ and $\Delta Y_{s} = Y_s - Y_{s^-}$. \\
Asymptotics are taken as $n\rightarrow \infty$. We assume that $T$ is constant, and we take $T=1$ with no loss of generality. Equivalently, $\Delta_n = 1/n \rightarrow 0$ as $n \rightarrow \infty$. This asymptotic setting is usually referred to as  the ``high-frequency" framework (for instance the classical textbook \cite{ait14} by A{\"\i}t-Sahalia and Jacod), but this terminology is a bit misleading: our framework certainly belongs to statistical finance, but it has no link to high-frequency finance or microstructure modelling of any sort. In practice, we apply our methodology to three markets: the French EPEX, the German EPEX and the Australian electricity spot in Queensland, see Section \ref{sec:practical implementation} below. We use data between 2015, Jan. 01 and 2016, Dec. 31. with hourly data (even less in the case of Australian data), so that $n=17064$ is considered to be large. Equivalently, one hour is considered to be small in front of 2 years. In our setting, the important fact about the assumption that $T$ is fixed is that we leave out any stationarity or ergodicity of the underlying process. We thus make an implicit statistical robustness assumption, which we believe is of importance when considering recent and changing energy markets over such time horizons.\\

The parameters of interest are $\lambda,\beta>0$ that govern our correction formulas (see the application to forward contracts prices $f(t,T)$ below). In particular we leave out the issue of identifying the continuous semimartingale part $(X_t^c)_{t \geq 0}$ {\it i.e.} the drift $(\mu_t)_{t \geq 0}$ and the volatility process $(\sigma_t)_{t \geq 0}$ as well as the jump distribution $\nu(dx)$ (only an estimation for the moments of $\nu$ is proposed).\\ 

The mean-reversion factor over the observation increment $[t_{i-1,n},t_{i,n}]$ is of size $\beta \Delta_n$, and by requiring $\beta \Delta_n$ to be large  compared to the order of magnitude $\sqrt{\Delta_n}$ of $\Delta_i^n X^c$ , we may hope to recover $\beta$ asymptotically.  We thus introduce the asymptotic setting $\beta = \beta_n$ with the requirement
\begin{equation} \label{first asymptotics}
\beta = \beta_n \rightarrow \infty\;\;\text{while}\;\;\beta_n \sqrt{\Delta_n} \rightarrow \infty.
\end{equation}
Condition \eqref{first asymptotics} becomes
\begin{equation} \label{second asymptotics} \beta_n \sqrt{\Delta_n \lambda_n} \rightarrow \infty
\end{equation}
if we let $\lambda = \lambda_n \rightarrow \infty$. A second crucial assumption is
\begin{equation} \label{third asymptotics}
\beta_n \Delta_n \lesssim 1,
\end{equation} 
since otherwise, the spikes caused by the jumps of $\underline p$ are absorbed by the Brownian fluctuations of $X^c$ due to the fast relaxation period $1/\beta_n$ and therefore cannot be detected by $X^n$. 

\subsubsection*{Statistical results}

Heavily relying on classical techniques in high-frequency finance (for instance \cite[Theorem 10.26, p.374]{ait14}), we estimate in a first step the times and sizes of the jumps which are random quantities, taking into account the interplay between $\beta_n$, $\lambda_n$ and $\Delta_n$ dictated by the asymptotic regime \eqref{first asymptotics}-\eqref{second asymptotics}-\eqref{third asymptotics}, see Proposition \ref{estimatorn}. 
In a second step, we construct an estimator of $\beta_n$
based on an estimator $\widehat{s}_n$ of the right-derivative or {\it instantaneous slope} of $t \mapsto Z^{\beta}_t$ right after a jump is detected. The estimator $\widehat s_n$ is based on averaging of instantaneous slope proxies of the form $\Delta X_{T_q}(1-e^{-\beta_n\Delta_n})$ that govern the relaxation effect after a jump of size $\Delta X_{T_q}$ has occurred at time $T_q$ and it enables us to consider 
$$\widehat{\beta}_n = -\frac{1}{\Delta_n} \log\left(1-\widehat{s}_n\right)$$
as our estimator of $\beta_n$. Since $\beta_n$ itself varies with $n$ and grows to infinity, the notion of convergence has to be considered carefully. Under suitable assumptions, we prove in Theorem \ref{estimatorcj} that the relative error
\begin{equation} \label{conv est beta n}
\mathcal E_n=\frac{\widehat{\beta}_n-\beta_n}{\beta_n} \rightarrow 0
\end{equation}
in probability as $n \rightarrow \infty$.
The error $\mathcal E_n$ has two components: a first term of order
$1/(\beta_n \sqrt{\lambda_n \Delta_n})$ due to Brownian oscillations, and a second term of order $\min\{\lambda_n/\beta_n, 1/\sqrt{\beta_n}\} + \sqrt{\lambda_n}/\beta_n$ that accounts for the effect of jumps that are still present in the price process despite the relaxation effect. When $\beta_n \sqrt{\Delta_n \lambda_n} \rightarrow \infty$ and $\lambda_n/\beta_n \lesssim 1$, we have $\mathcal E_n$ converges to $0$. If we assume further $\sqrt{\beta_n}/\lambda_n \rightarrow 0$, we obtain a central limit theorem for $\mathcal E_n$ with a Gaussian limit and an explicit rate of convergence that depends on the interplay between $\lambda_n,\beta_n$ and $\Delta_n$. We have an analogous result (although less demanding) for the estimation of the jump intensity $\lambda_n$ detailed in Proposition \ref{estimatorlambda}. 


\subsubsection*{Application to pricing forward contracts} We show in Theorem \ref{forward} that in the model for the spot price (with $X = S$) defined by  \eqref{our model first part}-\eqref{our model second part}-\eqref{our model third part}, the price $f(t,T)$ of a forward contract is given by 
$$
f\left(t,T\right) = f^c\left(t,T\right) + f^{\beta}\left(t,T\right),
$$
where
$$
f^c\left(t,T\right) = \mathbb{E}\big[X^c_T\,|\, \mathcal{F}_t\big]
$$
and
$$
f^{\beta}\left(t,T\right) = e^{-\beta\left(T-t\right)}Z_t^{\beta} + \frac{\lambda}{\beta}\int_{\R}x\nu(dx) \left(1-e^{-\beta\left(T-t\right)}\right).
$$
The term $f^c\left(t,T\right)$ corresponds to the price of the forward contract in a continuous case framework. The computation of this value has been extensively studied for different continuous models and it is known analytically for the most common models, see for instance \cite{benth07,benth12} among others. 
The term $f^{\beta}\left(t,T\right)$ is a correction that follows from our approach. It is of order $\lambda/\beta$ and is usually small for the applications we have in mind, see the practical implementation Sections \ref{sec:practical implementation} and \ref{application}. On balance, the presence of spikes does not significantly impact the price of forward contracts to within these order of magnitudes. This is consistent with our data, for which spikes are not observed on forward prices. By neglecting the term $f^{\beta}\left(t,T\right)$, we can calibrate the process $X^c_t$ to the observed forward prices and the process $Z^\beta_t$ to the observed spot prices with the estimation procedure proposed in this paper. The results are similar for the modelling of the logarithm of the spot price ({\it i.e} when $X = \log(S)$) by \eqref{our model first part}-\eqref{our model second part}-\eqref{our model third part}, see Theorem \ref{forwardlog}. We implement the prices of the forward contracts and Call options from a model calibrated to historical data on electricity prices in Section \ref{application} and we show the impact of the spike modelling on the valuation of a strip of options with payoff of the form $\sum_{i=1}^I \left(S_{t_{i,n}} - K\right)^+$ for different times $t_{i,n}$. As expected, the value of this option increases if we add significant large spikes. In particular, we show that some out-of-the-money options have a significant value if we incorporate spikes in our modelling, while having a value close to $0$ otherwise. 


\subsection{Organisation of the paper}
Section \ref{mainresults} develops a rigorous mathematical framework for the stochastic model \eqref{our model first part}-\eqref{our model second part}-\eqref{our model third part} and gives the explicit construction of the estimators described in Section \ref{pre results} above together with their asymptotic properties in Propositions \ref{estimatorn}, \ref{estimatorlambda} and Theorem \ref{estimatorcj}. Section \ref{sec:practical implementation} establishes the numerical feasibility and consistency of our statistical estimation results on simulated and real data, based over two years (2015 and 2016) of electricity spot prices in three different markets (French, German and Australian). Section \ref{application} is devoted to the application of our model and statistical results to forward contracts. We establish in Theorem \ref{forward} a correction formula to get analytical forward prices and study the valuation of a strip of European Call options. The proofs are given in Section \ref{proofs}. 

\section{Statistical results}
\label{mainresults}

\subsection{Model assumptions} \label{les hyp}
We consider the process $(X_t)_{t \geq 0}$ defined by \eqref{our model first part}-\eqref{our model second part}-\eqref{our model third part} in Section \ref{pre results}. Following closely the standard notation of A{\"\i}t-Sahalia and Jacod \cite{ait14}, if $\nu$ is a positive measure and $f$ a $\nu$-integrable function, we write $f\left(x\right) \star \nu = \int_{\mathbb{R}} f\left(x\right) \nu\left(dx\right)$.
Remember also that we work over a finite time horizon $T=1$. 

\begin{assumption} \label{assumpsigma} 
We have $\E[(X_t^{c})^2]<\infty$ for every $t\geq 0$. Moreover, $t \mapsto \sigma_t$ is continuous on $[0,1]$ and for some deterministic  
$\underline{\sigma}, \bar{\sigma}, c_0>0$, we have $0 < \underline{\sigma}^2 \leq \inf_{t}\sigma_t^2  \leq \sup_t\sigma_t^2 \leq \bar{\sigma}^2$, $\sup_t|\mu_t| \leq c_0$, 
$\nu(\{0\})=0$ and $ |x|^2 \star \nu < \infty$.
\end{assumption}

Since our asymptotic results will be given in distribution (see Theorem \ref{estimatorcj} below), the conditions on the drift $(\mu_t)_{t\geq 0}$ and $(\sigma_t)_{t \geq 0}$ can substantially be weakened (in order to accommodate for instance diffusion coefficients of the form $\sigma_t = X_t^ch(X_t^c)$ with a bounded $h$ or even locally integrable) by standard localisation procedures, see for instance \cite[Section 4.4.1]{jacod11}. We observe \eqref{data}
and asymptotics are taken as $n \rightarrow \infty$ or equivalently $\Delta_n=n^{-1}\rightarrow 0$. 
We consider different asymptotic regimes for  $\lambda=\lambda_n$ and $\beta = \beta_n$, compare Equations \eqref{first asymptotics}-\eqref{second asymptotics}-\eqref{third asymptotics} and the accompanying discussion in Section \ref{pre results} above.

\begin{assumption} \label{betalambda} We have
$$\liminf_n \lambda_n > 0, \;\;\lambda_n \lesssim \beta_n,\;\;\beta_n \Delta_n \lesssim 1,\;\;\text{and}\;\;\lambda_n \Delta_n \rightarrow 0.$$
\end{assumption}
The condition $\lambda_n \lesssim \beta_n$ ensures the stability of $X_t$ as $n \rightarrow \infty$ since
$\mathrm{Var}(X_t)= \mathrm{Var}\left(X_t^c\right)+ \mathrm{Var}(Z_t^{\beta_n}) = \mathrm{Var}\left(X_t^c\right) + |x|^2 \star \nu \,\frac{\lambda_nt}{2\beta_n}\left(1-e^{-2\beta_n}\right) \rightarrow \infty$
if $\sup_n\lambda_n/\beta_n=\infty$. The condition $\beta_n \Delta_n \lesssim 1$ is necessary to identify spikes (or jumps): otherwise, a spike that occurs in the interval $\left(\left(i-1\right)\Delta_n, i\Delta_n\right]$ will be absorbed by the relaxation effect before we observe $X_{i\Delta_n}$. Finally, the condition $\lambda_n \Delta_n \rightarrow 0$ controls the no accumulation of jumps within the rate of observation.\\

In order to estimate the times and sizes of the jumps, we need the following:

\begin{assumption} \label{assumpestimatorn} We have either $(I)$  or $(II)$, where
\begin{enumerate}
\item[(I)] For some $\varpi \in \left(0,1/2\right)$:
$$\mathrm{(i)}\;\lambda_n^2 \Delta_n \rightarrow 0,\;\;\mathrm{(ii)}\;\lambda_n {\bf 1}_{\{|x| > \Delta_n^{1/2-\varpi}/(\beta_n \Delta_n)\}} \star \nu \rightarrow 0,\;\;\text{and}\;\;\mathrm{(iii)}\;\lambda_n {\bf 1}_{\{|x| < \Delta_n^{1/2-\varpi}\}} \star \nu \rightarrow 0.$$
\item[(II)]  For some $\varpi \in \left(0,1/2\right)$ and a sequence of integers $k_n \geq 1$:
$$\mathrm{(i)}\;\lambda_n^2 \Delta_n k^2_n \rightarrow 0,\;\;\mathrm{(ii)}\;\;\lambda_n {\bf 1}_{\{|x| > e^{\beta_n \Delta_n k_n} \Delta_n^{1/2-\varpi}\}} \star \nu \rightarrow 0,\;\;\mathrm{(iii)}\;\lambda_n {\bf 1}_{\{|x| < \Delta_n^{1/2-\varpi}\}} \star \nu \rightarrow 0$$
and $\mathrm{(iv)}\;\lambda_n e^{-(\beta_n \Delta_n^{1-\varpi})^2} \rightarrow 0$.
\end{enumerate}
\end{assumption}

Assumption \ref{assumpestimatorn} (I) implies $\beta_n \Delta_n = o\left(1\right)$. Condition (i) ensures that the number of jumps in an interval of size $\Delta_n$ is essentially $1$. In the case where $\beta_n$ is bounded, (ii) is implied by (i) and we have the usual conditions for the detection of jumps (see Mancini \cite{mancini04}). Condition (ii) 
controls the size of the mean-reversion.
Condition (iii) controls the size of the small jumps that cannot converge too fast to 0. If the jumps are bounded below by some constant as Mancini \cite{mancini04} the condition is automatically satisfied.

\smallskip
Assumption \ref{assumpestimatorn} (II) (iv) implies that $\beta_n \Delta_n^{1-\varpi}\rightarrow \infty$ and in particular $\beta_n \Delta_n^{1/2} \rightarrow \infty$, implying that the mean reversion of order $\beta_n \Delta_n$ is stronger than the order of magnitude $\Delta_n^{1/2}$ of Brownian increments.
It also allows for the case $\beta_n \Delta_n \approx 1$. In the setting of  Assumption \ref{assumpestimatorn} (II), the mean reversion is more difficult to distinguish from the jumps and in the case $\beta_n \Delta_n \approx 1$, the jumps and the drift have the same size and are not distinguishable from their size solely. Condition (i) states that there is at most one jump in an interval of size $k_n$ which is large enough for the spike to vanish. Assumption \ref{assumpestimatorn} (II) also implies that $\lambda_n^2/\beta_n \rightarrow 0$. Assumption \ref{assumpestimatorn} (II) allows for high values of $\beta_n$ but the number of jumps needs then to be smaller than in case of Assumption \ref{assumpestimatorn} (I) compared to $\beta_n$. 

\subsection{Estimation of the jump times and $\lambda_n$}
\label{sec_estimation_jumps}
We first construct estimators of the sequence of the jumps and of their intensity $\lambda_n$. Assumption \ref{assumpestimatorn} is in force. Let
$$N_t = \sum_{s \leq t} {\bf 1}_{\{\Delta X_s \neq 0\}},\;\;t\geq 0,$$
denote the number of jumps of $(X_t)_{t \geq 0}$ up to time $t$ and let
$T_1 < T_2 < \cdots < T_q < \cdots$ 
denote the random times at which jumps occur.
By construction, the sizes of jumps $\left(\Delta X_{T_q}\right)_{q \geq 1}$ form a sequence of independent and identically distributed random variables independent of $(N_t)_{t \geq 0}$. 
Let $i(n,q)$ be the random integer such that 
\[\left(i\left(n,q\right)-1\right)\Delta_n < T_q \leq i\left(n,q\right)\Delta_n.\] 
We introduce the increasing sequence
$$1 \leq \mathcal{I}_n\left(1\right) <  \mathcal{I}_n(2) < \ldots  <  \mathcal I_n\big(n_{\max}\big) \leq n$$
of indices $i \in \{1, \ldots, n\}$ defined by the realisation of the following successive events: 
$$\Big\{\frac{|\Delta_i^n X|}{\sqrt{\Delta_n}} > v_n \Big\}\;\;\text{under Assumption \ref{assumpestimatorn}\, (I)}$$
and
$$\Big\{\frac{|\Delta_i^n X|}{\sqrt{\Delta_n}} > v_n, \; \Delta_i^n X \Delta_{i+1}^n X < 0\Big\}\;\;\text{under Assumption \ref{assumpestimatorn}\,(II)}.$$
with $v_n \asymp \Delta_n^{-\varpi}$, following the notations of \cite[Ch. 10]{ait14}. The $\mathcal I_n(k)$ define a subset of $\{1,\ldots, n\}$ that define in turn an estimator of $\lambda$ via
$$\widehat \lambda_n = \mathrm{Card}\big\{\mathcal{I}_n(1), \ldots, \mathcal I_n(n_{\max})\big\} = n_{\max}.$$ 

Under Assumption \ref{assumpestimatorn} (II), we need the supplementary condition $\Delta_i^n X \Delta_{i+1}^n X < 0$ for the following reason: whenever a jump occurs, the mean reverting is dominant in the next observation interval and has a direction opposite to the sign of the jump. Furthermore, it enables us to discard the increments caused by the mean reversion that are large enough to be detected as jumps. Indeed, if we detect a false jump due to the mean reversion effect, the next increments will follow the same dynamics and it will share the same sign with first increment. 
The property that no jump lies within the next observation interval is ensured by the existence of $k_n$ defined in Assumption  \ref{assumpestimatorn} (II).

\begin{remark} \label{rk: two algo}
Note that $\widehat \lambda_n$ is actually defined with two different algorithms, whether Assumption \ref{assumpestimatorn} (I) or  \ref{assumpestimatorn} (II) is taken under consideration. When stating the results, we refer to the same notation $\widehat \lambda_n$ that actually corresponds to two algorithms, which we will refer to as Algorithm (I ) and Algorithm (II) in the proofs and when processing the data, under Assumption \ref{assumpestimatorn} (I) and  \ref{assumpestimatorn} (II) respectively.
\end{remark}

 Let 
\[
\Omega_n = \Big\{\widehat{\lambda}_n = N_1,\forall q \in \{1, ...,N_1\}: T_q \in \big(\mathcal{I}_n(q)\Delta_n-\Delta_n,\mathcal{I}_n(q)\Delta_n\big]\Big\}. 
\]
\begin{proposition} \label{estimatorn} 
Work under Assumptions \ref{assumpsigma}, \ref{betalambda} and \ref{assumpestimatorn}.
We have $\mathbb{P}\left(\Omega_n\right) \rightarrow 1$.
\end{proposition}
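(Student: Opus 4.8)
The plan is to show that with probability tending to one, the detection procedure picks out exactly the increments straddling the true jump times $T_1 < \dots < T_{N_1}$, and nothing else. Write $X = X^c + Z^\beta$ and decompose each increment $\Delta_i^n X = \Delta_i^n X^c + \Delta_i^n Z^\beta$. The argument splits into three parts: (a) control of the continuous part $\Delta_i^n X^c$, (b) lower bound on $\Delta_i^n Z^\beta$ over an interval containing a jump, and (c) upper bound on $\Delta_i^n Z^\beta$ over an interval containing no jump (the relaxation term).

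First I would recall the standard high-frequency estimates: under Assumption \ref{assumpsigma}, $\sup_i \E[|\Delta_i^n X^c|^p] \lesssim \Delta_n^{p/2}$ for all $p$, so that $\max_i |\Delta_i^n X^c| = O_{\PP}(\sqrt{\Delta_n \log n})$, which is $o(\sqrt{\Delta_n}\, v_n)$ since $v_n \asymp \Delta_n^{-\varpi}$ with $\varpi \in (0,1/2)$. Hence no increment of the continuous part alone can exceed the threshold $\sqrt{\Delta_n}\,v_n$, i.e. spurious detections from $X^c$ are asymptotically negligible. Next, for the count: by the condition $\lambda_n \Delta_n \to 0$ (and $\lambda_n^2 \Delta_n \to 0$ in case (I), $\lambda_n^2\Delta_n k_n^2 \to 0$ in case (II)), a Poisson tail bound shows that with probability $\to 1$ every window of length $\Delta_n$ (resp. $k_n\Delta_n$) contains at most one jump time, so $N_1$ jumps fall into $N_1$ distinct increments $i(n,1) < \dots < i(n,N_1)$; this pins down $\mathcal I_n(q)\Delta_n - \Delta_n < T_q \le \mathcal I_n(q)\Delta_n$ once we know detection succeeds exactly on those increments.

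For part (b), on the increment $(i(n,q)-1)\Delta_n, i(n,q)\Delta_n]$ containing $T_q$, the jump contributes $\Delta X_{T_q} e^{-\beta_n(i(n,q)\Delta_n - T_q)}$, and since $\beta_n\Delta_n \lesssim 1$ the exponential factor is bounded below by a positive constant; thus $|\Delta_i^n Z^\beta|$ is of order $|\Delta X_{T_q}|$ up to the small additive relaxation from earlier jumps. Conditions (I)(iii) / (II)(iii), $\lambda_n \mathbf 1_{\{|x| < \Delta_n^{1/2-\varpi}\}} \star \nu \to 0$, guarantee that the jumps that actually occur on $[0,1]$ are, with probability $\to 1$, all of size $\gtrsim \Delta_n^{1/2-\varpi} \asymp \sqrt{\Delta_n}\,v_n$, so these increments cross the threshold. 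For part (c), on an increment with no jump, $\Delta_i^n Z^\beta$ is a sum of relaxation decrements $\Delta X_{T_q}(e^{-\beta_n(t_{i-1,n}-T_q)} - e^{-\beta_n(t_{i,n}-T_q)}) = \Delta X_{T_q}e^{-\beta_n(t_{i-1,n}-T_q)}(1-e^{-\beta_n\Delta_n})$ over past jumps; conditions (I)(ii) / (II)(ii),(iv) are exactly what is needed to show $\max$ of these is $o_{\PP}(\sqrt{\Delta_n}\,v_n)$ — in case (II) one also uses the sign test $\Delta_i^n X\,\Delta_{i+1}^n X < 0$, noting that a genuine jump's relaxation has the opposite sign on the following interval whereas a pure-relaxation increment keeps its sign across consecutive intervals (using that $k_n$ windows are jump-free), so the extra condition removes the large-relaxation false positives that (II)(ii) alone does not kill.

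The main obstacle is part (c) in case (II), i.e. ruling out that the relaxation tail of a large jump is itself detected as a (false) jump when $\beta_n\Delta_n \approx 1$: here the decrement after a jump of size $x$ can be comparable to the threshold, and the argument genuinely needs the combination of the sign constraint $\Delta_i^nX\,\Delta_{i+1}^n X<0$ with the separation guaranteed by $k_n$ (so that the $k_n$ increments following a jump all carry the relaxation sign and cannot be mistaken for a fresh jump of the opposite-then-same pattern), together with the tail condition (II)(iv), $\lambda_n e^{-(\beta_n\Delta_n^{1-\varpi})^2}\to 0$, to control the interaction with Brownian noise. Assembling (a)–(c) and intersecting the finitely many $O_{\PP}$ events gives $\PP(\Omega_n)\to 1$. $\square$
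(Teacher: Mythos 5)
Your proposal follows essentially the same route as the paper: Gaussian tail plus union bound for the continuous increments, a Poisson bound giving at most one jump per $\Delta_n$-window (per $k_n$-window under (II)), condition (iii) to lower-bound the detected increments at the true jump times, conditions (ii)/(iv) to kill the relaxation term on jump-free intervals, and the sign test combined with the $k_n$-separation to handle the regime $\beta_n\Delta_n\asymp 1$. The overall architecture and the attribution of each hypothesis to the step it serves are correct.

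There is, however, one step where you assert rather than prove, and it is the only genuinely non-routine estimate in the argument. On a jump-free interval one has $\Delta_i^n Z^{\beta}=-(1-e^{-\beta_n\Delta_n})Z^{\beta}_{t_{i-1,n}}$, so controlling the ``max of the relaxation decrements'' uniformly in $i$ amounts to controlling $\sup_{t\le 1}\int_0^t\int_{\R}|x|e^{-\beta_n(t-s)}\underline{p}(ds,dx)$. This is \emph{not} the maximal size of a single past jump: it is an accumulation over the $\sim\lambda_n$ past jumps, whose decayed contributions sum to a quantity of mean $\asymp\lambda_n/\beta_n$, which need not vanish under Assumption \ref{betalambda}. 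A naive union bound over individual jumps therefore does not directly yield ``the max is $o_{\PP}(\sqrt{\Delta_n}v_n)$ by (ii)''; even bounding the sum by (max jump size)$\times$(number of effectively contributing jumps) introduces logarithmic losses that condition (ii) as stated does not obviously absorb. The paper closes this by invoking a crossing/supremum bound for shot-noise processes (Bierm\'e et al.), which gives $\PP\bigl(\sup_{t\le 1}\int_0^t\int_{\R}|x|e^{-\beta_n(t-s)}\underline{p}(ds,dx)>u\bigr)\le 2\lambda_n\,\PP(|\xi_\nu|>u)$, i.e.\ a single-jump tail at the price of only a factor $\lambda_n$ --- exactly the shape that (I)(ii) and (II)(ii) are calibrated against. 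The same supremum also enters your part (b) (the ``small additive relaxation from earlier jumps'' at a jump interval) and the sign-test verification under (II). You should either cite such a bound or supply an argument for it; without it, the reduction from conditions (ii)/(iv) to the claimed uniform control is a gap.
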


The proof of Proposition \ref{estimatorn} relies on a result of  A{\"\i}t-Sahalia and Jacod \cite[Theorem 10.26, p.374]{ait14}. However, the presence of a drift term $-\beta_n \int_{0}^t Z_s^{\beta_n} ds$ that depends on $n$ together with the fact that $\lambda_n \rightarrow \infty$ makes the extension not trivial.
Proposition \ref{estimatorn} also provides us with an estimator $\widehat \lambda_n$ of $\lambda_n$. In the case $\lambda_n \rightarrow \infty$, we have the following asymptotic property:

\begin{proposition} \label{estimatorlambda}
Work under Assumption \ref{assumpsigma}, \ref{betalambda} and \ref{assumpestimatorn} and assume that $\lambda_n \rightarrow \infty$. We have
\begin{equation} \label{TCL lambda}
\sqrt{\lambda_n}\frac{\widehat{\lambda}_n - \lambda_n}{\lambda_n} \rightarrow \mathcal N(0,1)
\end{equation}
in distribution as $\lambda_n \rightarrow \infty$. 
\end{proposition}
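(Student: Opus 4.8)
The plan is to leverage Proposition \ref{estimatorn} to reduce the central limit theorem to a statement about the genuine jump count $N_1$ of the Poisson structure. On the event $\Omega_n$ we have exactly $\widehat\lambda_n = N_1$, and since $\PP(\Omega_n)\to 1$, for any bounded continuous test function $g$ we have $\E[g(\sqrt{\lambda_n}(\widehat\lambda_n-\lambda_n)/\lambda_n)] - \E[g(\sqrt{\lambda_n}(N_1-\lambda_n)/\lambda_n)]\to 0$. Hence it suffices to prove that
\[
\frac{N_1-\lambda_n}{\sqrt{\lambda_n}}\longrightarrow \mathcal N(0,1)
\]
in distribution as $\lambda_n\to\infty$. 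Now $N_1$ is a Poisson random variable with mean $\lambda_n$ (the number of points of $\underline p$ with time-coordinate in $[0,1]$, whose distribution is Poisson$(\lambda_n)$ since the intensity is $\lambda_n\,dt\otimes\nu(dx)$ and $\nu$ is a probability measure, so the mass over $[0,1]\times\R$ is $\lambda_n$).

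The remaining step is the classical fact that a Poisson$(\lambda)$ variable, centered and scaled by $\sqrt\lambda$, converges to a standard Gaussian as $\lambda\to\infty$. I would prove this cleanly by characteristic functions: writing $N_t\sim\mathrm{Poisson}(\lambda_n)$, one has $\E\big[\exp\big(iu(N_1-\lambda_n)/\sqrt{\lambda_n}\big)\big] = \exp\big(\lambda_n(e^{iu/\sqrt{\lambda_n}}-1) - iu\sqrt{\lambda_n}\big)$, and a Taylor expansion of the exponent gives $\lambda_n\big(iu/\sqrt{\lambda_n} - u^2/(2\lambda_n) + O(\lambda_n^{-3/2})\big) - iu\sqrt{\lambda_n} = -u^2/2 + o(1)$, which is the characteristic function of $\mathcal N(0,1)$; Lévy's continuity theorem then concludes. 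Alternatively one could invoke that $N_1$ is an infinitely divisible sum — writing $\lambda_n$ along a subsequence as (integer part) $+ $ remainder and decomposing $N_1$ as a sum of $\lfloor\lambda_n\rfloor$ i.i.d. Poisson$(1)$ variables plus a negligible remainder, then applying the Lindeberg–Lévy CLT — but the characteristic function argument is shorter and avoids subsequence bookkeeping.

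The only genuine subtlety, and the step I would be most careful about, is the passage from $\widehat\lambda_n$ to $N_1$ when $\sqrt{\lambda_n}(\widehat\lambda_n-\lambda_n)/\lambda_n$ is not a bounded sequence of random variables: one must argue convergence in distribution survives the modification on a vanishing-probability event, which is immediate since $\{\widehat\lambda_n\ne N_1\}\subset\Omega_n^c$ has probability tending to $0$, so $\sqrt{\lambda_n}(\widehat\lambda_n-N_1)/\lambda_n\to 0$ in probability, and Slutsky's lemma applies. Everything else is routine; no estimate on the continuous part $X^c$, the drift, or the jump law $\nu$ enters here beyond what is already used to establish $\PP(\Omega_n)\to 1$ in Proposition \ref{estimatorn}. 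I would also remark that the normalization in \eqref{TCL lambda} is exactly the one making the Fisher information of the Poisson family appear, so the estimator $\widehat\lambda_n$ is asymptotically efficient in this regime.
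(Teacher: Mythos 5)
Your proof is correct and follows exactly the route the paper intends: the paper declares the result ``straightforward,'' implicitly relying on the reduction $\widehat\lambda_n=N_1$ on $\Omega_n$ with $\PP(\Omega_n)\to 1$ from Proposition \ref{estimatorn}, the fact that $N_1\sim\mathrm{Poisson}(\lambda_n)$, and the classical Gaussian limit for centered and $\sqrt{\lambda_n}$-normalized Poisson variables. Your Slutsky/characteristic-function write-up simply makes explicit the details the paper omits.
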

This result is straightforward: on the event $\Omega_n$, we have $\widehat \lambda_n = N_1$, {\it i.e.} $\widehat \lambda_n$ is a Poisson random variable with parameter $\lambda_n$, for which \eqref{TCL lambda} holds. We conclude thanks to  $\mathbb{P}\left(\Omega_n\right) \rightarrow 1$. 

\begin{remark}
As for the optimality of the result, consider indeed the seemingly richer experiment where one continuously observes a Poisson process $(\mathcal P_t)_{0 \leq t \leq 1}$ with intensity $\lambda >0$. The variable $\mathcal P_1$ is a sufficient statistic and the Cramer-Rao bound tells us that any unbiased estimator $\widehat \lambda$ necessarily satisfies $\E[(\widehat \lambda-\lambda)^2] \geq I(\lambda)^{-1}$, where $I(\lambda) = 1+\lambda^{-1}$ is the Fisher information associated to the observation of $\mathcal P_1$, {\it i.e.} a Poisson random variable with parameter $\lambda$. Equivalently $\E\big[\big(\frac{\widehat \lambda-\lambda}{\lambda}\big)^2\big] \geq (\lambda^{-1}+\lambda^{-2}) \sim \lambda^{-1}$ as $\lambda \rightarrow \infty$ which is consistent with the convergence \eqref{TCL lambda} for fixed $\lambda$.
\end{remark}

\begin{remark}
The assumption $\lambda \rightarrow \infty$ may be acceptable for some electricity markets revealing a lot of extreme events, like for example in Australia. However, it can be questionable for electricity markets like in France or Germany, although the latter is currently revealing a lot of negative prices.
\end{remark}

\medskip
A natural estimator of the jump sizes is $\Delta^n_{\mathcal{I}_n\left(q\right)} X$ for $q \in \{1,...,\widehat{\lambda}_n\}$, see \cite[Theorem 10.21, p.370]{ait14}. In our case, $\Delta^n_{\mathcal{I}_n\left(q\right)} X$ is equal to $\Delta X_{T_q} e^{-\beta_n\left(T_q-\mathcal{I}_n\left(q\right)\Delta_n\right)}$ plus a negligible term. If $\beta_n \Delta_n \to 0$, $\Delta^n_{\mathcal{I}_n\left(q\right)} X$ is then equivalent to $\Delta X_{T_q}$ but if $\beta_n \Delta_n \asymp 1$, the bias $\Delta X_{T_q}\left(1- e^{-\beta_n\left(T_q-\mathcal{I}_n\left(q\right)\Delta_n\right)}\right)$ remains and it is not possible to identify the size of the jump. However, if $\lambda_n \to \infty$, one can infer some statistical properties of the jumps size. 
We have the following result:

\begin{proposition} \label{estimatornu} Work under Assumption \ref{assumpsigma}, \ref{betalambda}, \ref{assumpestimatorn} and assume that $\lambda_n \rightarrow \infty$. Let $m \geq 1 \in \mathbb{N}$ such that $|x|^{2m} \star \nu < \infty$,
\[\frac{m\beta_n \Delta_n }{\left(1-e^{-m\beta_n\Delta_n}\right)\widehat{\lambda}_n} \sum_{q=1}^{\widehat{\lambda}_n} \left(\Delta_{\mathcal{I}_n\left(q\right)} X\right)^m \rightarrow
x^m \star \nu 
\]
in probability as $\lambda_n \rightarrow \infty$. Furthermore, if $|x|^{3m} \star \nu < \infty$, 
\[\frac{\sqrt{\lambda_n}m\beta_n \Delta_n\left(1-e^{-2m\beta_n\Delta_n}\right)}{2(1-e^{-m\beta_n\Delta_n})^2}\left(\frac{m\beta_n \Delta_n }{\left(1-e^{-m\beta_n\Delta_n}\right)\widehat{\lambda}_n} \sum_{q=1}^{\widehat{\lambda}_n} \left(\Delta_{\mathcal{I}_n\left(q\right)} X\right)^m -
x^m \star \nu\right) \rightarrow \mathcal{N}(0,1) 
\]
in distribution as $\lambda_n \rightarrow \infty$.

\end{proposition}
Proof of Proposition \ref{estimatornu} is immediate: it only requires the computation of $\int_0^1 e^{-\beta_n(t-\lfloor \frac{t}{\Delta_n}\rfloor\Delta_n}dt = \frac{1-e^{-\beta_n\Delta_n}}{\beta_n \Delta_n}$ and the use of \cite[Theorem 3]{peccati08} for the central limit theorem. Also, the proof for the case $m = 1$ appears in the proof of Theorem \ref{estimatorcj}. We then omit the proof. Combining this result with our estimator of $\beta_n$ provided below enables us to have an estimator for the moments of $\nu$.

\subsection{Estimation of \texorpdfstring{$\beta_n$}{beta}}
We are ready to construct an estimator of $\beta_n$, based on the estimation of the right-derivative or {\it instantaneous slope} of $t \mapsto Z^{\beta}_t$ right after a jump is detected. The estimator of this right-derivative, say $s_n$, is  based on averaging of instantaneous slope proxies of the form $\Delta X_{T_q}(1-e^{-\beta_n\Delta_n})$ that govern the relaxation effect after a jump of size $\Delta X_{T_q}$ has occurred at time $T_q$ and it enables us to consider 
$\widehat{\beta}_n = -\frac{1}{\Delta_n} \log\left(1-\widehat{s}_n\right)$
as our estimator of $\beta_n$. 

More precisely, let $\mathrm{\mathrm{sgn}}(x)=1$ if $x \geq 0$ and $-1$ otherwise.
On the event $\{\widehat \lambda_n >0\}$, define $\widehat \beta_n$ via 
\begin{equation} \label{construct estim beta}
\exp(-\Delta_n\widehat \beta_n) = \max\Big\{1+\frac{\sum_{q = 1}^{\widehat{\lambda}_n}\mathrm{\mathrm{sgn}}(\Delta^n_{\mathcal{I}_n(q)} X) \big(\Delta^n_{\mathcal{I}_n(q)+1} X + 2\Delta_n\sum_{j=1}^{q-1} \Delta^n_{\mathcal{I}_n(j)} X\big)}{\sum_{q = 1}^{\widehat{\lambda}_n} |\Delta^n_{\mathcal{I}_n(q)} X|}, \Delta_n\Big\}
\end{equation}
and set $\widehat \beta_n=0$ otherwise. Our main result describes precisely the behaviour of $\widehat \beta_n$ under the different asymptotic regimes of interest.

\begin{remark}
The same comments for $\widehat \beta_n$ as in Remark \ref{rk: two algo} for $\widehat \lambda_n$ hold here.
\end{remark}

\begin{theorem} \label{estimatorcj} Work under Assumptions \ref{assumpsigma}, \ref{betalambda} and \ref{assumpestimatorn}. Let $\beta_n \sqrt{\lambda_n \Delta_n} \rightarrow \infty$. On the set $\{\widehat{\lambda}_n > 0\}$, we have
$$\frac{\big|\widehat {\beta}_n - \beta_n\big|}{\beta_n} \lesssim \lambda_n\Delta_n+ \frac{1}{\beta_n\sqrt{\lambda_n\Delta_n}} + \min\left\{\frac{1}{\sqrt{\beta_n}},\frac{\lambda_n}{\beta_n}\right\}
$$
in probability. More precisely, 
\begin{enumerate}
\item[i)] On $\{\widehat{\lambda}_n > 0\}$, we have
\[\frac{\widehat {\beta}_n - \beta_n}{\beta_n} = \mathcal M_n + \mathcal V_n \mathcal J_n^T,\]
where
\[
\mathcal M_{n} = e^{\beta_n \Delta_n} \frac{\lambda_n}{\beta_n} \frac{(x \star \nu) \; (\mathrm{\mathrm{sgn}}(x) \star \nu)}{|x| \star \nu} \big(\frac{e^{\beta_n \Delta_n}-1}{\beta_n \Delta_n} -1\big),\]
$\mathcal V_{n} = ( \mathcal V_n^{(i)})_{1 \leq i \leq 4}
\in \R^4$ is such that
$$
\left\{
    \begin{array}{ll}
    \mathcal V_{n}^{(1)}&=e^{\beta_n \Delta_n}\frac{\sqrt{\lambda_n}}{\sqrt{3} \beta_n |x| \star \nu} \big((\mathrm{\mathrm{sgn}}(x) \star \nu )^2 (|x|^2 \star \nu) + ( x \star \nu )^2 - 2 (\mathrm{\mathrm{sgn}}(x) \star \nu) (|x|^2 \star \nu)\big)^{1/2}, \\ \\
    	 \mathcal V_{n}^{(2)}  &= e^{\beta_n \Delta_n}\min\big\{\big(\tfrac{|x|^2 \star \nu}{(|x| \star \nu )^2} \frac{1}{2\beta_n}\frac{(1-e^{-2\beta_n \Delta_n})}{2\beta_n \Delta_n}\big)^{1/2},\frac{\lambda_n}{\beta_n}\big\}, \\ \\
        \mathcal V_{n}^{(3)} &=e^{\beta_n \Delta_n}\big(\frac{\beta_n \Delta_n}{1-e^{-\beta_n \Delta_n}}\big)\frac{\sqrt{\int_0^1 \sigma_s^2 ds}}{|x| \star \nu \sqrt{ \lambda_n}\beta_n \sqrt{\Delta_n}},\\ \\
   	  \mathcal V_{n}^{(4)}  &=  e^{\beta_n \Delta_n}\frac{\sqrt{\int_0^1 \sigma_s^2 ds} \sqrt{\Delta_n}}{|x| \star \nu \sqrt{\lambda_n}},
    \end{array}
\right.
$$    
 and
$\mathcal J_{n} = ( \mathcal J_n^{(i)})_{1 \leq i \leq 4} \in \R^4$ is bounded in probability as $n \rightarrow \infty$. 
\item[ii)] If $\lambda_n \rightarrow \infty$, then 
$$
(\mathcal J_{n}^{(3)},\mathcal J_{n}^{(4)}) \rightarrow \mathcal{N}(0,\mathrm{Id}_{\R^2})
$$
in distribution as $n \rightarrow \infty$.  
\item[iii)] If $\lambda_n \rightarrow \infty$, $|x|^3 \star \nu < \infty$ and 
$
( \mathrm{\mathrm{sgn}}(x) \star \nu)^2 |x|^2 \star \nu + ( x \star \nu )^2 - 2 \mathrm{\mathrm{sgn}}(x) \star \nu |x|^2 \star \nu\neq 0,
$
we have
$$(\mathcal J_{n}^{(1)},\mathcal J_{n}^{(3)},\mathcal J_{n}^{(4)}) \rightarrow \mathcal{N}(0,\mathrm{Id}_{\R^3})$$
in distribution as $n \rightarrow \infty$.  
\item[iv)] If  $\beta_n/\lambda_n^2 \rightarrow 0$, the conditions of $\mathrm{iii)}$ and $ |x|^4 \star \nu < \infty$ hold together, we finally obtain
$$
\mathcal J_{n}  \rightarrow \mathcal{N}(0,\mathrm{Id}_{\R^4})
$$
in distribution as $n \rightarrow \infty$.
\end{enumerate}
\end{theorem}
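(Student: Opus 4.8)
The whole argument takes place on the event $\Omega_n$ of Proposition \ref{estimatorn}, which has probability tending to $1$ and on which $\widehat\lambda_n=N_1$ and each detected index $\mathcal{I}_n(q)$ is exactly the block carrying the true jump time $T_q$ (and, after intersecting with a further event of probability $1-O(\lambda_n^2\Delta_n)$, no jump lies in block $\mathcal{I}_n(q)+1$). On that event I would first write closed-form identities for the increments entering \eqref{construct estim beta}. Using $Z^{\beta_n}_t=\sum_{T_j\le t}\Delta X_{T_j}e^{-\beta_n(t-T_j)}$ and that $Z^{\beta_n}$ solves $dZ^{\beta_n}_t=-\beta_nZ^{\beta_n}_tdt+dJ_t$ with $J$ pure-jump, one gets, with $\theta_q:=\mathcal{I}_n(q)\Delta_n-T_q\in[0,\Delta_n)$ and $R_q:=\sum_{j<q}\Delta X_{T_j}e^{-\beta_n(\mathcal{I}_n(q)\Delta_n-T_j)}$ the accumulated relaxation of earlier spikes,
$$\Delta^n_{\mathcal{I}_n(q)+1}X=-\bigl(1-e^{-\beta_n\Delta_n}\bigr)Z^{\beta_n}_{\mathcal{I}_n(q)\Delta_n}+\Delta^n_{\mathcal{I}_n(q)+1}X^c,\qquad Z^{\beta_n}_{\mathcal{I}_n(q)\Delta_n}=\Delta X_{T_q}\,e^{-\beta_n\theta_q}+R_q,$$
and $\Delta^n_{\mathcal{I}_n(q)}X=\Delta X_{T_q}e^{-\beta_n\theta_q}+(\text{earlier-spike relaxation})+\Delta^n_{\mathcal{I}_n(q)}X^c$, so that $\mathrm{sgn}(\Delta^n_{\mathcal{I}_n(q)}X)=\mathrm{sgn}(\Delta X_{T_q})$ off a vanishing event. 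Since $\lambda_n\Delta_n\to0$, the rescaled offsets $\theta_q/\Delta_n$ are asymptotically i.i.d.\ uniform on $[0,1]$ and independent of the i.i.d.\ jump sizes $(\Delta X_{T_q})$ and of $W$; this yields $\E[e^{-m\beta_n\theta_q}]=\tfrac{1-e^{-m\beta_n\Delta_n}}{m\beta_n\Delta_n}$, the origin of all the $\tfrac{1-e^{-m\beta_n\Delta_n}}{m\beta_n\Delta_n}$ and $\tfrac{e^{\beta_n\Delta_n}-1}{\beta_n\Delta_n}$ factors, and of the LLN $\tfrac{\beta_n\Delta_n}{(1-e^{-\beta_n\Delta_n})\widehat\lambda_n}\sum_q(\Delta^n_{\mathcal{I}_n(q)}X)^m\to x^m\star\nu$ already mentioned after Proposition \ref{estimatorlambda}.

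Substituting into \eqref{construct estim beta}, the denominator satisfies $D_n:=\sum_q|\Delta^n_{\mathcal{I}_n(q)}X|=\tfrac{1-e^{-\beta_n\Delta_n}}{\beta_n\Delta_n}\widehat\lambda_n\,|x|\star\nu\,(1+o_\PP(1))$ up to a Brownian fluctuation of order $\mathcal V_n^{(4)}$. The numerator splits as $\sum_q\mathrm{sgn}(\Delta X_{T_q})\Delta^n_{\mathcal{I}_n(q)+1}X$ plus the correction $2\Delta_n\sum_q\mathrm{sgn}(\Delta X_{T_q})\sum_{j<q}\Delta^n_{\mathcal{I}_n(j)}X$: the leading piece $-(1-e^{-\beta_n\Delta_n})\sum_q|\Delta X_{T_q}|e^{-\beta_n\theta_q}$ is $-(1-e^{-\beta_n\Delta_n})$ times the leading part of $D_n$, so $1+N_n/D_n\approx e^{-\beta_n\Delta_n}$; the piece $-(1-e^{-\beta_n\Delta_n})\sum_q\mathrm{sgn}(\Delta X_{T_q})R_q$ is, by design, cancelled in expectation by the correction (the linear-in-$q$ term $2\Delta_n\sum_{j<q}\Delta X_{T_j}e^{-\beta_n\theta_j}$ matches $\E[(1-e^{-\beta_n\Delta_n})R_q]$ once summed over $q\in\{1,\dots,\widehat\lambda_n\}$), leaving a deterministic residual from the exponential-versus-linear discrepancy — this becomes $\mathcal M_n$ after dividing by $D_n$ and propagating through the log — together with its centred fluctuation of size $\mathcal V_n^{(2)}$, the $\min$ reflecting $\mathrm{Var}(Z^{\beta_n})\asymp\lambda_n/\beta_n$ versus the bounded mass the correction can remove; the Brownian term $\sum_q\mathrm{sgn}(\Delta X_{T_q})\Delta^n_{\mathcal{I}_n(q)+1}X^c$ is centred with conditional variance $\asymp\widehat\lambda_n\int_0^1\sigma_s^2ds\,\Delta_n$, producing $\mathcal V_n^{(3)}$, and the residual fluctuation of the jump-size functionals in numerator and denominator gives $\mathcal V_n^{(1)}$. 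Writing $1+N_n/D_n=e^{-\beta_n\Delta_n}(1+\epsilon_n)$ and $\frac{\widehat\beta_n-\beta_n}{\beta_n}=-\frac{1}{\beta_n\Delta_n}\log(1+\epsilon_n)=-\frac{\epsilon_n}{\beta_n\Delta_n}+O\!\bigl(\epsilon_n^2/(\beta_n\Delta_n)\bigr)$ — the source of the amplification factor $e^{\beta_n\Delta_n}/(\beta_n\Delta_n)$, hence of the $e^{\beta_n\Delta_n}$ prefactor in $\mathcal M_n,\mathcal V_n$ — and checking, under $\beta_n\sqrt{\lambda_n\Delta_n}\to\infty$ and Assumption \ref{betalambda}, that the $\max\{\cdot,\Delta_n\}$ truncation never binds and that the quadratic remainder is negligible, one obtains the identity $\frac{\widehat\beta_n-\beta_n}{\beta_n}=\mathcal M_n+\sum_{i=1}^4\mathcal V_n^{(i)}\mathcal J_n^{(i)}$ with $\mathcal J_n^{(i)}$ the corresponding normalized sums (any leftover absorbed into the $\mathcal J_n^{(1)}$ term); tightness of the $\mathcal J_n^{(i)}$ gives i), and the displayed rate bound follows by comparing $\mathcal M_n,\mathcal V_n^{(i)}$ with $\lambda_n\Delta_n$, $(\beta_n\sqrt{\lambda_n\Delta_n})^{-1}$ and $\min\{\beta_n^{-1/2},\lambda_n/\beta_n\}$.

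For ii)–iv) I would upgrade tightness to convergence by a conditioning argument in the spirit of \cite{ait14}: conditionally on the $\sigma$-field generated by the jump times and signs, $\mathcal J_n^{(3)},\mathcal J_n^{(4)}$ are normalized sums of independent Gaussian increments of $X^c$ over disjoint blocks, with deterministic limiting variances proportional to $\int_0^1\sigma_s^2ds$, so Lindeberg--Lyapunov (using $\widehat\lambda_n/\lambda_n\to1$ from Proposition \ref{estimatorlambda} and $\lambda_n\to\infty$) gives ii); $\mathcal J_n^{(1)}$ is a normalized sum of i.i.d.\ centred functions of the $\Delta X_{T_q}$, whose CLT needs $|x|^3\star\nu<\infty$ for Lyapunov and the stated non-degeneracy of its limiting variance, plus a covariance check that it is asymptotically uncorrelated with the Brownian components (independence of $W$ and the jumps), yielding iii); finally $\mathcal J_n^{(2)}$ is the fluctuation of $\sum_q\mathrm{sgn}(\Delta X_{T_q})\bigl((1-e^{-\beta_n\Delta_n})R_q-\text{correction}\bigr)$, which after exchanging summation order becomes a jump-time-weighted linear functional of the i.i.d.\ jump sizes with variance of order $\lambda_n^2/\beta_n$; its CLT requires enough effectively contributing jumps, which is exactly where $\beta_n/\lambda_n^2\to0$ and $|x|^4\star\nu<\infty$ enter (both for the Lindeberg condition of that sum and to make the remainder folded into $\mathcal J_n^{(1)}$ negligible at scale $\mathcal V_n^{(2)}$), and a last covariance computation shows $\mathcal J_n^{(1)},\dots,\mathcal J_n^{(4)}$ are asymptotically orthogonal, giving iv).

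\textbf{Main obstacle.} The genuinely hard part is two-fold. First, the $n$-dependent, unboundedly large mean-reversion drift $-\beta_n\int_0^\cdot Z^{\beta_n}_sds$ makes the off-the-shelf high-frequency limit theorems inapplicable (as already flagged for Proposition \ref{estimatorn}): each approximation — the reduction $\Delta^n_{\mathcal{I}_n(q)}X\approx\Delta X_{T_q}e^{-\beta_n\theta_q}$, the replacement of the empirical offset distribution by the uniform law, the $O(\epsilon_n^2)$ term after the logarithm — must be controlled by hand with bounds sharp enough to survive the $1/(\beta_n\Delta_n)$ amplification. Second, and more delicate probabilistically, is the joint central limit theorem: pinning down the four asymptotically orthogonal noise sources with the precise scalings $\mathcal V_n^{(i)}$, and in particular proving a CLT for the jump-time-weighted accumulation term $\mathcal J_n^{(2)}$ in the regime $\beta_n/\lambda_n^2\to0$, is where the bulk of the work lies.
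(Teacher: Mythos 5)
Your plan follows essentially the same route as the paper: on $\Omega_n$ (intersected with the sign-matching and no-close-jumps events) you reduce to the same closed-form increment identities, split the slope estimator into the leading $1-e^{-\beta_n\Delta_n}$ piece, the Brownian terms (giving $\mathcal V_n^{(3)},\mathcal V_n^{(4)}$), and the accumulated-relaxation term whose mean is cancelled by the $2\Delta_n\sum_{j<q}\Delta^n_{\mathcal I_n(j)}X$ correction up to the residual $\mathcal M_n$ (giving $\mathcal V_n^{(1)},\mathcal V_n^{(2)}$), then propagate through the logarithm with the $e^{\beta_n\Delta_n}/(\beta_n\Delta_n)$ amplification and prove the CLTs by Lindeberg conditioning on the jump configuration. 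The only organizational differences from the paper are that it first isolates an oracle estimator built from the true jump times and signs (with three approximation lemmas quantifying the $q\notin\mathcal E_n$ and substitution errors, rather than discarding them on a vanishing event) and that it invokes a Peccati-type CLT for the double compensated-Poisson integral behind $\mathcal J_n^{(1)},\mathcal J_n^{(2)}$ where you propose a direct Lindeberg argument; neither changes the substance.
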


Some remarks on the different error terms: {\bf 1)} the term of order $1/(\beta_n \sqrt{\lambda_n \Delta_n})$ 
accounts for the presence of a Brownian motion in the term $(X^c_t)_{t \geq 0}$. When $\lambda_n$ is bounded, we need $\beta_n \sqrt{\Delta_n} \rightarrow \infty$ or equivalently $\sqrt{\Delta_n} = o\left(\beta_n \Delta_n\right)$: the size of the slope of $(Z^{\beta}_t)_{t \geq 0}$ after a jump needs to dominate the Brownian motion part which is of order $\sqrt{\Delta_n}$. In the case where $\lambda_n \rightarrow \infty$, we can average the error due to the Brownian martingale part and then diminish the order of the error. In that case, we do not need the restriction $\sqrt{\Delta_n} = o\left(\beta_n \Delta_n\right)$ anymore but rather $\sqrt{\Delta_n/\lambda_n} = o\left(\beta_n \Delta_n\right)$.  {\bf 2)}
The error terms of order $\min\{\frac{1}{\sqrt{\beta_n}}, \frac{\lambda_n}{\beta_n}\}$, $\frac{\sqrt{\lambda_n}}{\beta_n}$ and $\lambda_n \Delta_n$ account for the jumps that occur before the observation increment used to estimate the slope of the process. {\bf 3)} The term 
$2\Delta_n\sum_{j=1}^{q-1} \Delta^n_{\mathcal{I}_n(j)} X$ introduced in the definition of $\widehat \beta_n$ in \eqref{construct estim beta}
is a bias correction that enables us to obtain a consistent estimator in the case $\lambda_n/\beta_n \approx 1$.

%
%
%
%

\section{Practical implementation}

\label{sec:practical implementation}

\subsection{Choice of the threshold $v_n$} The method to detect the jumps is based on the classical use of the threshold which is proportional to $\Delta_n^{-\varpi}$. As for  the choice of the threshold and $\varpi$, no exact method is provided in the literature. However, the threshold is recommended to be chosen of the form $v_n=C \hat{\sigma} \Delta_n^{-\varpi}$  in \cite[Section 5.3]{ait09} and \cite[Section 6.2.2, p. 187]{ait14} where $C$ is a constant and $\hat{\sigma}$ is an estimator of the integrated volatility, defined by  $(\int_0^1 \sigma_s^2 ds)^{1/2}$ (actually, the square root of the integrated square volatility). A popular rule-of-thumb consists in picking $\varpi$ close to $0$. Moreover, \cite{ait09} suggests to choose $C$ between 3 and 5. 

\medskip
It remains to find an estimator of the integrated volatility $(\int_0^1 \sigma_s^2ds)^{1/2}$. A natural choice is the multipower variation estimator defined by 
\[
\hat{\sigma} = \sqrt{\Big(\frac{\Gamma(\frac{1}{2})}{2^{\frac{1}{k}}\Gamma(\frac{1}{2} +\frac{1}{k})}\Big)^{k} \sum_{i=1}^{n-k} |\Delta_i^n X \Delta_{i+1}^n X \ldots \Delta_{i+k}^n X|^{\frac{2}{k}} }
\] with $k$ the order of the estimator, see \cite{barndorff06} and \cite{veraart10} for more details. The order of the multipower variation estimator is set to 20 in the practical applications in this paper, which is high compared to the orders typically chosen in the literature. This choice is justified by the strong mean reversion of the spikes. As for the jumps, spikes have large increments that need to be compensated in the multipower variation estimator and can be present during two or three time steps. We compensate these large increments with a higher order of the multipower variation. Some simulations on simple models show that an order of 20 looks reasonable.

\subsection{Numerical illustration} \label{simulation subsection}
In this section, we study the performances of our estimation procedures on simulated data of the process defined by \eqref{our model first part}-\eqref{our model second part}-\eqref{our model third part}. We tested a wide range of values for $(\lambda_n,\beta_n)$ in order to illustrate the results of Theorem \ref{estimatorcj}. To be consistent with real data, the process is simulated with a step time $\Delta_n = 10^{-4}$, which is the order of magnitude corresponding to one year of hourly data observations. We pick
\begin{equation} \label{processsimu}
dX^c_t = X^c_t\big((2 - 100\log(X^c_t))dt + 2dW_t\big)
\end{equation}
corresponding to the exponential of an Ornstein-Uhlenbeck process with the mean reverting parameter equal to 100 and volatility parameter equal to 2. The sizes of the jumps follow the law $0.4\left(-\mathcal{E}\left(15\right)\right) + 0.6\mathcal{E}\left(10\right)$, with $\mathcal E(\rho)$ denoting the exponential distribution with parameter $\rho>0$. Figure \ref{simuX} illustrates a sample path of the process for different parameters $\lambda_n$ and $\beta_n$. We realize 10000 simulations. We use jump detection using Algorithm (I), corresponding only to the use of a threshold,  as well as Algorithm (II), corresponding to filtering the previous jumps by keeping only increments that have successive opposite signs, respectively working theoretically in the case of \ref{assumpestimatorn}\,(I) and \ref{assumpestimatorn}\,(II).

Table \ref{tab_simuC3}, \ref{tab_simuC4} and \ref{tab_simuC5} show the different results of estimated $(\widehat{\lambda}_n,\widehat{\beta}_n)$ using a threshold equal to $C \hat{\sigma}\Delta_n^{-0.01}$, for $C = 3, \; 4, \; 5$. As expected, for large $\beta_n$, the estimation on filtered jumps gives satisfactory results of $\widehat{\beta}_n$, whatever the value of $\lambda_n$ satisfying the different assumptions, and seems to slightly underestimate $\lambda_n$, whereas the estimation under Algorithm (I) gives bad results on $\widehat{\beta}_n$ and overestimates $\lambda_n$. 

The results for $\beta_n=20$ highlights, as expected, the need in Assumption \ref{assumpestimatorn}\,(II) to have a small number of jumps ($\lambda_n=10$) to get satisfactory results.  
Because of the expected $\beta_n$ on real data, we will focus, in the next section, on the estimation procedure on filtered jumps ({\it i.e.} using Algorithm (II). This choice is also justified by the fact that underestimating the numbers of spikes seems to have a lower impact on the quality of $\widehat{\beta}_n$ than overestimating them. One also observes that the choice of $C$ has an impact on the estimation of $\lambda_n$: the threshold to select jumps increases with $C$ and then the estimated $\lambda_n$ which is the number of jumps decreases. On the contrary, it has a low impact on the estimation of $\beta_n$, except in the case $C = 3$ for small $\lambda_n$ where the number of spikes is overestimated, leading to a bad estimator for $\beta_n$. Finally, the impact of the order of the multipower estimator of the volatility is illustrated in Figure \ref{impactmultipower}. The impact is negligible from an order of $20$, where the estimators mean of $\lambda_n$ and $\beta_n$ remain constant and give a good estimation. Before 20, it has only an impact on the estimation of $\lambda_n$: the spike effect has not disappeared in the estimator and $\hat{\sigma}$ is overestimated, inducing less detected spikes and an underestimation for $\lambda_n$. This supports the choice of a high order for the multipower estimator.

\begin{table}[h!]
	\centering
	\begin{tabular}{|c|c|c|c|c|c|c|c|c|}
		\hline
		\multirow{3}{*}{$\left(\lambda_n, \beta_n\right)$}	&
		\multicolumn{4}{c|}{Algorithm (I)} & \multicolumn{4}{c|}{Algorithm (II)} \\
		\cline{2-9}
		 &\multicolumn{2}{c|}{$\widehat{\lambda}_n$} &\multicolumn{2}{c|}{$\widehat{\beta}_n$} &
		\multicolumn{2}{c|}{$\widehat{\lambda}_n$} &\multicolumn{2}{c|}{$\widehat{\beta}_n$} \\
		\cline{2-9}
		& Mean & C. I.   & Mean & C. I. 
		& Mean & C. I.   & Mean & C. I.  \\
		\hline
     $(10, 2)$ &   27.0 &    $\left[18, 36\right]$ &     4.4 &       $\left[-16.6, 25.8\right]$ &  13.3 &   $\left[8, 20\right]$ &     86.5 &       $\left[13.6, 179.6\right]$ \\
    $(10, 20)$ &   21.3 &    $\left[11, 35\right]$ &   -31.6 &      $\left[-152.2, 23.7\right]$ &   8.9 &   $\left[5, 14\right]$ &     37.1 &        $\left[18.1, 66.8\right]$ \\
   $(10, 200)$ &   95.1 &   $\left[70, 117\right]$ &  -592.6 &     $\left[-1839.0, 28.7\right]$ &   9.4 &   $\left[5, 15\right]$ &    204.5 &      $\left[186.5, 225.7\right]$ \\ 
   \hline
  $(10, 2000)$ &   94.1 &   $\left[54, 137\right]$ & -4430.1 &  $\left[-5860.2, -2271.0\right]$ &   9.7 &   $\left[5, 15\right]$ &   2003.0 &    $\left[1978.9, 2026.2\right]$ \\
 $(10, 20000)$ &   45.7 &    $\left[31, 62\right]$ &  -727.9 &    $\left[-1262.9, -77.7\right]$ &  18.0 &  $\left[12, 25\right]$ &  19152.0 &  $\left[17840.8, 19845.5\right]$ \\
     $(75, 2)$ &   79.6 &    $\left[66, 94\right]$ &     2.2 &         $\left[-2.3, 6.6\right]$ &  42.1 &  $\left[32, 53\right]$ &     42.5 &       $\left[-3.5, 232.1\right]$ \\
  \hline
    $(75, 20)$ &   73.0 &    $\left[59, 88\right]$ &    19.7 &        $\left[12.4, 27.8\right]$ &  49.5 &  $\left[39, 60\right]$ &     50.6 &       $\left[24.4, 158.9\right]$ \\
   $(75, 200)$ &  106.7 &   $\left[82, 132\right]$ &   147.0 &       $\left[94.5, 190.7\right]$ &  63.3 &  $\left[52, 75\right]$ &    226.2 &      $\left[198.8, 289.4\right]$ \\ 
  $(75, 2000)$ &  351.3 &  $\left[290, 411\right]$ & -2483.0 &  $\left[-3471.0, -1566.8\right]$ &  69.4 &  $\left[56, 83\right]$ &   2021.5 &    $\left[1959.7, 2116.0\right]$ \\
\hline
 $(75, 20000)$ &  229.2 &  $\left[189, 271\right]$ & -1158.3 &  $\left[-1291.5, -1016.8\right]$ &  80.4 &  $\left[66, 96\right]$ &  19717.3 &  $\left[18745.8, 20257.2\right]$ \\ \hline
	\end{tabular}
	
	\caption{\label{tab_simuC3} \it Performance of $(\widehat{\lambda}_n,\widehat{\beta}_n)$ for different values of $\left(\lambda_n,\beta_n\right)$ using Algorithms (I) and (II), with continuous part defined by \eqref{processsimu} and jump size distribution  $0.4\left(-\mathcal{E}\left(15\right)\right) + 0.6\mathcal{E}\left(10\right)$. The threshold $v_n$ is chosen equal to $3\hat{\sigma}\Delta_n^{-0.01}$ with $\hat{\sigma}$ is the multi-power variation estimator of order 20. The means and quantile intervals 5\%-95\% are computed with $10^4$ Monte-Carlo simulations and $\Delta_n = 10^{-4}$.} 
\end{table}

\begin{table}[h!]
	\centering
	\begin{tabular}{|c|c|c|c|c|c|c|c|c|}
		\hline
		\multirow{3}{*}{$\left(\lambda_n, \beta_n\right)$}	&
		\multicolumn{4}{c|}{Algorithm (I)} & \multicolumn{4}{c|}{Algorithm (II)} \\
		\cline{2-9}
		 &\multicolumn{2}{c|}{$\widehat{\lambda}_n$} &\multicolumn{2}{c|}{$\widehat{\beta}_n$} &
		\multicolumn{2}{c|}{$\widehat{\lambda}_n$} &\multicolumn{2}{c|}{$\widehat{\beta}_n$} \\
		\cline{2-9}
		& Mean & C. I.   & Mean & C. I. 
		& Mean & C. I.   & Mean & C. I.  \\
		\hline
     $(10, 2)$ &   10.6 &     $\left[6, 16\right]$ &     2.0 &        $\left[-9.7, 13.7\right]$ &   5.8 &   $\left[2, 10\right]$ &     29.7 &         $\left[4.6, 46.4\right]$ \\
    $(10, 20)$ &   10.2 &     $\left[5, 16\right]$ &    18.4 &         $\left[2.4, 32.4\right]$ &   7.6 &   $\left[4, 12\right]$ &     29.0 &        $\left[16.8, 41.3\right]$ \\
   $(10, 200)$ &   54.8 &    $\left[39, 69\right]$ &  -256.6 &     $\left[-996.6, 114.1\right]$ &   9.3 &   $\left[5, 14\right]$ &    203.8 &      $\left[186.5, 224.9\right]$ \\ \hline
  $(10, 2000)$ &   81.5 &   $\left[47, 118\right]$ & -4216.3 &  $\left[-5908.7, -1902.0\right]$ &   9.6 &   $\left[5, 15\right]$ &   2002.8 &    $\left[1978.9, 2025.7\right]$ \\
 $(10, 20000)$ &   30.9 &    $\left[16, 48\right]$ & -1151.6 &   $\left[-1528.6, -742.1\right]$ &  10.8 &   $\left[5, 17\right]$ &  19778.4 &  $\left[19185.2, 20168.1\right]$ \\
     $(75, 2)$ &   74.2 &    $\left[61, 89\right]$ &     2.0 &         $\left[-2.4, 6.2\right]$ &  40.2 &  $\left[30, 51\right]$ &     40.5 &       $\left[-3.8, 221.3\right]$ \\
  \hline
    $(75, 20)$ &   72.3 &    $\left[59, 87\right]$ &    19.8 &        $\left[12.8, 27.8\right]$ &  49.2 &  $\left[39, 60\right]$ &     50.4 &       $\left[24.4, 158.9\right]$ \\
   $(75, 200)$ &   77.3 &    $\left[61, 97\right]$ &   178.8 &      $\left[139.1, 212.2\right]$ &  62.1 &  $\left[51, 74\right]$ &    225.4 &      $\left[198.2, 287.8\right]$ \\ 
  $(75, 2000)$ &  281.7 &  $\left[231, 330\right]$ & -1888.3 &  $\left[-2843.5, -1004.8\right]$ &  67.4 &  $\left[55, 81\right]$ &   2020.8 &    $\left[1959.3, 2114.6\right]$ \\
  \hline
 $(75, 20000)$ &  216.7 &  $\left[177, 257\right]$ & -1200.1 &  $\left[-1307.8, -1085.3\right]$ &  76.6 &  $\left[62, 91\right]$ &  19760.4 &  $\left[18778.1, 20302.9\right]$ \\
		\hline
	\end{tabular}
	\caption{\label{tab_simuC4} \it Performance of $(\widehat{\lambda}_n,\widehat{\beta}_n)$ for different values of $\left(\lambda_n,\beta_n\right)$ using Algorithmes (I) and (II), with continuous part defined by \eqref{processsimu} and jump size distribution  $0.4\left(-\mathcal{E}\left(15\right)\right) + 0.6\mathcal{E}\left(10\right)$. The threshold $v_n$ is chosen equal to $4\hat{\sigma}\Delta_n^{-0.01}$ with $\hat{\sigma}$ is the multi-power variation estimator of order 20. The means and quantile intervals 5\%-95\% are computed with $10^4$ Monte-Carlo simulations and $\Delta_n = 10^{-4}$.} 
\end{table}

\begin{table}[h!]
	\centering
	\begin{tabular}{|c|c|c|c|c|c|c|c|c|}
		\hline
		\multirow{3}{*}{$\left(\lambda_n, \beta_n\right)$}	&
		\multicolumn{4}{c|}{Algorithm (I)} & \multicolumn{4}{c|}{Algorithm (II)} \\
		\cline{2-9}
		 &\multicolumn{2}{c|}{$\widehat{\lambda}_n$} &\multicolumn{2}{c|}{$\widehat{\beta}_n$} &
		\multicolumn{2}{c|}{$\widehat{\lambda}_n$} &\multicolumn{2}{c|}{$\widehat{\beta}_n$} \\
		\cline{2-9}
		& Mean & C. I.   & Mean & C. I. 
		& Mean & C. I.   & Mean & C. I.  \\
		\hline
		$(10,2)$ & $9.95$  & $\left[5,15\right]$ & 2.01 & $\left[-9.21,12.85\right]$ &	 $5.45$  & $\left[2,9\right]$ & 19.5 & $\left[4.26,39.4\right]$ 	\\
		$(10,20)$ & $9.9$  & $\left[5,15\right]$ & 19.84 & $\left[7.6,32.4\right]$ &	 $7.5$  & $\left[3,12\right]$ & 28.8 & $\left[16.9,40.2\right]$ 	\\
		$(10,200)$ &  $33.4$  & $\left[17,47\right]$ & -76.7 & $\left[-540,158\right]$ 	& $9.3$  & $\left[5,14\right]$ & 204 & $\left[188,224\right]$ 	\\
		\hline
		$(10,2000)$ & $72.5$  & $\left[42,105\right]$ & -3968 & $\left[-5934,-1633\right]$ &	 $9.6$  & $\left[5,15\right]$ & 2002 & $\left[1979,2023\right]$ 	\\
		$(10,20000)$ & $29.2$  & $\left[15,46\right]$ & -1207 & $\left[-1508,-871\right]$ 	& $10.2$  & $\left[5,16\right]$ & 19861 & $\left[19337,20207\right]$ 	\\
		$(75,2)$ & $73.7$  & $\left[60,88\right]$ & 2 & $\left[-2.46,6.17\right]$ & $39.9$  & $\left[30,51\right]$ & 39.2 & $\left[-4,204\right]$ 	\\
		\hline
		$(75,20)$ & $71.9$  & $\left[59,86\right]$ & 19.8 & $\left[12.9,27.6\right]$ & $49.9$  & $\left[39,60\right]$ & 49.9 & $\left[24.5,155.6\right]$ 	\\
		$(75,200)$ & $68.7$  & $\left[55,83\right]$ & 191 & $\left[157,219\right]$ & $61$  & $\left[50,73\right]$ & 225 & $\left[198,291\right]$ 	\\
		$(75,2000)$ & $234$  & $\left[192,273\right]$ & -1403 & $\left[-2334,-556\right]$ & $65.6$  & $\left[54,78\right]$ & 2019 & $\left[1958,2109\right]$ 	\\
		\hline
		$(75,20000)$ & $207$  & $\left[170,247\right]$ & -1216 & $\left[-1308,-1113\right]$  & $74.2$  & $\left[60,89\right]$ & 19785 & $\left[18790,20310\right]$ 	\\
		\hline
	\end{tabular}
	\caption{\label{tab_simuC5} \it Performance of $(\widehat{\lambda}_n,\widehat{\beta}_n)$ for different values of $\left(\lambda_n,\beta_n\right)$ using Algorithmes (I) and (II), with continuous part defined by \eqref{processsimu} and jump size distribution  $0.4\left(-\mathcal{E}\left(15\right)\right) + 0.6\mathcal{E}\left(10\right)$. The threshold $v_n$ is chosen equal to $5\hat{\sigma}\Delta_n^{-0.01}$ with $\hat{\sigma}$ is the multi-power variation estimator of order 20. The means and quantile intervals 5\%-95\% are computed with $10^4$ Monte-Carlo simulations and $\Delta_n = 10^{-4}$.} 
\end{table}

\begin{figure}[h!]
    \centering
    \begin{subfigure}[b]{0.4\textwidth}
        \centering
       \includegraphics[width=\textwidth]{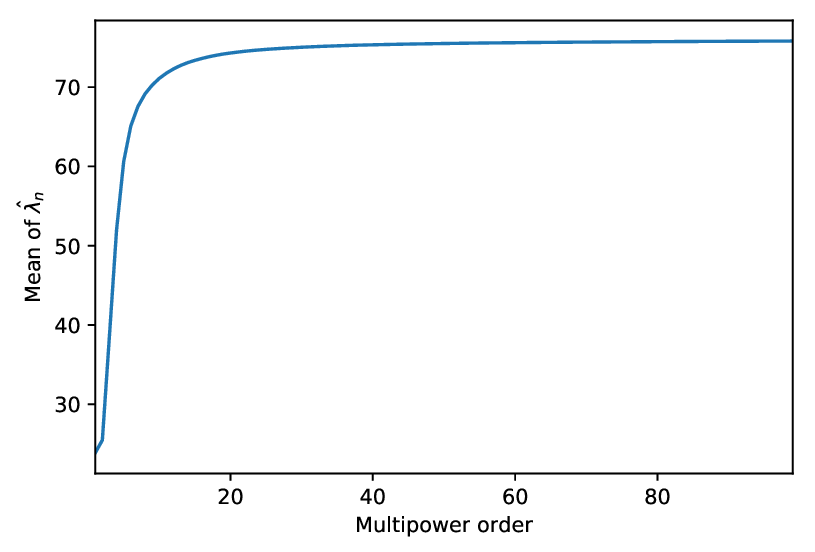}
        \caption{\it Estimation of $\lambda_n$.}
    \end{subfigure}
    \begin{subfigure}[b]{0.4\textwidth}
        \centering
       \includegraphics[width=\textwidth]{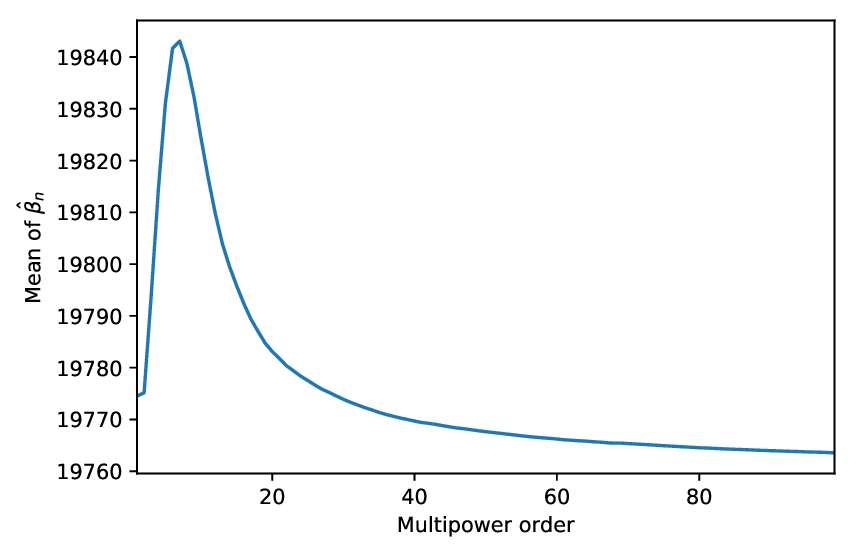}
        \caption{\it Estimation of $\beta_n$.}
    \end{subfigure}
      
     \caption{\it \label{impactmultipower} Estimation of $(\widehat{\lambda}_n,\widehat{\beta}_n)$ for different values of the order of the multipower estimator of the volatility $\hat{\sigma}$ using Algorithm (II), with continuous part defined by \eqref{processsimu}, $\lambda_n = 75$, $\beta_n = 20000$ and jump size distribution  $0.4\left(-\mathcal{E}\left(15\right)\right) + 0.6\mathcal{E}\left(10\right)$. The threshold $v_n$ is chosen equal to $5\hat{\sigma}\Delta_n^{-0.01}$. The mean of the estimators is computed with $10^4$ Monte-Carlo simulations and $\Delta_n = 10^{-4}$.}
\end{figure}

\subsection{Practical implementation on real data} \label{practical on real data}
Electricity spot historical data do exhibit spikes with strong mean reversion, see Figure \ref{spotdata}. We expect to obtain relatively high values for $\beta_n$, a necessary condition in order to apply our estimation procedure, especially under Assumption \ref{assumpestimatorn}\,(II). The goal is then to estimate the parameters $\lambda_n$ and $\beta_n$ of the process $Z^{\beta_n}$ on a time series of spot prices, assuming that the spot price is the sum of a continuous semimartingale and a spike process. We dispose of the following data: 
\begin{enumerate}
\item French electricity EPEX spot prices between the first of January of 2015 (included) and the first of January 2017 (not included) with data each hour \footnote{\label{epex}Source: \href{https://www.epexspot.com/}{https://www.epexspot.com/}},
\item German electricity EPEX spot prices between the first of January of 2015 (included) and the first of January 2017 (not included) with data each hour \footref{epex},
\item Australian electricity spot prices in Queensland between the first of January of 2015 (included) and the first of January 2017 (not included) with data each 30 minutes \footnote{Source: \href{https://www.aemo.com.au/}{https://www.aemo.com.au/}}.
\end{enumerate}
We estimate those parameters using a threshold $v_n = C \hat{\sigma} \Delta_n^{-0.01}$, with $\hat{\sigma}$ the multi-power variation of order 20 and $C$ a constant set to 3, 4 or 5. Results are presented in Table \ref{dataresults}. Equivalent half-life time in $hours^{-1}$, $\frac{\log(2)}{\hat{\beta}_n \Delta_n}$, and intensity $\frac{\hat{\lambda}_n}{2}$ in years$^{-1}$ are given Table \ref{dataresultsequivalent}. Figure \ref{spotdata} gives the time series of these three sets of data with jumps time estimated in the case $C = 5$. \\
As expected, the estimated $\widehat{\lambda}_n$ is sensitive to the value of $C$, the number of detected jumps decreasing with $C$. The estimated $\widehat{\beta}_n$ is much less sensitive, although we can observe a slight increase of values with $C$ in the French and German markets. We will see in section \ref{sec_call_option} the sensitivity of these estimators to the value of a strip of Call options.

%

 \begin{table}[h]
 	\centering
 	\begin{tabular}{|c|c|c|c|}
 		\hline
 		Market  & $C = 3$ & $C = 4$ & $C = 5$ \\
 		\hline
 		French  & $\left(100,19170 \right)$   & $\left(51,20259\right)$  &   $\left(35,21043\right)$ \\
 		German  &  $\left(145, 9848\right)$   & $\left(62,13438\right)$  &   $\left(34, 14531\right)$ \\
 		Australian  &    $\left(337, 22897\right)$   & $\left(227,22883\right)$  &   $\left(177,22884\right)$ \\
 		\hline
 	\end{tabular}
 	\caption{\label{dataresults}\it Estimation of $\left(\lambda_n, \beta_n\right)$ for different markets using a threshold of the form $v_n=C \hat{\sigma}\Delta_n^{-0.01}$ where $\hat{\sigma}$ is the multi-power variation estimator of order 20 and $C$ takes different values.} 
 \end{table}

 \begin{table}[h]
 	\centering
 	\begin{tabular}{|c|c|c|c|}
 		\hline
 		Market  & $C = 3$ & $C = 4$ & $C = 5$ \\
 		\hline
 		French  & $\left(50,0.63 \right)$   & $\left(25.5,0.60\right)$  &   $\left(17.5,0.58\right)$ \\
 		German  &  $\left(72.5, 1.23\right)$   & $\left(31,0.90\right)$  &   $\left(17, 0.83\right)$ \\
 		Australian  &    $\left(168.5, 1.06\right)$   & $\left(113.5,1.06\right)$  &   $\left(88.5,1.06\right)$ \\
 		\hline
 	\end{tabular}
 	\caption{\label{dataresultsequivalent}\it Estimation of intensity $\frac{\hat{\lambda}_n}{2}$ in years$^{-1}$ (first component)  and half-life time in $hours^{-1}$, $\frac{\log(2)}{\hat{\beta}_n \Delta_n}$ (second component), for different markets using a threshold of the form $v_n=C \hat{\sigma}\Delta_n^{-0.01}$ where $\hat{\sigma}$ is the multi-power variation estimator of order 20 and $C$ takes different values.} 
 \end{table}

\bigskip

In the previous analysis, one neglects the intraday seasonality which can be very strong and induce spikes detection, see Figure \ref{fig:intraday}. Indeed, the spot price is not an hourly process but a daily process. Spot fixing is done every day and the 24 prices for next day are settled simultaneously. However, representing this 24-dimensional stochastic process is very difficult and more importantly not suitable for risk management. Table \ref{tab:dailyestimation} contains the estimation results on the different data set with data sampled daily for each hour and for the daily average spot price using $C = 4$ for the threshold parameter (intensity $\frac{\hat{\lambda}_n}{2}$ in years$^{-1}$ and half-life time in days$^{-1}$, $\frac{\log(2)}{\hat{\beta}_n \Delta_n}$, are given). Our estimation procedure provides an estimation for jump times, jump sizes, and for the parameter $\beta$. However, the number of identified spike being low, the estimation procedure can not provide a robust estimation of $\lambda$. Figure \ref{fig:estimationdaily} represents the spike estimation for hours 0 and 18. Some spikes, for instance during January 2016 and hour 21, are not detected: they are not a spike in our modelling because for the spot to be high, several time steps are needed and it is not seen as a discontinuity.

\begin{figure}[h!]
    \centering
        \includegraphics[width=0.5\textwidth]{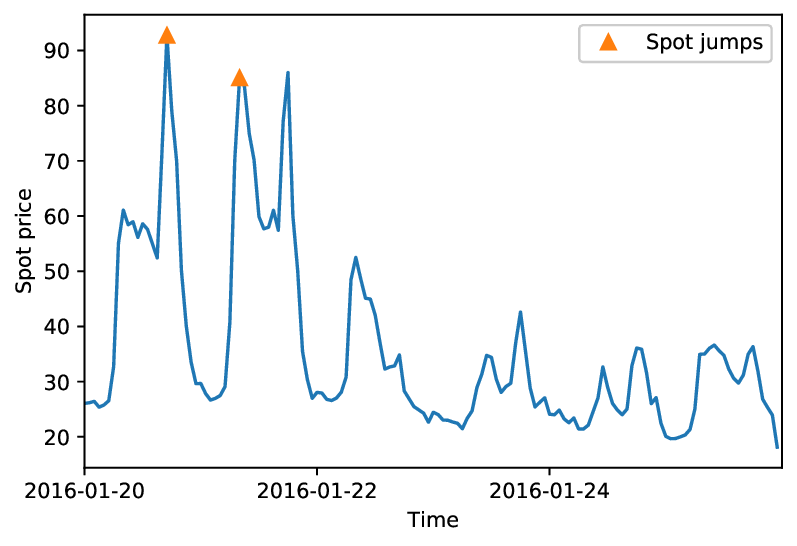}

    \caption{\it \label{fig:intraday} German hourly spot price between 2016-01-20 and 2016-01-25.}
\end{figure}

\begin{table}
\centering
\begin{tabular}{lrlrlrr}
\toprule
Country & \multicolumn{2}{l}{French} & \multicolumn{2}{l}{German} & \multicolumn{2}{l}{Australian} \\
Hour / Parameters & $\frac{\hat{\lambda}_n}{2}$ & $\frac{\log(2)}{\hat{\beta}_n \Delta_n}$ & $\frac{\hat{\lambda}_n}{2}$ & $\frac{\log(2)}{\hat{\beta}_n \Delta_n}$ & $\frac{\hat{\lambda}_n}{2}$ & $\frac{\log(2)}{\hat{\beta}_n \Delta_n}$ \\
\midrule
Day &               1.0 &           2.07 &               0.5 &                &              15.5 &           0.46 \\
\hline
0   &               1.0 &           1.35 &               8.5 &           1.23 &               5.0 &           1.76 \\
1   &               1.5 &           1.67 &               9.5 &           1.23 &               7.5 &           0.55 \\
2   &               0.5 &           2.67 &               8.5 &           0.65 &               4.5 &           0.82 \\
3   &               0.5 &           1.94 &               8.5 &           1.02 &               4.0 &           0.55 \\
\hline
4   &               1.0 &           3.73 &               5.5 &           0.78 &               4.5 &           0.46 \\
5   &               1.5 &           4.39 &               5.0 &           1.06 &               3.0 &           3.65 \\
6   &               0.0 &                &               2.5 &            0.8 &              11.0 &           0.85 \\
\hline
7   &               1.0 &                &               1.5 &           0.85 &              16.0 &           0.86 \\
8   &               1.5 &           0.72 &               0.5 &                &               7.5 &           0.59 \\
9   &               0.0 &                &               0.0 &                &              10.5 &           0.94 \\
\hline
10  &               1.0 &            2.1 &               0.0 &                &              13.0 &           0.87 \\
11  &               0.5 &            4.2 &               0.0 &                &               9.5 &           0.59 \\
12  &               1.5 &           4.02 &               2.0 &           0.84 &              14.0 &           1.13 \\
\hline
13  &               2.0 &           1.15 &               3.0 &            0.8 &              12.5 &           0.62 \\
14  &               1.5 &           1.87 &               4.0 &           0.46 &              14.0 &           0.54 \\
15  &               1.5 &           2.29 &               2.5 &           0.43 &              12.5 &           0.86 \\
\hline
16  &               1.0 &           2.24 &               2.0 &           0.44 &               9.0 &           1.02 \\
17  &               1.5 &           1.85 &               2.5 &           4.46 &              10.5 &           1.55 \\
18  &               3.5 &           1.27 &               2.5 &           1.85 &              12.5 &           3.04 \\
\hline
19  &               3.5 &           1.36 &               1.0 &           7.14 &              11.0 &           1.14 \\
20  &               1.5 &           1.39 &               1.0 &           0.65 &              10.5 &           2.88 \\
21  &               1.5 &           5.08 &               1.5 &           0.55 &               7.0 &           0.30 \\
\hline
22  &               2.0 &           0.96 &               2.0 &           0.71 &               5.0 &           1.05 \\
23  &               1.0 &           0.52 &               4.0 &           0.87 &              15.0 &           3.11 \\
\bottomrule
\end{tabular}
\caption{\it \label{tab:dailyestimation} Estimation of $\left(\lambda_n, \beta_n\right)$ on daily data for the different hours of the day (hour 0 to 23) and for daily average price (Day) using a threshold of the form $v_n=4 \hat{\sigma}\Delta_n^{-0.01}$ where $\hat{\sigma}$ is the multi-power variation estimator of order 20. Intensity $\frac{\hat{\lambda}_n}{2}$ in years$^{-1}$ and half-life time in $days^{-1}$, $\frac{\log(2)}{\hat{\beta}_n \Delta_n}$, are given. Half-life time values are missing when there are no jumps or when the left hand part in the maximum term of \eqref{construct estim beta} is less than 0.}
\end{table}

\begin{figure}[h!]
    \centering
    \begin{subfigure}[b]{0.49\textwidth}
        \centering
        \includegraphics[width=\textwidth]{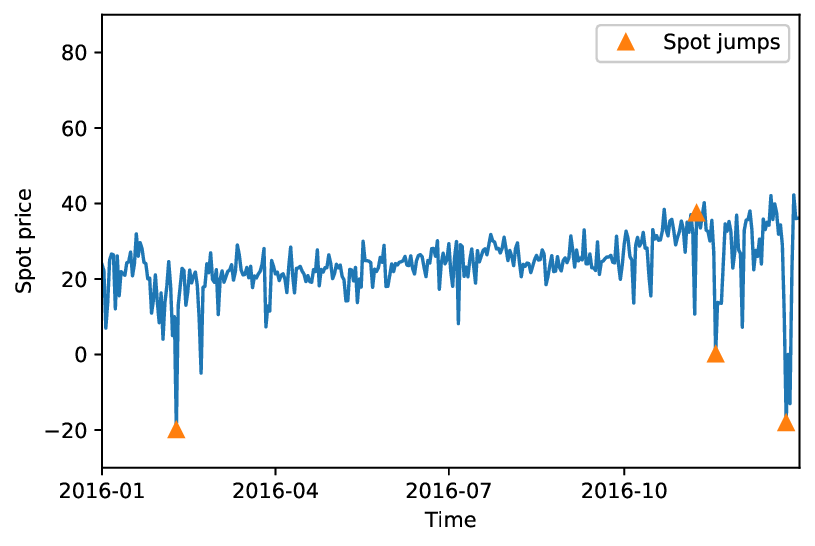}
        \caption{\it Hour 0.}
    \end{subfigure}
    \begin{subfigure}[b]{0.49\textwidth}
        \centering
        \includegraphics[width=\textwidth]{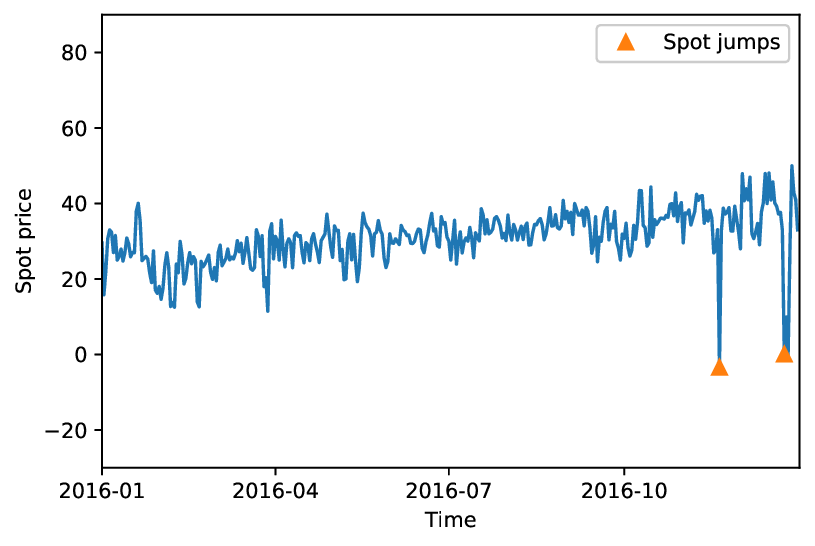}
        \caption{\it Hour 21.}
    \end{subfigure}
    \caption{\it \label{fig:estimationdaily} Spike detection on daily German data during year 2016 for hour 0 and hour 21 using a threshold of the form $v_n=4 \hat{\sigma}\Delta_n^{-0.01}$ where $\hat{\sigma}$ is the multi-power variation estimator of order 20.}
\end{figure}

\section{Derivative pricing in the electricity market}
\label{application}
\subsection{Forward price formula}
Due to the non-storability of electricity, the spot price is not an asset. The modelling of (and then an analytical formula for) the forward prices (i.e. the real assets and hedging products) is essential for risk management purposes. 
The question of the choice of the risk-neutral probability is addressed in the next section. Here, we consider that the electricity spot price $(S_t)_{t \geq 0}$ (respectively the logarithm of the spot price $(S_t)_{t \geq 0}$)  is modelled by $S_t = X_t^c + Z_t^{\beta}$ (respectively $\log(S_t) = X_t^c + Z_t^{\beta}$), according to \eqref{our model first part}-\eqref{our model second part}-\eqref{our model third part} under a risk-neutral probability $\mathbb{Q}$.  \\

The forward price $f(t,T)$ quoted at time $t$, delivering 1MWh at time $T$, is defined by:
\[f\left(t,T\right) = \mathbb{E}^{\mathbb{Q}}\big[S_T \,|\, \mathcal{F}_t \big].\]
The available contracts in the electricity markets are of the form $f(t,T,\theta)$: a contract that delivers 1MWh continuously from $T$ to $T+\theta$. The delivery period $\theta$ can be one week, one month, one year and so on. For example, the contract called "one-week-ahead" (1WAH) will deliver 1MWh continuously from the first hour of next Monday to the last hour of the following Sunday; the contract called "one-month-ahead" (1MAH) will deliver 1MWh continuously between the first and the last hour of next month. By classical no arbitrage arguments \cite{benth07} the price of such a product is defined by: 
\[f\left(t,T,\theta\right) = \frac{1}{\theta} \int_{T}^{T+\theta} f\left(t,u\right) du.\]
Theorems \ref{forward} and \ref{forwardlog} give analytic formulae for the forward price $f\left(t,T\right)$ respectively for the modelling of the spot price and the modelling of the logarithm of the spot price by \eqref{our model first part}-\eqref{our model second part}-\eqref{our model third part}. 
\begin{theorem} \label{forward} Suppose that the spot price is modelled by
$S_t = X_t^c + Z_t^{\beta},\;\;t\geq 0$,
according to \eqref{our model first part}-\eqref{our model second part}-\eqref{our model third part}
under a risk-neutral probability $\mathbb{Q}$. 
\begin{enumerate}
\item We have an explicit representation of
$f\left(t,T\right) = \mathbb{E}^{\mathbb{Q}}\big[ S_T\,\big|\, \mathcal{F}_t \big]$ given by 
\[f\left(t,T\right) = f^c\left(t,T\right) + f^{\beta}\left(t,T\right),\]
with 
\[f^c\left(t,T\right) = \mathbb{E}^{\mathbb{Q}}\big[X^c_T \,\big|\, \mathcal{F}_t \big]\]
and
\[f^{\beta}\left(t,T\right) = e^{-\beta \left(T-t\right)}Z_t^{\beta} + \frac{\lambda  x \star \nu }{\beta}\big(1-e^{-\beta\left(T-t\right)}\big).\]
\item We also have  $f\left(t,T,\theta\right) = f^c\left(t,T,\theta\right) + f^{\beta}\left(t,T,\theta\right)$, with
$f^c\left(t,T,\theta\right) = \frac{1}{\theta} \int_{T}^{T+\theta} f^c\left(t,u\right) du$
and 
\[f^{\beta}\left(t,T,\theta\right) = e^{-\beta(T-t)}\left(\frac{1-e^{-\beta \theta}}{\beta \theta} \right) Z_t^{\beta} + \frac{\lambda  x \star \nu }{\beta}\left[1-e^{-\beta(T-t)}\left(\frac{1-e^{-\beta \theta}}{\beta \theta} \right)\right].\]

\end{enumerate}
\end{theorem}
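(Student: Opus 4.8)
The theorem is a statement about conditional expectations of the model components, so the natural approach is to compute $\mathbb{E}^{\mathbb{Q}}[Z_T^\beta\,|\,\mathcal{F}_t]$ explicitly and then linearly combine with $f^c(t,T)=\mathbb{E}^{\mathbb{Q}}[X_T^c\,|\,\mathcal{F}_t]$, which is left unspecified by design. The key observation is that $Z_t^\beta$ defined by \eqref{our model third part} satisfies the linear SDE $dZ_t^\beta = -\beta Z_t^\beta\,dt + \int_{\R} x\,\underline{p}(dt,dx)$, equivalently
\[
Z_T^\beta = e^{-\beta(T-t)}Z_t^\beta + \int_t^T\int_{\R} x\,e^{-\beta(T-s)}\,\underline{p}(ds,dx).
\]
The first term on the right is $\mathcal{F}_t$-measurable, so it passes through the conditional expectation untouched. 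For the second term, I would use the independence and stationarity of the Poisson random measure increments together with the compensation formula: since $\underline{p}$ has intensity $\lambda\,ds\otimes\nu(dx)$ and is independent of $\mathcal{F}_t$ on $(t,\infty)$,
\[
\mathbb{E}^{\mathbb{Q}}\Big[\int_t^T\int_{\R} x\,e^{-\beta(T-s)}\,\underline{p}(ds,dx)\,\Big|\,\mathcal{F}_t\Big] = \lambda\,(x\star\nu)\int_t^T e^{-\beta(T-s)}\,ds = \frac{\lambda\,x\star\nu}{\beta}\big(1-e^{-\beta(T-t)}\big).
\]
Adding these gives $f^\beta(t,T)$; adding $f^c(t,T)$ gives part (1). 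A word on integrability: Assumption \ref{assumpsigma} gives $|x|^2\star\nu<\infty$ hence $|x|\star\nu<\infty$ (since $\nu$ is a probability measure), so all the integrals above are finite and Fubini/the compensation formula apply legitimately; one should also note $\mathbb{E}^{\mathbb{Q}}[(X_T^c)^2]<\infty$ so $f^c(t,T)$ is well defined.

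**Part (2).** This is a direct consequence of part (1) by integrating in the maturity variable. Starting from $f(t,T,\theta)=\frac{1}{\theta}\int_T^{T+\theta} f(t,u)\,du$ and using linearity, the continuous part is $f^c(t,T,\theta)$ by definition, and for the spike part one integrates $f^\beta(t,u) = e^{-\beta(u-t)}Z_t^\beta + \frac{\lambda\,x\star\nu}{\beta}(1-e^{-\beta(u-t)})$ over $u\in[T,T+\theta]$. The term $\frac{1}{\theta}\int_T^{T+\theta} e^{-\beta(u-t)}\,du = e^{-\beta(T-t)}\cdot\frac{1-e^{-\beta\theta}}{\beta\theta}$ multiplies $Z_t^\beta$, and the remaining term is $\frac{\lambda\,x\star\nu}{\beta}$ times $\frac{1}{\theta}\int_T^{T+\theta}(1-e^{-\beta(u-t)})\,du = 1 - e^{-\beta(T-t)}\frac{1-e^{-\beta\theta}}{\beta\theta}$, which is exactly the claimed formula. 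No further probabilistic input is needed here—it is just Fubini to interchange $\mathbb{E}^{\mathbb{Q}}$ and $\int_T^{T+\theta}$, justified again by the integrability bounds above.

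**Main obstacle.** There is no deep obstacle; this is essentially a computation. The only point requiring a modicum of care is the rigorous justification of the conditional compensation formula for the stochastic integral $\int_t^T\int_{\R} x\,e^{-\beta(T-s)}\,\underline{p}(ds,dx)$ against $\mathcal{F}_t$—one wants to invoke that the compensated measure $\underline{p}-\underline{q}$ is a martingale measure with respect to $(\mathcal{F}_t)$, so that the compensated integral has zero conditional mean, leaving only the drift $\int_t^T\int_{\R} x\,e^{-\beta(T-s)}\,\lambda\,ds\,\nu(dx)$. Since the integrand $x\,e^{-\beta(T-s)}$ is deterministic and $|x|\star\nu<\infty$, this is standard (e.g. the $L^1$ or $L^2$ theory of integrals against Poisson random measures), but it is the one place where the structure of the model, rather than pure algebra, enters. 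Everything else—the SDE representation of $Z^\beta$, linearity of conditional expectation, and the elementary integrals in maturity—is routine.
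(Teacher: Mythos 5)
Your proof is correct and is precisely the routine computation the paper omits, declaring only that ``the proof is straightforward'': the decomposition $Z_T^{\beta}=e^{-\beta(T-t)}Z_t^{\beta}+\int_t^T\int_{\R}x\,e^{-\beta(T-s)}\underline{p}(ds,dx)$, the compensation formula for the Poisson random measure on $(t,T]$ (legitimate since $|x|\star\nu\leq(|x|^2\star\nu)^{1/2}<\infty$ under Assumption \ref{assumpsigma}), and Fubini in the maturity variable for part (2). Nothing to add.
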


The proof is straightforward. 
Theorem \ref{forward} has three major consequences.
\begin{enumerate}
\item The proposed model \eqref{our model first part}-\eqref{our model second part}-\eqref{our model third part} allows us to get analytical formulas for the forward prices $f(t,T)$ and $f(t,T,\theta)$, provided that we have analytical formulas for the continuous part $f^c\left(t,T\right)$. This then covers a wide range of models. 
\item We can easily write the model \eqref{our model first part}-\eqref{our model second part}-\eqref{our model third part} in its equivalent form on forward prices by $f(t,T)=f^c\left(t,T\right) + f^{\beta}\left(t,T\right)$ with
\begin{equation}
\label{eq_fbeta}
f^{\beta}\left(t,T\right) = \int_{0}^t  \int_{\mathbb{R}} x e^{-\beta\left(T-t\right)} \underline{p}\left(ds,dx\right). 
\end{equation}
It is then easy to consider the proposed model as an extension of any classical (continuous) model written on the forward prices, allowing to represent spikes in the spot price dynamics. 
\item If $\lambda/\beta$ is small and $\beta$ is large, the impact of $f^\beta$ on the forward prices is negligible and the additive spike process has only an impact on the spot prices. This is consistent with the observations in the electricity markets, the forward prices showing no spikes. 
\end{enumerate}
These consequences are of significant importance. Especially in the case where $\lambda/\beta$ is small and $\beta$ is large, this means that the spike process can be treated independently, both in parameter estimation and in simulation. Indeed, consider any existing (continuous) model describing (or simulating) $f^c(t,T)$ and calibrated on forward prices, the proposed model then consists in adding (simulations of) the spike process calibrated on spot prices following the estimation procedure previously described.  
\begin{theorem} \label{forwardlog} Suppose that the logarithm of the spot price is modelled by
$\log(S_t) = X_t^c + Z_t^{\beta},\;\;t\geq 0$,
according to \eqref{our model first part}-\eqref{our model second part}-\eqref{our model third part}
under a risk-neutral probability $\mathbb{Q}$. Let assume that $\int_{\mathbb{R}} e^{ux} \nu\left(dx\right) < \infty$ for all $u \in \left[0,1\right]$.
We have an explicit representation of
$f\left(t,T\right) = \mathbb{E}^{\mathbb{Q}}\big[ S_T\,\big|\, \mathcal{F}_t \big]$ given by 
\[f\left(t,T\right) = f^c\left(t,T\right)f^{\beta}\left(t,T\right),\]
with 
\[f^c\left(t,T\right) = \mathbb{E}^{\mathbb{Q}}\big[e^{X^c_T} \,\big|\, \mathcal{F}_t \big]\]
and
\[f^{\beta}\left(t,T\right) = e^{e^{-\beta \left(T-t\right)}Z_t^{\beta}}e^{ \frac{\lambda}{\beta}  \int_0^1 (\int_{\mathbb{R}}e^{ux}\nu\left(dx\right)-\int_{\mathbb{R}}e^{ue^{-\beta(T-t)}x}\nu(dx))du }.\]
\end{theorem}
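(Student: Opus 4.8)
The plan is to exploit the multiplicative structure $S_T=e^{X^c_T}\,e^{Z^{\beta}_T}$ and the independence between the Brownian motion $W$ driving $X^c$ and the Poisson random measure $\underline p$ driving $Z^{\beta}$. First I would split off the part of $Z^{\beta}_T$ that is known at time $t$: by the mean-reversion (semigroup) identity,
\[
Z^{\beta}_T \;=\; e^{-\beta(T-t)}Z^{\beta}_t \;+\; \widetilde Z_{t,T},\qquad \widetilde Z_{t,T}:=\int_{(t,T]\times\R} x\,e^{-\beta(T-s)}\,\underline p(ds,dx),
\]
where the first term is $\mathcal F_t$-measurable and $\widetilde Z_{t,T}$, being built from $\underline p$ on $(t,T]$ only, is independent of $\mathcal F_t$ (and in fact of $\sigma(\mathcal F_t,W)$). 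Writing $X^c_T=X^c_t+(X^c_T-X^c_t)$ as well and conditioning on $\mathcal F_t$, this independence yields
\[
f(t,T)=\mathbb E^{\mathbb Q}\!\big[e^{X^c_T}\,\big|\,\mathcal F_t\big]\cdot e^{e^{-\beta(T-t)}Z^{\beta}_t}\cdot \mathbb E^{\mathbb Q}\!\big[e^{\widetilde Z_{t,T}}\big],
\]
so the first factor is $f^c(t,T)$ and it remains to compute the Laplace transform of the Poisson integral $\widetilde Z_{t,T}$.

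For that I would apply the exponential (Campbell) formula for Poisson random measures: with intensity $\lambda\,ds\otimes\nu(dx)$ and integrand $g(s,x)=x\,e^{-\beta(T-s)}$ on $(t,T]\times\R$,
\[
\mathbb E^{\mathbb Q}\!\big[e^{\widetilde Z_{t,T}}\big]=\exp\Big(\lambda\int_t^T\!\!\int_{\R}\big(e^{x e^{-\beta(T-s)}}-1\big)\,\nu(dx)\,ds\Big).
\]
The standing hypothesis $\int_{\R}e^{ux}\nu(dx)<\infty$ for all $u\in[0,1]$ is exactly what makes the inner integral finite for every $s\in[t,T]$ (note $e^{-\beta(T-s)}\in[0,1]$); it therefore also guarantees $S_T\in L^1(\mathbb Q)$, so that $f(t,T)$ is well defined, and it licenses both the exponential formula and the conditioning carried out above.

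It then remains to rewrite the time integral in the closed form of the statement, via the substitution $u=e^{-\beta(T-s)}$ (equivalently $r=T-s$ followed by $u=e^{-\beta r}$) together with $\int_{\R}\nu(dx)=1$. Collecting the three factors gives $f(t,T)=f^c(t,T)\,f^{\beta}(t,T)$ with the announced $f^{\beta}(t,T)$.

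The computation is short; the only genuinely delicate point is the conditional-independence factorisation in the first step, which has to be deduced from the independence of $W$ and $\underline p$ and from the structure of the filtration $(\mathcal F_t)$ rather than merely asserted — in particular one must make sure that $\mu,\sigma$, hence $X^c$, carry no information on $\underline p$. The secondary point requiring care is the integrability bookkeeping, which is precisely what the extra exponential-moment assumption on $\nu$ supplies; the final change of variables is routine, being the one-dimensional analogue of the computation behind Theorem~\ref{forward}.
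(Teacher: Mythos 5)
Your overall strategy (split $Z^\beta_T = e^{-\beta(T-t)}Z^\beta_t + \widetilde Z_{t,T}$, factor out the $\mathcal F_t$-measurable part by the independence of $W$ and $\underline p$, then apply the exponential formula for Poisson random measures) is exactly the intended one — the paper itself only remarks that the proof is straightforward — and everything up to and including
\[
\mathbb E^{\mathbb Q}\big[e^{\widetilde Z_{t,T}}\big]=\exp\Big(\lambda\int_t^T\!\!\int_{\R}\big(e^{xe^{-\beta(T-s)}}-1\big)\,\nu(dx)\,ds\Big)
\]
is correct. The genuine gap is in the step you dismiss as routine. The substitution $u=e^{-\beta(T-s)}$ carries a Jacobian $ds=du/(\beta u)$ and yields, with $\phi(u)=\int_{\R}e^{ux}\nu(dx)$ and $a=e^{-\beta(T-t)}$,
\[
\frac{\lambda}{\beta}\int_{a}^{1}\frac{\phi(u)-1}{u}\,du ,
\]
which is \emph{not} the exponent displayed in the statement, namely $\frac{\lambda}{\beta}\int_0^1\big(\phi(u)-\phi(au)\big)\,du$: for $\nu=\delta_1$ and $a=1/2$ the former is $\int_{1/2}^1 u^{-1}(e^u-1)\,du\approx 0.748$ while the latter is $(e-1)-2(e^{1/2}-1)\approx 0.421$. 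So the equality you assert cannot hold as stated. What is true — and what your argument actually needs — is the identity
\[
\int_{a}^{1}\frac{\phi(u)-1}{u}\,du=\int_0^1\frac{\phi(u)-\phi(au)}{u}\,du ,
\]
proved by substituting $v=au$ in $\int_\epsilon^1 u^{-1}\phi(au)\,du$ and checking that the logarithmic singularities at $0$ cancel precisely because $\phi(0)=\nu(\R)=1$; this is where your appeal to $\int_\R\nu(dx)=1$ genuinely enters, and it is more than a change of variables. In other words, the correct closed form carries an extra factor $1/u$ inside the $du$-integral, the formula as printed in the theorem omits it (evidently a misprint), and your proof as written derives neither the printed formula (which is false) nor the corrected one (the cancellation argument is missing). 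The conditioning/independence step you flag as delicate is fine under the paper's standing reductions ($\mu\equiv0$, $\sigma$ deterministic), so the only substantive repair needed is the final identity above, together with an explicit correction of the stated exponent.
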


The proof is straightforward and comments for the results of Theorem \ref{forward} can be transposed to the ones of Theorem \ref{forwardlog}. In particular, if $\lambda/\beta$ is small and $\beta$ is large, the term $f^{\beta}\left(t,T\right)$ is close to 1 and $f(t,T)$ to $f^c(t,T)$. A similar formula is given for the forward prices in \cite{hambly09} in the case where $X^c$ follows a Gaussian Ornstein-Uhlenbeck process.

\subsection{Specific model and change of measure}
\label{modelappli}
We address the problem of choosing the risk neutral probability which we illustrate with a specific and simple model: in the rest of this section, we consider the  model defined by 
\[f\left(t,T\right) = \int_{0}^t \mu_s ds + f^c\left(t,T\right) + f^{\beta}\left(t,T\right),\]
with $f^\beta(t,T)$ defined by \eqref{eq_fbeta} and where the continuous part $f^c(t,T)$ follows the dynamics
\[
df^c\left(t,T\right) = f^c\left(t,T\right)(\sigma_l dW_t^l + \sigma_s e^{-\alpha\left(T-t\right)}dW_t^s )
\]
with $(W^l_t, W^s_t)_{t \geq 0}$ a two-dimensional Brownian motion under the historical probability $\mathbb{P}$ with correlation $\rho$. This dynamics corresponds to a classical two factors model for forward prices of electricity \cite{kiesel09} or gas \cite{warin2012gas}. The forward price is driven by a short term factor with volatility $ \sigma_s e^{-\alpha\left(T-t\right)}$ and a long term factor with volatility $\sigma_l$. 
The short term volatility $ \sigma_s e^{-\alpha\left(T-t\right)}$ captures the Samuelson effect: the volatility increases when $T-t$ decreases. 
The spot price is then equal to 
$S_t = \int_{0}^t \mu_s ds + X^c_t + Z_t^{\beta}$
with 
\[X_t^c = f^c\left(0,t\right)\exp\left(-\frac{1}{2}\left[\sigma_l^2 t + \sigma_s^2 \frac{1-e^{-2\alpha t}}{2 \alpha} + 2 \rho \sigma_l \sigma_s \frac{1-e^{-\alpha t}}{\alpha} \right]  + \sigma_l dW^l_t + \sigma_s \int_{0}^t e^{-\alpha\left(t-u\right)}dW_u^s\right),\]
and the model then falls within the class of models \eqref{our model first part}-\eqref{our model second part}-\eqref{our model third part}.
\\
 
We have seen that the forward products are not impacted by the spikes if $\lambda/\beta$ is small and $\beta$ is large. However, it can have an important impact on options on the spot, for instance on a strip of Call options, with payoff of the form 
$\sum_{i=1}^{I} \left(S_{t_{i,n}} - K\right)^+$
for prescribed dates $t_{i,n}$.
If we consider an option with payoff having a single component $\left(S_t - K\right)^+$, the jump process will have a weak impact: the probability to have a jump at time $t$ is equal to 0 and even if there is a jump before, it disappears very quickly. However, the jump process may have a significant impact on the value of options with payoff $\sum_{i=1}^{I} \left(S_{t_{i,n}} - K\right)^+$ because the probability of having spikes on $\left[0,1\right]$ is non zero. (Note that only upward spikes will have an impact on the price of these options.)

\medskip
Unlike spot prices, forward contracts are tradable assets. In the following, we assume absence of arbitrage opportunity. According to the fundamental theorem of asset pricing, there exists a probability measure $\mathbb{Q}$ equivalent to the historical measure $\mathbb{P}$ such that $f\left(t,T\right)$ is a local martingale under $\mathbb{Q}$\footnotemark[3]. Because of the presence of jumps, the market is incomplete and $\mathbb{Q}$ is not unique. According to \cite[Theorem 2]{protter08}, there exists a predictable process $\left(\gamma_t\right)_{t \geq 0}$ and a predictable process $\left(Y\left(t,x\right)\right)_{t\geq 0, x \in \mathbb{R}}$ such that: 
\begin{enumerate}
\item[1)] $\mu_t  + \gamma_t c_t + \int_{0}^t \int_{\mathbb{R}} x\lambda  Y\left(t,x\right) e^{-\beta\left(T-t\right)} \nu\left(dx\right) = 0$ ($\mathbb{P} \otimes dt$ almost-surely),
\item[2)] $\int_{0}^1 \gamma_s^2 c_s ds < \infty$ almost surely, 
\item[3)] $\int_{0}^1 \int_{\mathbb{R}} |x|^2 \wedge |x| Y\left(t,x\right) e^{-\beta\left(T-t\right)} \lambda \nu\left(dx\right) < \infty$ ($\mathbb{P} \otimes dt$ almost -surely).
\end{enumerate}
with $c_t$ equal to $f^c\left(t,T\right)\left(\sigma_l^2 + \sigma_s^2 e^{-2\alpha\left(T-t\right)}+ 2 \rho \sigma_l \sigma_s e^{-\alpha\left(T-t\right)} \right)^{1/2}$ in our case. Under the equivalent measure, $f\left(t,T\right)$ is an It\^o semi-martingale with drift 0, volatility $c_t$ and jump measure $p^* = Y p$ following 
\[df\left(t,T\right) = df^c\left(t,T\right) + df^{\beta}\left(t,T\right)\]
with 
\[
df^c\left(t,T\right) = f^c\left(t,T\right)\left(\sigma_l dW_t^{l,*} + \sigma_s e^{-\alpha\left(T-t\right)}dW_t^{s,*} \right),
\]
\[df^{\beta}\left(t,T\right) = \int_{\mathbb{R}} x e^{-\beta\left(T-t\right)} \left(\underline{p^*}\left(dt,dx\right) -\lambda Y\left(t,x\right) \nu\left(dx\right)dt\right)\]
for two Brownian motions $(W^{s,*},W^{l,*})$ under the new measure. The volatility does not change unlike the intensity and the law of jump sizes of the Poisson process.

\medskip
In order to choose the change of martingale measure, one usually considers an optimisation criterion. One of the most common used criterion is the local risk-minimisation introduced by F\"ollmer and Schweizer (see \cite{schweizer01} for details). The variance of the cost of the strategy is minimised locally, infinitesimally at each time. This strategy corresponds to choose the minimal martingale measure defined in \cite{follmer91}. Under certain assumptions, this measure is a true probability measure and the asset is a local martingale under this measure. Furthermore, the intensity changes and depends on the drift $\mu$, which is also true for most common criteria. Since we work on a finite time framework, the drift is not identifiable and it is not possible to estimate it. \\

In the following we choose the historical approach of Merton consisting in picking a change of probability that does not affect the intensity and the jump sizes of the Poisson measure \cite{merton76}. The equivalent probability measure is defined by
\[\frac{d\mathbb{Q}^M}{d\mathbb{P}} = \exp\big(\int_{0}^1 \theta_u d\tfrac{\sigma_l W^l_u + \sigma_s e^{-\beta\left(T-u\right)}W^s_u}{\left(\sigma_l^2 + \sigma_s^2 e^{-2\alpha\left(T-u\right)} + 2 \rho \sigma_l \sigma_s e^{-\alpha\left(T-u\right)} \right)^{1/2}} - \frac{1}{2}\int_{0}^1 \theta_u^2 du\big)\]
with 
$\theta_u = \frac{-(\mu_u + e^{-\beta(T-u)\int_{\mathbb{R}} x \nu(dx)})}{f^c(u,T)\left(\sigma_l^2 + \sigma_s^2 e^{-2\alpha\left(T-u\right)} + 2 \rho \sigma_l \sigma_s e^{-\alpha\left(T-u\right)} \right)^{1/2}}.$
The Novikov's condition is satisfied so it defines in turn a genuine probability measure. Under $\mathbb{Q}^M$, the price of the forward contract $f\left(t,T\right)$ follows the dynamics 
$df\left(t,T\right) = df^c\left(t,T\right) + df^{\beta}\left(t,T\right)$
with 
\[
df^c\left(t,T\right) = f^c\left(t,T\right)\big(\sigma_l dW_t^{l,\mathbb{Q}^M} + \sigma_s e^{-\alpha\left(T-t\right)}dW_t^{s,\mathbb{Q}^M} \big)
\]
and
\[df^{\beta}\left(t,T\right) = \int_{\mathbb{R}} x e^{-\beta\left(T-t\right)} \left(\underline{p}\left(dt,dx\right) -\lambda \nu\left(dx\right)dt\right),\]
where $W^{l,\mathbb{Q}^M}$ and $W^{s,\mathbb{Q}^M}$ are two $\mathbb{Q}^M$-Brownian motions. Merton chooses this probability considering that the risk associated to the jumps is diversifiable. As noticed in Tankov in \cite[Section 10.1]{tankov03}, using this strategy leaves one exposed to the risk of the jumps. It only corrects the average effect of jumps (provided that the jump component of the electricity price is independent of the other assets, which is the case here: we understand the electricity spikes caused by physical exogenous events;
it can in particular be caused by the production capacity and the demand which are not assets (see the structural model of Aid et al. \cite{aid09} for instance). 
Finally, the price of an option with payoff $H\left(S_T\right) = H\left(f\left(T,T\right)\right)$ at time t is given by 
$\mathbb{E}^{\mathbb{Q}^M}\big[H\left(S_T\right) \,|\, \mathcal{F}_t\big]$.


\subsection{Application to Call option pricing in the French market}
\label{sec_call_option}

In the following, we focus on the French market and we work on the model of Section \ref{modelappli}. We dispose of the hourly spot and daily forward prices in 2015 and 2016.

\medskip
\paragraph{{\bf Parameters of $f^{\beta}$}} We use the parameters found in Table \ref{dataresults} to calibrate $Z^{\beta}$ to the spot prices. We model the size of the jumps by its empirical distribution, each jump being estimated with $\Delta^n_{\mathcal{I}_n\left(q\right)} X$, knowing that a bias (mentioned in the end of section \ref{sec_estimation_jumps}) is present.

\medskip
\paragraph{{\bf Parameters of $f^c$}}
We consider the following forward products in the French market: 1 to 4 Week-ahead, 1 to 3 Month-ahead, 1 to 4 Quarter-ahead and 1 and 2 Year-ahead products. As $\widehat{\lambda}_n/\widehat{\beta}_n$ is small and $\widehat{\beta}_n$ is large, we can neglect the jump part on the forward prices and consider that the forward products have only a continuous part. We use the method of F\'eron and Daboussi \cite{feron15} to calibrate the parameters of $f^c$ to the observed forward prices. We find for the different parameters $\alpha = 12.56 \; y^{-1}$, $\sigma_s = 1.03 \; y^{-\frac{1}{2}}$, $\sigma_l = 0.25 \; y^{-\frac{1}{2}}$ and $\rho = -0.11$.

\medskip
\paragraph{{\bf Forward products}} In Figure \ref{simuproduct}, we display a simulation of the spot price, the 1WAH and the 1MAH with and in absence of spikes. The parameters of the spike component are the one of Table \ref{dataresults} with $C=5$. We observe that the difference between the trajectory of the forward products with and without spikes is very small but significant for the spot price.

\begin{figure}[h!]
    \centering
    \begin{subfigure}[b]{0.35\textwidth}
        \centering
        \includegraphics[width=\textwidth]{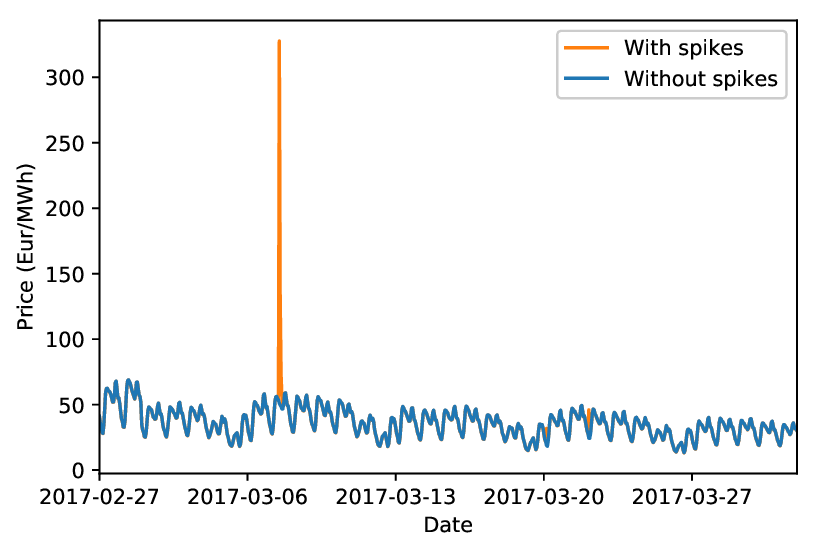}
        \caption{\it Spot.}
    \end{subfigure}
    \begin{subfigure}[b]{0.35\textwidth}
        \centering
        \includegraphics[width=\textwidth]{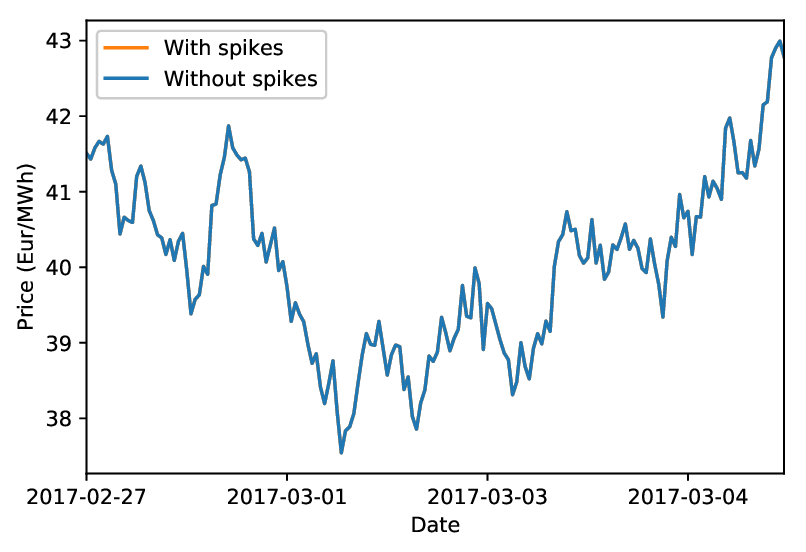}
        \caption{\it 1WAH.}
    \end{subfigure}
      
        \begin{subfigure}[b]{0.35\textwidth}
        \centering
        \includegraphics[width=\textwidth]{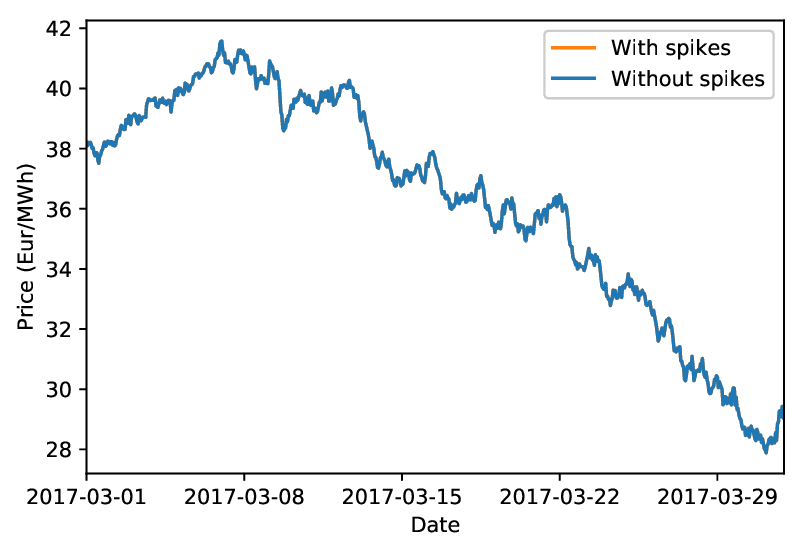}
        \caption{\it 1MAH.}
    \end{subfigure}
    \caption{\it \label{simuproduct} Simulation of different products in a two factor model with and without spikes between the $27^{th}$ of February 2017 and the $31^{st}$ of March 2017. We illustrate the spot, the 1WAH starting the $27^{th}$ of February 2017 and the 1MAH starting the $01^{st}$ of March 2017.}
\end{figure}

\medskip

\paragraph{{\bf Strip of Call options}}  We consider an option of payoff $\sum_{i=1}^{I} \left(S_{t_{i,n}} - K\right)^+$, with $I=8760$ corresponding to possible exercises each hour of one year. This choice of payoff is motivated by the valuation of a power plant: the produced electricity is sold on the market at $S$ and the production cost is $K$. A high strike corresponds to peaking power stations activated when other plants can not satisfy the total load. The price of such an option is equal to $\mathbb{E}^{\mathbb{Q}^M}\big[\sum_{i=1}^{I} \left(S_{t_{i,n}} - K\right)^+\big]$\footnotemark[3]. We give in Table \ref{pricingresults} confidence intervals at level $95\%$ for the option price with the different strikes $100$, $200$ and $300$, computed using Monte Carlo method with $10000$ simulations. We consider the case where there is no spikes and the cases with spikes using the different threshold of the form $v_n = C \hat{\sigma} \Delta_n^{-0.01}$ with $C = 3$, $C = 4$ and $C = 5$. Considering spikes leads to higher value for the strip options. Furthermore, options valued at zero have now non negligible values. We notice that the choice of the threshold have an impact on the price of the option. Indeed, a higher $C$ leads to less jumps and a smaller $\lambda$, see Table \ref{dataresults}, and then to a lower price as most of the jumps are positive. However, the impact is low since we keep the larger jumps when $C$ is increased  which are the ones impacting the price for high strikes.

\footnotetext[3]{For simplicity, we consider a risk-free rate equal to zero}
 \begin{table}[h]
 \centering
\begin{tabular}{|c|c|c|c|}
\hline
Model/Strike  & 100 &  200 & 300 \\
\hline
Without spike  & $\left[1716.22 , 1806.83\right]$   & $\left[0.0089 , 0.063 \right]$&  $\left[0,0\right]$ \\
Spikes, $C = 3$  &  $\left[2482.57 , 2576.17 \right]$   & $\left[434.21 , 450.26 \right]$&  $\left[264.14, 274.00\right]$ \\
Spikes, $C = 4$  &    $\left[2442.66 , 2536.26\right]$   & $\left[412.98 , 428.44\right]$&  $\left[251.72 , 262.29\right]$ \\
Spikes, $C = 5$  &   $\left[2417.24 , 2510.79 \right]$   & $\left[397.74 , 412.76 \right]$&  $\left[242.04 , 252.41\right]$ 
\\
\hline
\end{tabular}
 \caption{\label{pricingresults} \it Confidence intervals at level $95\%$ for the price of strip options computed using Monte Carlo method with $10000$ simulations for different strikes and different models.} 
 \end{table}

 
 \subsection{A word of conclusion}
In conclusion, we see that our statistical methodology and estimators are feasible on simulated data, with quite satisfactory results. For practical implementation, both for the statistical estimation of the parameters and for option pricing, the picture is fairly good in most cases. Depending on the market structure and location, we meet some regimes that are compatible with our assumptions and our asymptotic regimes. However, we fail to capture in a robust way the versatility of every market. This is explained by two facts: firstly, our jump model is too simple to apply in all situations; secondly, we rely on a high number of spikes and fast mean-reversion, and this is not always the case in practice. One could certainly devise fine (and non-asymptotic) statistical methods to incorporate this issue in some cases, at the risk nonetheless of loosing the robustness with respect to the underlying continuous part of price model, which is completely nonparametric in the present case.

\section{Proofs}
\label{proofs}

In the following proofs, we set the drift $(\mu_t)_{t \geq 0}$ vanishes identically. Generalizing to the non-null drift case is done using the usual argument based on Girsanov theorem.  Also, we assume for simplicity that $(\sigma_t)_{t \geq 0}$ is a deterministic function, in order to simplify in particular the proofs for central limit theorems.

\subsection{Proof of Proposition \ref{estimatorn}}

The proof follows the path of \cite[Theorem 10.26, p.374]{ait14} and is also close to Mancini \cite{mancini04} in spirit. 
We will denote by $\xi_\nu$ a generic random variable distributed according to $\nu$. 
Let 
$$\mathcal {\mathcal A}_n = \big\{i \in \{1,\ldots, n\}, i \neq i\left(n,q\right)\;\forall q \geq 1\big\}$$
be the set of indices $i$ such that the interval $((i-1)\Delta_n, i\Delta_n]$ contains no jump. 

\subsubsection*{Proof of Proposition \ref{estimatorn} under Assumption \ref{assumpestimatorn} (I)}
We first need to show 
\begin{equation} \label{convsup}
\mathbb{P}\big(\max_{i \in {\mathcal A}_n} \tfrac{|\Delta^n_i X|}{\sqrt{\Delta_n}} > v_n  \big) \rightarrow 0,
\end{equation}
\begin{equation} \label{convsup2}
\mathbb{P}\big(\min_{i \in {\mathcal A}_n^c} \tfrac{|\Delta^n_i X|}{\sqrt{\Delta_n}} < v_n  \big) \rightarrow 0
\end{equation} and 
\begin{equation} \label{no2jumps}
\mathbb{P}\big(\max_{1 \leq i \leq n} \Delta^n_i N \geq 2\big) \rightarrow 0.
\end{equation}

We have
\[\frac{\Delta_i^n X}{\sqrt{\Delta_n}} = \frac{-\beta_n  \int_{t_{i-1,n}}^{t_{i,n}} Z^{\beta}_s ds}{\sqrt{\Delta_n}} + \frac{\Delta^n_i X'}{\sqrt{\Delta_n}},\]
with 
\[X'_t = \int_0^t \mu_s ds + \int_0^t \sigma_s dW_s + \int_0^t \int_{\mathbb{R}} x \underline{p}\left(ds,dx\right).\]
By \cite[Equation (10.71), p.374]{ait14} we have
$\mathbb{P}\big(\underset{i \in {\mathcal A}_n}{\sup} \frac{|\Delta^n_i X'|}{\sqrt{\Delta_n}} > v_n  \big) \rightarrow 0,$
Therefore, in order to prove \eqref{convsup}, we need to show that 
\begin{equation} \label{convsupz}
\mathbb{P}\big(\underset{i \in {\mathcal A}_n}{\sup}\tfrac{|\beta_n \int_{t_{i-1,n}}^{t_{i,n}} Z^{\beta}_{s} ds|}{\sqrt{\Delta_n}} > v_n\big)  \rightarrow 0.
\end{equation}
Since $|\beta_n \int_{t_{i-1,n}}^{t_{i,n}} Z^{\beta}_s ds| = \left(1-e^{-\beta_n \Delta_n}\right) |Z^{\beta}_{t_{i-1,n}}|$ for $i \in {\mathcal A}_n$, we have 
\begin{align*}
\PP\big(\max_{i \in {\mathcal A}_n}\frac{|\beta_n \int_{t_{i-1,n}}^{t_{i,n}} Z^{\beta}_{s} ds|}{\sqrt{\Delta_n}} > v_n\big)& \leq \PP\big(\beta_n \sqrt{\Delta_n}\sup_{t \in [0,1]}\big| Z_t^\beta \big| > v_n\big)\\
& \leq \PP\big(\sup_{t \in [0,1]} \int_0^t \int_{\mathbb{R}} |x| e^{-\beta_n\left(t -s\right)} \underline{p}\left(ds,dx\right) >  \frac{v_n}{\beta_n \sqrt{\Delta_n}} \big) \\
& \leq 2\lambda_n \mathbb{P}\big( |\xi_\nu| > \frac{v_n}{\beta_n \sqrt{\Delta_n}}\big)
\end{align*}
by Markov's inequality and  \cite[Equation (10)]{bierme12} on the expectation of the crossings of shot noise processes. This last term converges to $0$ by Assumption \ref{assumpestimatorn} (ii) and \eqref{convsupz} follows which completes the proof of \eqref{convsup}. \\


We next turn to \eqref{no2jumps}. The left hand side of \eqref{no2jumps} is equal to 
$\mathbb{P}\left(\cup_{i=1}^n \Delta^n_i N \geq 2 \right) \lesssim \lambda_n^2 \Delta_n$
which converges to 0 if $\lambda_n \sqrt{\Delta_n} \rightarrow 0$. With no loss of generality we may (and will) work on the set $\{\underset{1 \leq i \leq n}{\max} \Delta^n_i N \leq 1\}$.  In the interval $\left(\left(i\left(n,q\right)-1\right)\Delta_n,i\left(n,q\right)\Delta_n\right]$, there is only one jump 
and we have \[
\Delta_{i\left(n,q\right)}^n Z^{\beta} = -\left(1-e^{-\beta_n \Delta_n}\right) Z_{t_{i\left(n,q\right)-1,n}} + e^{-\beta_n\left(i\left(n,q\right)\Delta_n-T_q\right)} \Delta X_{T_{q}}\]
for all $q \geq 1$, therefore
\begin{align*}
\frac{|\Delta^n_{i\left(n,q\right)} X|}{\sqrt{\Delta_n}} &\geq \frac{e^{-\beta_n\Delta_n}|\Delta X_{T_{q}}|}{\sqrt{\Delta_n}} - \frac{|\Delta_{i\left(n,q\right)}^n X^c|}{\sqrt{\Delta_n}} - \frac{|\left(1-e^{-\beta_n \Delta_n}\right) Z^{\beta}_{t_{i\left(n,q\right)-1,n}}|}{\sqrt{\Delta_n}}\\
&\geq \underset{1 \leq q \leq N_1}{\min} \frac{e^{-\beta_n\Delta_n}|\Delta X_{T_{q}}|}{\sqrt{\Delta_n}} -  \underset{i \in {\mathcal A}_n^c}{\max} \frac{|\Delta_{i}^n X^c|}{\sqrt{\Delta_n}}  - \max_{1 \leq i \leq n} \frac{\beta_n \Delta_n |Z^{\beta}_{t_{i,n}}|}{\sqrt{\Delta_n}}.
\end{align*}
It follows that $\mathbb{P}\big(\underset{i \in {\mathcal A}_n^c}{\min} \frac{|\Delta^n_i X|}{\sqrt{\Delta_n}} \leq v_n  \big)$ is dominated by the sum of the three following terms:
\begin{equation} \label{dominf1}
\mathbb{P}\big( \underset{1 \leq q \leq N_1}{\min} |\Delta X_{T_{q}}|e^{-\beta_n\Delta_n} \leq 3 v_n \sqrt{\Delta_n}\big), 
\end{equation}
\begin{equation} \label{dominf2}
\mathbb{P}\big(\underset{1 \leq q \leq N_1}{\min} |\Delta X_{T_{q}}|e^{-\beta_n\Delta_n} \leq  3 \underset{i \in {\mathcal A}_n^c}{\max} |\Delta_{i}^n X^c| \big) 
\end{equation} and
\begin{equation} \label{dominf3}
\mathbb{P}\big(\underset{1 \leq q \leq N_1}{\min} |\Delta X_{T_{q}}|e^{-\beta_n\Delta_n} \leq 3 \max_{1 \leq i \leq n} \beta_n \Delta_n |Z^{\beta}_{t_{i,n}}|  \big). 
\end{equation}
The term \eqref{dominf1} equals
\[\mathbb{E}\big[1 - \big(\mathbb{P}\big(|\xi_\nu| > 3 v_n \sqrt{\Delta_n} e^{\beta_n \Delta_n} \big)\big)^{N_1} \big] = 1 - \exp\big(-\lambda_n \mathbb{P}(|\xi_\nu| \leq 3 v_n \sqrt{\Delta_n} e^{\beta_n \Delta_n})\big)\]
and converges to 0 under the assumption $\lambda_n \mathbb{P}\big(|\xi_\nu| \leq   \Delta_n^{\frac{1}{2} - \varpi}\big) \rightarrow 0$. The term \eqref{dominf2} is dominated by 
\begin{equation} \label{dominf3b}
\mathbb{P}\big(\min_{1 \leq q \leq N_1} |\Delta X_{T_{q}}| e^{-\beta_n\Delta_n} \leq  3v_n \sqrt{\Delta_n} \big) + \mathbb{P}\big(v_n \sqrt{\Delta_n} \leq  \underset{i \in {\mathcal A}_n^c}{\max} |\Delta_{i}^n X^c| \big).
\end{equation}
The left hand side of \eqref{dominf3b} is equal to \eqref{dominf1} and converges to 0. According to \cite[Corollary 3.3]{mancini04}, for  $i \in \{1,...,n\}$,
\[\mathbb{P}\big(\Delta_{i}^n X^c > v_n \sqrt{\Delta_n} \big) \leq 2e^{-v_n^2/2{\bar{\sigma}}^2} .\]
The right hand side of \eqref{dominf3b} is dominated by 
\begin{align*}
\mathbb{E}\Big[\sum_{q=1}^{N_1} \mathbb{P}\big(|\Delta_{i}^n X^c| \geq  v_n \sqrt{\Delta_n} \big)\Big] &\leq \mathbb{E}\left[N_1\right]2e^{-v_n^2/2{\bar{\sigma}}^2} = 2 \lambda_n  e^{-v_n^2/2{\bar{\sigma}}^2}  \rightarrow 0.
\end{align*}
The term \eqref{dominf3} is dominated by 
\begin{equation} \label{dominf3a}
\mathbb{P}\big(\min_{1 \leq q \leq N_1} |\Delta X_{T_{q}}| e^{-\beta_n\Delta_n} \leq  3v_n \sqrt{\Delta_n} \big) + \mathbb{P}\big(v_n \sqrt{\Delta_n} \leq  \max_{1 \leq i \leq n} \beta_n \Delta_n |Z^{\beta}_{t_{i,n}}|  \big).
\end{equation}
The left hand side of \eqref{dominf3b} is equal to \eqref{dominf1} and converges to 0. The right hand side of \eqref{dominf3b} also converges to $0$ with the same argument as for \eqref{convsupz}.

\subsubsection*{Proof of Proposition \ref{estimatorn} under Assumption \ref{assumpestimatorn} (II)}
Since
\[\mathbb{P}\big(\underset{0 \leq i \leq n-k_n}{\max} |N_{\left(i+k_n\right)\Delta_n} - N_{i\Delta_n}| \geq 2 \big)  \lesssim \lambda_n^2 \Delta_n k^2_n \rightarrow 0,\]
therefore, we only need to prove the result on the set $\mathcal B_n = \{\underset{0 \leq i \leq n-k_n}{\max} |N_{\left(i+k_n\right)\Delta_n} - N_{i\Delta_n}| \leq 1 \}$. We need to show: 
\begin{equation} \label{convsupb}
\mathbb{P}\Big(\exists i \in {\mathcal A}_n, \frac{|\Delta^n_i X|}{\sqrt{\Delta_n}} > v_n \text{ and } \Delta_i^n X \Delta_{i+1}^n X < 0 \cap {\mathcal B}_n\Big) \rightarrow 0
\end{equation}
and
\begin{equation} \label{convsupb2}
\mathbb{P}\Big(\exists i \in \mathcal{A}^c_n, \frac{|\Delta^n_i X|}{\sqrt{\Delta_n}} < v_n \text{ or }  \Delta_i^n X \Delta_{i+1}^n X > 0 \cap \mathcal B_n\Big) \rightarrow 0.
\end{equation} 

We bound \eqref{convsupb} above by the sum of
$$
\mathbb{P}\big(\exists i \in {\mathcal A}_n, \frac{|\Delta^n_i X^c|}{\sqrt{\Delta_n}} > \frac{v_n}{2} \text{ and } \Delta_i^n X \Delta_{i+1}^n X < 0 \cap \mathcal B_n\big)
$$
and 
$$
\mathbb{P}\big(\exists i \in {\mathcal A}_n, \frac{|\Delta^n_i Z^{\beta}|}{\sqrt{\Delta_n}} > \frac{v_n}{2} \text{ and } \Delta_i^n X \Delta_{i+1}^n X < 0 \cap \mathcal B_n \big).
$$
The first term is bounded by
\[\mathbb{P}\big(\underset{1 \leq i \leq n}{\max} |\Delta^n_i X^c| > \frac{v_n \sqrt{\Delta_n}}{2}\big) \leq 2ne^{-v_n^2/8{\bar{\sigma}}^2}\]
and converges to 0. For the second term, we consider two cases: a jump occurs before $i\Delta_n$ or no such jump occurs, which leads us to further consider
\begin{equation}\label{convsupbba}
\mathbb{P}\big(\exists i \in {\mathcal A}_n, \frac{|\Delta^n_i Z^{\beta}|}{\sqrt{\Delta_n}} > \frac{v_n}{2} \text{, } \Delta_i^n X \Delta_{i+1}^n X < 0 \text{ and } \exists q \in \{1, \min\{k_n-2,i-1\}\}, i-q \in \mathcal A_n^c \} \cap \mathcal B_n\big)
\end{equation}
and 
\begin{equation} \label{convsupbbb}
\mathbb{P}\big(\exists i \in {\mathcal A}_n, \frac{|\Delta^n_i Z^{\beta}|}{\sqrt{\Delta_n}} > \frac{v_n}{2} \text{, } \Delta_i^n X \Delta_{i+1}^n X < 0 \text{ and } \forall q \in \{1, \min\{k_n-2,i-1\}\}, i-q \in {\mathcal A}_n \} \cap \mathcal B_n \big).
\end{equation}
For \eqref{convsupbba} since we work on $\mathcal B_n$, we have $i+1 \in {\mathcal A}_n$ hence 
 $\Delta_{i+1}^{n} Z^{\beta} = -(1-e^{-\beta_n \Delta_n}) Z^{\beta}_{t_{i,n}} = e^{-\beta_n \Delta_n} \Delta_i^n Z^{\beta}$
and \eqref{convsupbba} is dominated by the probability of the event 
\begin{align*}
& \{ \exists i \in {\mathcal A}_n,  -|\Delta_i^n X^c| |\Delta_{i+1}^n X^c| - |\Delta_i^n X^c| |\Delta_i^n Z^{\beta}| e^{-\beta_n\Delta_n} -| \Delta_{i+1}^n X^c| |\Delta_i^n Z^{\beta}| + \tfrac{v_n  \sqrt{\Delta_n}}{2}|\Delta_i^n Z^{\beta}|e^{-\beta_n\Delta_n} < 0\\
& \text{ and } \frac{|\Delta^n_i Z^{\beta}|}{\sqrt{\Delta_n}} > \frac{v_n}{2} \}  
\end{align*} 
equal to
\[
\mathbb{P}\big(\exists i \in {\mathcal A}_n,  \tfrac{v_n  \sqrt{\Delta_n}}{2}|\Delta_i^n Z^{\beta}|e^{-\beta_n\Delta_n} < 2\max_{1 \leq j \leq n} |\Delta^n_j X^c| |\Delta_{i}^n Z^{\beta}| + (\max_{1 \leq j \leq n} |\Delta^n_j X^c|)^2 \text{ and } \frac{|\Delta^n_i Z^{\beta}|}{\sqrt{\Delta_n}} > \frac{v_n}{2}     \big)
\]
and dominated by
\[\mathbb{P}\big(2\max_{1 \leq j \leq n} |\Delta^n_j X^c| > \tfrac{v_n  \sqrt{\Delta_n}}{4 }\big) + \mathbb{P}\big((\max_{1 \leq j \leq n} |\Delta^n_j X^c|)^2 > \frac{1}{2}(\tfrac{v_n  \sqrt{\Delta_n}}{2})^2\big) \rightarrow 0.
\]
 Concerning \eqref{convsupbbb}, we have, if $k_n \leq i+1$, we have that $|\Delta^n_i Z^{\beta}|$ is equal to
 \[\left(1-e^{-\beta_n \Delta_n}\right) e^{-\beta_n \Delta_n (k_n-2)} |Z^{\beta}_{\left(i-k_n+1\right)\Delta_n}| \leq \beta_n \Delta_n \underset{t \in \left[0,1\right]}{\sup} \int_{0}^{t} \int_{\mathbb{R}} |x| e^{-\beta_n\left(t -s\right)} \underline{p}\left(ds,dx\right)e^{-\beta_n \Delta_n (k_n-2)}.\]
 The inequality remains true if $i +1 < k_n$ as $\Delta_i^n Z^{\beta}$ is equal to 0 in this case. Thus, \eqref{convsupbbb} is dominated by 
 \begin{align*}
 \mathbb{P}\Big(\underset{t \in \left[0,1\right]}{\sup} \int_{0}^{t} \int_{\mathbb{R}} |x| e^{-\beta_n\left(t -s\right)} \underline{p}\left(ds,dx\right) > e^{\beta_n \Delta_n (k_n-2)} \tfrac{v_n}{\beta_n \sqrt{\Delta_n}} \Big) 
\leq 
\lambda_n \mathbb{P}\big(|\xi_{\nu}| > e^{\beta_n \Delta_n (k_n-1)} \tfrac{v_n}{\beta_n \sqrt{\Delta_n}}\big) \rightarrow 0
\end{align*}
using the same argument as for \eqref{convsupz} and the convergence \eqref{convsupb} follows.\\

We now turn to \eqref{convsupb2}. 
It suffices to show that both terms
$$
\mathbb{P}\big(\exists i \in {\mathcal A}_n^c, \frac{|\Delta^n_i X|}{\sqrt{\Delta_n}} \geq v_n \cap \mathcal B_n\big),
\;\;\text{and}\;\;
\mathbb{P}\big(\exists i \in {\mathcal A}_n^c, \Delta^n_i X \Delta^n_{i+1} X \geq 0 \cap \mathcal B_n\big)
$$
converge to $0$. The proof for the first term
is similar to the one of \eqref{convsup2}, the only difference being $\Delta^n_{i(n,q)} Z$ is equal to $(1-e^{-\beta_n \Delta_n})e^{-\beta_n \Delta_n (k_n-2)}Z_{t_{i(n,q)-k_n+1,n}} + e^{-\beta_n\left(t_{i(n,q),n}-T_q\right)} \Delta X_{T_q}$ if $k_n \leq i+1$ and $e^{-\beta_n\left(t_{i(n,q),n}-T_q\right)} \Delta X_{T_q}$ otherwise and that the term 
$\mathbb{P}( v_n \sqrt{\Delta_n} \leq 3 \max_{1 \leq i \leq n}\beta_n \Delta_n |Z^{\beta}_{t_{i,n}}|)$ needs to be replaced by 
$$\mathbb{P}\big( v_n \sqrt{\Delta_n} \leq 3 \max_{1 \leq i \leq n}\beta_n \Delta_n e^{-\beta_n \Delta_n k_n} |Z^{\beta}_{t_{i,n}}|  \big) \leq \lambda_n \mathbb{P}\big(|\xi_\nu| > e^{\beta_n \Delta_n (k_n-2)} \tfrac{v_n \sqrt{\Delta_n}}{\beta_n \Delta_n}  \big) \rightarrow 0.$$
For the second term,
it is sufficient to prove that $\Delta^n_{i\left(n,q\right)} X$ has the same sign as $\Delta X_{T_q}$ and that $\Delta^n_{i\left(n,q\right)+1} X$ has the opposite sign with probability one. We are thus led to show that    
\begin{equation} \label{convsupb2c1}
\mathbb{P}\big(\min_{1 \leq q \leq N_1} \Delta^n_{i\left(n,q\right)} X \Delta X_{T_q} < 0\big) \rightarrow 0
\end{equation}
and
\begin{equation} \label{convsupb2c2}
\mathbb{P}\big(\max_{1 \leq q \leq N_1}  \Delta^n_{i\left(n,q\right)+1} X \Delta X_{T_q} >  0\big) \rightarrow 0.
\end{equation}
We have that $\Delta^n_{i\left(n,q\right)} X \Delta X_{T_q}$ dominates 
\begin{align*}
- \max_{1 \leq q \leq N_1} |\Delta^n_{i\left(n,q\right)} X^c|  |\Delta X_{T_q}| & - (1-e^{-\beta_n \Delta_n}) e^{-\beta_n \Delta_n k_n} \underset{t \in \left[0,1\right]}{\sup} \int_{0}^{t} \int_{\mathbb{R}} |x| e^{-\beta_n\left(t -s\right)} \underline{p}\left(ds,dx\right) |\Delta X_{T_q}|  \\
&+ e^{-\beta_n \Delta_n} |\Delta X_{T_q}| \min_{1 \leq q \leq N_1} |\Delta X_{T_q}|.
\end{align*} 
We thus have that \eqref{convsupb2c1} is dominated by the probability of the event 
\begin{align*}
\Big\{e^{-\beta_n \Delta_n} \inf_{1 \leq q \leq N_1} |\Delta X_{T_q}| < &\sup_{1 \leq q \leq N_1} |\Delta^n_{i\left(n,q\right)} X^c| \\
&+ \left(1-e^{-\beta_n \Delta_n}\right) e^{-\beta_n \Delta_n k_n} \underset{t \in \left[0,1\right]}{\sup} \int_{0}^{t} \int_{\mathbb{R}} |x| e^{-\beta_n\left(t -s\right)} \underline{p}\left(ds,dx\right)\Big\}
\end{align*}
that converges to 0 if we use a similar proof than the one of \eqref{convsup2}. The proof of \eqref{convsupb2c2} is similar since no jump occurs in the interval $\left(i\left(n,q\right),i\left(n,q\right)+1\right]$ and 
\[
\Delta^n_{i\left(n,q\right)+1} Z^{\beta} = -\left(1-e^{-\beta_n \Delta_n}\right) Z^{\beta}_{i\left(n,q\right) \Delta_n}.
\]
The term \eqref{convsupb2c2} is then dominated by the probability of the event 
\begin{align*}
\Big\{e^{-\beta_n \Delta_n}(1-e^{-\beta_n\Delta_n}) &\min_{1 \leq q \leq N_1} |\Delta X_{T_q}| < \max_{1 \leq q \leq N_1} |\Delta^n_{i\left(n,q\right)} X^c| \\
&+ (1-e^{-\beta_n \Delta_n}) e^{-\beta_n \Delta_n (k_n+1)} \underset{t \in \left[0,1\right]}{\sup} \int_{0}^{t} \int_{\mathbb{R}} |x| e^{-\beta_n\left(t -s\right)} \underline{p}\left(ds,dx\right)\Big\}.
\end{align*}
The convergence to $0$ is obtained in the same way, except for the extra control of the terms 
\[ \mathbb{P}\big( \max_{1 \leq q \leq N_1} |\Delta^n_{i\left(n,q\right)} X^c| > v_n \sqrt{\Delta_n} (1-e^{-\beta_n \Delta_n}) \big) \leq \lambda_n e^{-v_n^2(1-e^{-\beta_n\Delta_n})^2/2\bar{\sigma}^2}\]
and  
$$
\mathbb{P}\Big(\underset{t \in \left[0,1\right]}{\sup} \int_{0}^{t} \int_{\mathbb{R}} |x| e^{-\beta_n\left(t -s\right)} \underline{p}\left(ds,dx\right) > e^{\beta_n \Delta_n (k_n+1)}v_n\sqrt{\Delta_n} \Big) \leq
\lambda_n \mathbb{P}\big(|\xi_\nu| > e^{\beta_n \Delta_n k_n}\Delta_n^{\frac{1}{2}-\varpi}\big)$$
that both converge to $0$. 
The proof of Proposition \ref{estimatorn} is complete.


\subsection{Proof of Theorem \ref{estimatorcj}}

\subsubsection*{Preparation for the proof}

In order to prove Theorem \ref{estimatorcj}, we start by giving an oracle estimator of $\beta_n$ when the jump times and their sizes are known.

\begin{proposition} \label{estimatorc} Work under Assumption \ref{assumpsigma} and \ref{betalambda}. Let $\beta_n \sqrt{\lambda_n \Delta_n}\rightarrow \infty$.
Define $\widehat{\beta}_n^{{\mathrm{oracle}}}$ via 
$$
\exp\big(-\Delta_n\widehat{\beta}_n^{{\mathrm{oracle}}}\big) = \max\Big\{1+\frac{\sum_{q \in \mathcal E_n} \mathrm{sgn}(\Delta X_{T_q}) \big(\Delta^n_{i\left(n,q\right)+1} X + 2\Delta_n\sum_{j=1}^{q-1}\Delta X_{T_j}\big)}{\sum_{q \in \mathcal E_n} \mathrm{sgn}(\Delta X_{T_q}) \Delta^n_{i\left(n,q\right)} X} {\bf 1}_{\{N_1 > 0\}},\Delta_n\Big\}, 
$$ 
with 
$ \mathcal E_n = \big\{q \in \{1,..,N_1\}, \; i\left(n,q\right)+1 \in {\mathcal A}_n \;\;\text{and}\;\;  i\left(n,q\right) < i\left(n,q+1\right)\big\}.$\\
\begin{itemize}
\item[i)] The following expansion holds on the set $\{N_1 > 0\}$: 
\[\frac{\widehat{\beta}_n^{{\mathrm{oracle}}} - \beta_n}{\beta_n} = \mathcal{M}_{n}^{oracle} + \mathcal V_{n}\mathcal J_{n}^T,\] 
with
\[\mathcal{M}_{n}^{oracle} = e^{\beta_n \Delta_n} \frac{\lambda_n}{\beta_n} \frac{(x \star \nu) \; (\mathrm{sgn}\left(x\right) \star \nu)}{|x| \star \nu} \big(\frac{e^{\beta_n \Delta_n}-1}{\beta_n \Delta_n} - \frac{\beta_n \Delta_n}{1-e^{-\beta_n \Delta_n}}  \big),\]
and $\mathcal V_{n} = (\mathcal V_n^{(i)})_{1 \leq i \leq 4} \in \R^4$ defined by
$$
\left\{
\begin{array}{ll}
    \mathcal V_{n}^{(1)}&=e^{\beta_n \Delta_n}\frac{\sqrt{\lambda_n}}{\sqrt{3} \beta_n |x| \star \nu} \big(\left(\mathrm{sgn}\left(x\right) \star \nu \right)^2 (|x|^2 \star \nu) + \left( x \star \nu \right)^2 - 2 (\mathrm{sgn}\left(x\right) \star \nu) (|x|^2 \star \nu)\big)^{1/2}, \\ 
    	\mathcal V_{n}^{(2)}  &= e^{\beta_n \Delta_n}\min\big\{\big(\tfrac{|x|^2 \star \nu}{\left(|x| \star \nu \right)^2} \frac{1}{2\beta_n}\frac{\left(1-e^{-2\beta_n \Delta_n}\right)}{2\beta_n \Delta_n}\big)^{1/2},\frac{\lambda_n}{\beta_n}\big\}, \\ 
       \mathcal V_{n}^{(3)} &=e^{\beta_n \Delta_n}\big(\frac{\beta_n \Delta_n}{1-e^{-\beta_n \Delta_n}}\big)\frac{\sqrt{\int_0^1 \sigma_s^2 ds}}{|x| \star \nu \sqrt{ \lambda_n}\beta_n \sqrt{\Delta_n}},\\ 
   	\mathcal V_{n}^{(4)}  &=  e^{\beta_n \Delta_n}\frac{\sqrt{\int_0^1 \sigma_s^2 ds} \sqrt{\Delta_n}}{|x| \star \nu \sqrt{\lambda_n}},\\
    \end{array}
\right.    
$$
and $\mathcal J_n = (\mathcal J_n^{(i)})_{1 \leq i \leq 4} \in \R^4$ is bounded in probability as $n \rightarrow \infty$.
\item[ii)] If $\lambda_n \rightarrow \infty$, then
$$
(\mathcal J_{n}^{(3)},\mathcal J_{n}^{(4)}) \rightarrow \mathcal{N}\left(0,\mathrm{Id}_{\R^2}\right)
$$
in distribution as $n \rightarrow \infty$.
\item[iii)] If $\lambda_n \rightarrow \infty$, $|x|^3 \star \nu < \infty$ and 
$(\mathrm{sgn}(x) \star \nu )^2 |x|^2 \star \nu + \left( x \star \nu \right)^2 - 2 \mathrm{sgn}\left(x\right) \star \nu |x|^2 \star \nu\neq 0$,
we have 
$$
(\mathcal J_{n}^{(1)}, \mathcal J_{n}^{(3)},J_{n}^{(4)}) \rightarrow \mathcal{N}(0,\mathrm{Id}_{\R^3})
$$ 
in distribution as $n \rightarrow \infty$.
\item[iv)] If $\beta_n/\lambda_n^2\rightarrow 0$, the conditions of $\mathrm{iii)}$ and $|x|^4 \star \nu < \infty$ hold together, we finally obtain  
$$
\mathcal J_{n} \rightarrow \mathcal{N}(0,\mathrm{Id}_{\R^4})
$$
in distribution as $n \rightarrow \infty$.
\end{itemize}
\end{proposition}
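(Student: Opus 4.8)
The plan is to reduce the statement to an algebraic identity for the ratio $\mathcal R_n$ appearing in the definition of $\exp(-\Delta_n\widehat\beta_n^{\mathrm{oracle}})$, then to a first-order expansion of $x\mapsto-\Delta_n^{-1}\log x$, and finally, for the Gaussian limits, to central limit theorems for triangular arrays. One works throughout on $\{N_1>0\}$ and sets $\tau_q=i(n,q)\Delta_n-T_q\in(0,\Delta_n)$, which, conditionally on $N_1$, on which observation intervals contain jumps, and on the jump sizes, are i.i.d.\ uniform on $(0,\Delta_n)$ (so that the conditional mean of $e^{-\beta_n\tau_q}$ equals $(1-e^{-\beta_n\Delta_n})/(\beta_n\Delta_n)$) and independent of the $\Delta X_{T_q}$. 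For $q\in\mathcal E_n$ there is exactly one jump in interval $i(n,q)$ and none in interval $i(n,q)+1$, so $\Delta^n_{i(n,q)+1}Z^{\beta_n}=-(1-e^{-\beta_n\Delta_n})Z^{\beta_n}_{t_{i(n,q),n}}$ with $Z^{\beta_n}_{t_{i(n,q),n}}=e^{-\beta_n\tau_q}\Delta X_{T_q}+P_q$, $P_q=\sum_{j<q}e^{-\beta_n(t_{i(n,q),n}-T_j)}\Delta X_{T_j}$; multiplying by $\mathrm{sgn}(\Delta X_{T_q})$,
\[
\mathrm{sgn}(\Delta X_{T_q})\Delta^n_{i(n,q)}X=e^{-\beta_n\tau_q}|\Delta X_{T_q}|+c^{(1)}_q,\qquad \mathrm{sgn}(\Delta X_{T_q})\Delta^n_{i(n,q)+1}X=-(1-e^{-\beta_n\Delta_n})e^{-\beta_n\tau_q}|\Delta X_{T_q}|+c^{(2)}_q,
\]
where $c^{(1)}_q,c^{(2)}_q$ collect the increments of $X^c$ over the respective intervals together with the spike tails $-(1-e^{-\beta_n\Delta_n})\mathrm{sgn}(\Delta X_{T_q})Z^{\beta_n}_{t_{i(n,q)-1,n}}$ and $-(1-e^{-\beta_n\Delta_n})\mathrm{sgn}(\Delta X_{T_q})P_q$ of the jumps that occurred before $T_q$. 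Summing over $q\in\mathcal E_n$, the leading terms $e^{-\beta_n\tau_q}|\Delta X_{T_q}|$ show that, up to the $X^c$-increments and the spike tails, the numerator of $\mathcal R_n$ equals $-(1-e^{-\beta_n\Delta_n})$ times its denominator — the mechanism producing $\widehat\beta_n^{\mathrm{oracle}}\approx\beta_n$.

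I would then establish a law of large numbers for the denominator $D_n=\sum_{q\in\mathcal E_n}\mathrm{sgn}(\Delta X_{T_q})\Delta^n_{i(n,q)}X$: conditioning on $N_1$ and the jump sizes (and using that $\{1,\dots,N_1\}\setminus\mathcal E_n$ has $O_{\mathbb P}(\lambda_n^2\Delta_n)$ elements, hence is asymptotically negligible by Assumption \ref{assumpestimatorn}), one gets $D_n\sim N_1\,(|x|\star\nu)\,(1-e^{-\beta_n\Delta_n})/(\beta_n\Delta_n)$ in probability, so $D_n$ is bounded below away from $0$ with probability tending to $1$ and the truncation at $\Delta_n$ in the defining $\max$ is asymptotically inactive. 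The numerator then splits into (a) $-(1-e^{-\beta_n\Delta_n})D_n$; (b) the spike-tail sum $-(1-e^{-\beta_n\Delta_n})\sum_q\mathrm{sgn}(\Delta X_{T_q})P_q$ together with the built-in correction $2\Delta_n\sum_q\mathrm{sgn}(\Delta X_{T_q})\sum_{j<q}\Delta X_{T_j}$, contributing, after the normalisation below and from the identities $\mathbb E[\mathrm{sgn}(\xi_\nu)]=\mathrm{sgn}(x)\star\nu$, $\mathbb E[\xi_\nu]=x\star\nu$ and $\mathbb E[P_q\mid\text{sizes}]\asymp(\lambda_n/\beta_n)(x\star\nu)$, the bias $\mathcal M_n^{\mathrm{oracle}}$ (their combined conditional mean) plus centred fluctuations of orders $\mathcal V_n^{(1)}\asymp\sqrt{\lambda_n}/\beta_n$ (from the count-sums $\sum_{j<q}\Delta X_{T_j}$) and $\mathcal V_n^{(2)}\asymp\min\{\beta_n^{-1/2},\lambda_n/\beta_n\}$ (from the exponentially damped part $P_q$); and (c) the continuous-martingale terms $\sum_q\mathrm{sgn}(\Delta X_{T_q})\Delta^n_{i(n,q)+1}X^c$ and, from $D_n$, $\sum_q\mathrm{sgn}(\Delta X_{T_q})\Delta^n_{i(n,q)}X^c$, of orders $\mathcal V_n^{(3)}\asymp(\beta_n\sqrt{\lambda_n\Delta_n})^{-1}$ and $\mathcal V_n^{(4)}\asymp\sqrt{\Delta_n/\lambda_n}$, asymptotically independent since they live on disjoint intervals. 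Rewriting $1+\mathcal R_n=e^{-\beta_n\Delta_n}(1+\epsilon_n)$ — which weights the numerator-corrections by $e^{\beta_n\Delta_n}$ and the denominator-corrections by $e^{\beta_n\Delta_n}-1$, both divided by $D_n$ — and using $\widehat\beta_n^{\mathrm{oracle}}=\beta_n-\Delta_n^{-1}\log(1+\epsilon_n)=\beta_n-\Delta_n^{-1}\epsilon_n+O_{\mathbb P}(\Delta_n^{-1}\epsilon_n^2)$, the quadratic term being negligible relative to the linear one under $\beta_n\sqrt{\lambda_n\Delta_n}\to\infty$, yields the expansion of part i) with $\mathcal J_n$ bounded in probability; the factor $e^{\beta_n\Delta_n}$ pervading $\mathcal M_n^{\mathrm{oracle}}$ and $\mathcal V_n$ is exactly this combination of the weights $e^{\beta_n\Delta_n}$, $e^{\beta_n\Delta_n}-1$ with the $\beta_n\Delta_n/(1-e^{-\beta_n\Delta_n})$ coming from $D_n^{-1}$.

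For parts ii)--iv) the Gaussian limits follow from a Lindeberg--Feller central limit theorem for the normalised sums over $q$, all of which involve $N_1\to\infty$ terms, hence require $\lambda_n\to\infty$. The pair $(\mathcal J_n^{(3)},\mathcal J_n^{(4)})$ is, conditionally on the jump configuration, centred Gaussian, and since $\Delta^n_{i(n,q)}X^c$ and $\Delta^n_{i(n,q)+1}X^c$ live on disjoint intervals the two components are asymptotically independent with identity covariance — whence the appearance of $\int_0^1\sigma_s^2ds$. The component $\mathcal J_n^{(1)}$ is the scaled fluctuation of the count-sums, which after Abel reordering into $\sum_j\Delta X_{T_j}\sum_{q>j}\mathrm{sgn}(\Delta X_{T_q})$ is a sum of (asymptotically) independent, linearly weighted increments; its limiting variance is $(\mathrm{sgn}(x)\star\nu)^2(|x|^2\star\nu)+(x\star\nu)^2-2(\mathrm{sgn}(x)\star\nu)(|x|^2\star\nu)$, the $1/\sqrt3$ prefactor in $\mathcal V_n^{(1)}$ coming from the linearly decreasing weights, Lyapunov's condition requiring the extra moment $|x|^3\star\nu<\infty$ and non-degeneracy of the limit being precisely the stated hypothesis. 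Finally $\mathcal J_n^{(2)}$, the centred exponentially damped spike-tail remainder, is ordered in $q$, so I would write it as a sum of martingale differences for the filtration generated by the first $q$ jumps and invoke a martingale CLT, using $\beta_n/\lambda_n^2\to0$ and $|x|^4\star\nu<\infty$ to verify the conditional-variance convergence and the conditional Lindeberg condition; asymptotic independence of the four components then follows from their resting on distinct sources of randomness (Brownian increments, jump signs, jump sizes and their ordered combinations).

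The step I expect to be the main obstacle is the joint analysis of the spike-tail sum (b) and its correction $2\Delta_n\sum_{j<q}\Delta X_{T_j}$ in the regime $\beta_n\Delta_n\asymp1$, $\lambda_n/\beta_n\asymp1$, where the contributions of earlier spikes to $\Delta^n_{i(n,q)}X$ and $\Delta^n_{i(n,q)+1}X$ are genuinely $O(1)$: one must compute their conditional means exactly (the source of the factor $\tfrac{e^{\beta_n\Delta_n}-1}{\beta_n\Delta_n}-\tfrac{\beta_n\Delta_n}{1-e^{-\beta_n\Delta_n}}$ in $\mathcal M_n^{\mathrm{oracle}}$), verify that the built-in correction cancels them up to the residual $\mathcal M_n^{\mathrm{oracle}}$, and control the centred fluctuation via the martingale CLT above — all uniformly over the interplay between $\lambda_n$, $\beta_n$ and $\Delta_n$ allowed by Assumptions \ref{assumpsigma}, \ref{betalambda} and \ref{assumpestimatorn}.
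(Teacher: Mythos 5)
Your proposal is correct in substance and reproduces the same decomposition as the paper: a law of large numbers for the denominator (after discarding the $O_{\mathbb P}(\lambda_n^2\Delta_n)$ indices outside $\mathcal E_n$, which is the content of the paper's Lemmas \ref{approxlemma1}--\ref{approxlemma3}), a split of the numerator into the leading relaxation term $-(1-e^{-\beta_n\Delta_n})D_n$, the spike tails together with the built-in correction $2\Delta_n\sum_{j<q}\Delta X_{T_j}$, and the two Brownian sums on the intervals $i(n,q)$ and $i(n,q)+1$; these give exactly the bias $\mathcal M_n^{\mathrm{oracle}}$ and the four scales $\mathcal V_n^{(1)},\dots,\mathcal V_n^{(4)}$, and the final Taylor expansion of $-\Delta_n^{-1}\log$ is handled as in the paper's Step 6. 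Where you genuinely diverge is in the treatment of the jump-driven fluctuations in parts iii) and iv): the paper rewrites the spike-tail sum via Fubini as a deterministic term plus a first-order and a second-order integral against the compensated measure $\underline p-\lambda_n\underline q$, and obtains the joint normality of $(\mathcal J_n^{(1)},\mathcal J_n^{(2)})$ from Peccati--Taqqu's CLT for multiple Poisson integrals, whereas you propose an Abel reordering plus Lyapunov for $\mathcal J_n^{(1)}$ and a martingale CLT (in the jump index) for $\mathcal J_n^{(2)}$. Both routes work, and yours is arguably more elementary; its cost is that the asymptotic independence of $\mathcal J_n^{(1)}$ and $\mathcal J_n^{(2)}$ cannot be dismissed as ``distinct sources of randomness'' --- both are functionals of the same Poisson measure, and after your Abel reordering the weight $\sum_{q>j}\mathrm{sgn}(\Delta X_{T_q})$ is itself random and correlated with the damped tail sums --- so you would need to verify joint convergence (e.g.\ via Cram\'er--Wold on the combined martingale array), which the chaos-orthogonality built into the paper's representation delivers automatically. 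Your identification of the conditions' roles ($|x|^3\star\nu<\infty$ and non-degeneracy for iii), $\beta_n/\lambda_n^2\to0$ and $|x|^4\star\nu<\infty$ for iv)) matches the paper.
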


Proposition \ref{estimatorc} is the core of Theorem \ref{estimatorcj}. 

\subsubsection*{Proof of Proposition \ref{estimatorc}} \label{proof prop core}
\noindent {\it Step 1).} We first need three approximation results.
\begin{lemma} \label{approxlemma1}
We have
$$
\big|\sum_{q \notin \mathcal E_n} \mathrm{sgn}\left(\Delta X_{T_q}\right) \Delta^n_{i\left(n,q\right)} X\big| 
\lesssim \lambda_n^2 \Delta_n
$$
in probability.
\end{lemma}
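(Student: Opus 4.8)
\emph{Set-up and reduction.} The sum is taken over the exceptional indices $q\notin\mathcal E_n$, and the whole point is that such indices are rare. Indeed, writing $I_i=((i-1)\Delta_n,i\Delta_n]$, for $q<N_1$ one has $q\notin\mathcal E_n$ only if $i(n,q+1)\le i(n,q)+1$, hence $T_{q+1}\le(i(n,q)+1)\Delta_n$ and therefore $T_{q+1}-T_q<2\Delta_n$ (using $T_q>(i(n,q)-1)\Delta_n$); the remaining index $q=N_1$ lies in $\mathcal E_n$ unless the last jump falls in $I_n$, an event of probability $\le 1-e^{-\lambda_n\Delta_n}\to 0$ which I discard (it does not affect convergences in probability). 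Using $|\mathrm{sgn}|\le 1$ together with $\Delta^n_iZ^{\beta_n}=-(1-e^{-\beta_n\Delta_n})Z^{\beta_n}_{t_{i-1,n}}+\sum_{r:\,T_r\in I_i}\Delta X_{T_r}e^{-\beta_n(t_{i,n}-T_r)}$, I bound
\[
\Big|\sum_{q\notin\mathcal E_n}\mathrm{sgn}(\Delta X_{T_q})\,\Delta^n_{i(n,q)}X\Big|\le\sum_{q\notin\mathcal E_n}|\Delta^n_{i(n,q)}X|\le\Sigma^{(1)}_n+\Sigma^{(2)}_n+\Sigma^{(3)}_n,
\]
with $\Sigma^{(1)}_n=\sum_{q\notin\mathcal E_n}|\Delta^n_{i(n,q)}X^c|$, $\Sigma^{(2)}_n=(1-e^{-\beta_n\Delta_n})\sum_{q\notin\mathcal E_n}|Z^{\beta_n}_{t_{i(n,q)-1,n}}|$ and $\Sigma^{(3)}_n=\sum_{q\notin\mathcal E_n}\sum_{r:\,T_r\in I_{i(n,q)}}|\Delta X_{T_r}|$. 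I will prove $\E[\Sigma^{(1)}_n+\Sigma^{(2)}_n+\Sigma^{(3)}_n]\lesssim\lambda_n^2\Delta_n$ and conclude by Markov's inequality.

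\emph{The combinatorial tool.} Conditionally on the jump configuration $(N_1,(T_r))$, the sizes $(\Delta X_{T_r})$ are i.i.d.\ $\sim\nu$ (with $\E|\xi_\nu|=|x|\star\nu<\infty$ since $\nu$ is a probability measure with $|x|^2\star\nu<\infty$), and $X^c$ is independent of $\underline p$. Hence everything reduces to controlling, for the rate-$\lambda_n$ Poisson process of jump times, expectations of the form $\E\big[\#\{(q_1,\dots,q_k):q_1<\dots<q_k,\ T_{q_k}-T_{q_1}<c\Delta_n\}\big]$; since the $k$-point correlation function of the jump times is bounded by $\lambda_n^k$, such a quantity is $\le\lambda_n^k(c\Delta_n)^{k-1}=\lambda_n(c\lambda_n\Delta_n)^{k-1}$, which is $\lesssim\lambda_n^2\Delta_n$ for every $k\ge2$ because $\lambda_n\Delta_n\to0$. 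In particular $\E[\#\{q\notin\mathcal E_n\}]\le\E[\#\{q:T_{q+1}-T_q<2\Delta_n\}]\lesssim\lambda_n^2\Delta_n$.

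\emph{The three sums.} For $\Sigma^{(3)}_n$, conditioning on the configuration gives $\E[\Sigma^{(3)}_n]=(|x|\star\nu)\,\E\big[\sum_{q\notin\mathcal E_n}\Delta^n_{i(n,q)}N\big]$; splitting $\Delta^n_{i(n,q)}N=1+(\Delta^n_{i(n,q)}N-1)$, the leading part is $\E[\#\{q\notin\mathcal E_n\}]\lesssim\lambda_n^2\Delta_n$, while the remainder counts configurations with an extra jump in the sub-interval of $T_q$ plus a jump within $2\Delta_n$ of $T_q$, i.e.\ close pairs and triples, so by the tool above it is $\lesssim\lambda_n^2\Delta_n+\lambda_n^3\Delta_n^2+\cdots\lesssim\lambda_n^2\Delta_n$. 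For $\Sigma^{(1)}_n$, independence of $X^c$ from the jumps and $\E|\Delta^n_iX^c|\le\bar\sigma\sqrt{\Delta_n}$ give $\E[\Sigma^{(1)}_n]\le\bar\sigma\sqrt{\Delta_n}\,\E[\#\{q\notin\mathcal E_n\}]\lesssim\lambda_n^2\Delta_n^{3/2}=o(\lambda_n^2\Delta_n)$. For $\Sigma^{(2)}_n$ one must not bound $|Z^{\beta_n}_{t_{i(n,q)-1,n}}|$ by $\sup_{[0,1]}|Z^{\beta_n}|$ (which is only of order $\lambda_n$ and too crude); instead $|Z^{\beta_n}_{t_{i(n,q)-1,n}}|\le e^{\beta_n\Delta_n}\sum_{r<q}|\Delta X_{T_r}|e^{-\beta_n(T_q-T_r)}$, and after replacing $|\Delta X_{T_r}|$ by $|x|\star\nu$ one uses that, along the Poisson timeline, the event $\{q\notin\mathcal E_n\}$ (essentially $\{T_{q+1}-T_q<2\Delta_n\}$, measurable w.r.t.\ the process after $T_q$) is, given that $T_q$ is a jump, independent of $\sum_{r<q}e^{-\beta_n(T_q-T_r)}$ (measurable w.r.t.\ the process before $T_q$), whose mean is $\int_0^{T_q}\lambda_ne^{-\beta_n(T_q-s)}ds\le\lambda_n/\beta_n$. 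A Mecke-formula computation then yields $\E[\Sigma^{(2)}_n]\lesssim(1-e^{-\beta_n\Delta_n})\cdot\lambda_n\cdot\lambda_n\Delta_n\cdot\tfrac{\lambda_n}{\beta_n}\le\beta_n\Delta_n\cdot\tfrac{\lambda_n^3\Delta_n}{\beta_n}=\lambda_n^3\Delta_n^2=o(\lambda_n^2\Delta_n)$. Summing the three estimates and applying Markov's inequality finishes the proof.

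\emph{Main obstacle.} The delicate term is $\Sigma^{(2)}_n$: the naive supremum bound on the shot-noise process loses a factor of order $\beta_n\lambda_n\Delta_n$, which need not be small, so one is forced to exploit the conditional independence, along the Poisson timeline, between the rare-pair event defining an exceptional index and the relaxation mass $Z^{\beta_n}$ accumulated just before it. Keeping the bookkeeping of close $k$-tuples of jumps under control — so that the geometric series in $\lambda_n\Delta_n$ converges — is the other point requiring care.
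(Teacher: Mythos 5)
Your proof is correct and follows essentially the same route as the paper: the same three-way decomposition into the Brownian increment, the relaxation term $(1-e^{-\beta_n\Delta_n})Z^{\beta_n}_{t_{i(n,q)-1,n}}$, and the within-interval jumps, with the key step in both being the independence between the accumulated shot noise before an interval and the (rare, probability $\lesssim\lambda_n^2\Delta_n^2$ per interval) event making an index exceptional. The only minor divergence is that the paper controls the $X^c$ contribution by a centring-plus-variance argument (giving order $\lambda_n\Delta_n$) whereas you use a first-moment bound (giving $\lambda_n^2\Delta_n^{3/2}$); both suffice for the stated rate.
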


\begin{proof}
We plan to use the decomposition
$$\sum_{q \notin \mathcal E_n} \mathrm{sgn}\left(\Delta X_{T_q}\right) \Delta^n_{i\left(n,q\right)} X  = \sum_{q=1}^{N_1} \mathrm{sgn}\left(\Delta X_{T_q}\right) \Delta^n_{i\left(n,q\right)} X {\bf 1}_{\{T_{q+1}-T_q < t_{i\left(n,q\right),n} - T_q + \Delta_n\}} = I+II,$$
with 
$$I  =\sum_{q=1}^{N_1}\mathrm{sgn}\left(\Delta X_{T_q}\right) \Delta^n_{i\left(n,q\right)} X^c {\bf 1}_{\{T_{q+1}-T_q < t_{i\left(n,q\right),n} - T_q + \Delta_n\}} \;\;\text{and}\;\;$$
$$II = \sum_{q=1}^{N_1} \mathrm{sgn}\left(\Delta X_{T_q}\right)\Delta^n_{i\left(n,q\right)} Z^{\beta} {\bf 1}_{\{T_{q+1}-T_q < t_{i\left(n,q\right),n} - T_q + \Delta_n\}}.$$
The term $I$ is centred and as $|I| \leq \sum_{q=1}^{N_1}  |\Delta^n_{i\left(n,q\right)} X^c| {\bf 1}_{\{T_{q+1}-T_q < t_{i\left(n,q\right),n} - T_q + \Delta_n\}}$, we have
\begin{align*}
\E[I^2]  &\leq  \, \mathbb{E}\big[(\sum_{q=1}^{N_1} |\Delta^n_{i\left(n,q\right)} X^c| {\bf 1}_{\{T_{q+1}-T_q < t_{i\left(n,q\right),n} - T_q + \Delta_n\}})^2 \big] \\
& \leq \mathbb{E}\big[(\sum_{i=1}^n |\Delta^n_{i} X^c| \Delta_i^n N {\bf 1}_{\{\Delta^n_i N \geq 2 \text{ or } \Delta^n_{i+1} N \geq 1\}})^2\big]\\
&= \sum_{i=1}^n \mathbb{E}\big[(\Delta^n_{i} X^c)^2\big] \mathbb{E}\big[(\Delta_i^n N)^2 {\bf 1}_{\{\Delta^n_i N \geq 2 \text{ or } \Delta^n_{i+1} N \geq 1}\big]
\end{align*}
with the convention $\Delta^n_{n+1} N  = 0$. Since 
$\mathbb{E}[(\Delta^n_{i} X^c)^2] 
\leq \bar{\sigma}^2 \Delta_n$ and $\mathbb{E}[(\Delta^n_i N)^2 ({\bf 1}_{\{\Delta^n_i N \geq 2\}} + {\bf 1}_{\{\Delta^n_{i+1} N \geq 1\}} )] \lesssim \lambda_n^2 \Delta_n^2$
we obtain
$$\E[I^2]  \lesssim (\lambda_n \Delta_n)^2.$$
In turn $I$ is of order $\lambda_n\Delta_n$ hence negligible. For the term $II$, we have $$|II| \leq \sum_{q=1}^{N_1} |\Delta^n_{i\left(n,q\right)} Z^{\beta}| {\bf 1}_{\{T_{q+1}-T_q < t_{i\left(n,q\right),n} - T_q + \Delta_n\}},$$
and 
\begin{align*}
|\Delta^n_{i\left(n,q\right)} Z^{\beta}| &= |-\left(1-e^{-\beta_n \Delta_n}\right)Z_{t_{i\left(n,q\right)-1,n}}^\beta + \sum_{j=1}^{\Delta^n_{i\left(n,q\right)}N} \Delta_{i\left(n,q\right)}^j Xe^{-\beta_n(t_{i\left(n,q\right),n} - T_{i\left(n,q\right)}^j)}|\\
&\leq \beta_n \Delta_n  |Z_{t_{i\left(n,q\right)-1,n}}^\beta| + \sum_{j=1}^{\Delta^n_{i\left(n,q\right)}N} |\Delta X^j_{i\left(n,q\right)}|, 
\end{align*}
where $\Delta X^j_i$ denotes the $j$th jump in the interval $\left(\left(i-1\right)\Delta_n, i\Delta_n\right]$ that occurs at time $T^j_i$.
First, we have that the term $\sum_{q=1}^{N_1} |Z^{\beta}_{t_{i\left(n,q\right)-1,n}} |{\bf 1}_{\{T_{q+1}-T_q < t_{i\left(n,q\right),n} - T_q + \Delta_n\}}$ is dominated by
$$\sum_{i=1}^n \int_{0}^{t_{i-1,n}} |x| e^{-\beta_n\left(t_{i-1,n}-t\right)} \underline{p}\left(dt,dx\right) \Delta^n_i N \big({\bf 1}_{\{\Delta^n_i N \geq 2\}} + {\bf 1}_{\{\Delta^n_{i+1} N \geq 1\}} \big).$$
Because of the independence of $\Delta^n_i N$ and $\Delta^n_{i+1} N$ conditional on $\mathcal{F}_{t_{i-1,n},n}$, we derive that its expectation is less than
$$\sum_{i=1}^n \mathbb{E}\big[\int_{0}^{t_{i-1,n}} |x| e^{-\beta_n\left(t_{i-1,n}-t\right)} \underline{p}(dt,dx)\big] \mathbb{E}\big[\Delta^n_i N({\bf 1}_{\{\Delta^n_i N \geq 2\}} + {\bf 1}_{\{\Delta^n_{i+1} N \geq 1\}} )\big].$$
Second, since $\mathbb{E}[\int_{0}^{t_{i-1,n}} |x| e^{-\beta_n\left(t_{i-1,n}-t\right)} \underline{p}(dt,dx)] \lesssim \lambda_n/\beta_n$
and also $\mathbb{E}[\Delta^n_i N ({\bf 1}_{\{\Delta^n_i N \geq 2\}} + {\bf 1}_{\{\Delta^n_{i+1} N \geq 1\}} )] \lesssim \lambda_n^2 \Delta_n^2$
we derive
\begin{equation} \label{boundGalileo}
\sum_{q=1}^{N_1} |Z^{\beta}_{t_{i\left(n,q\right)-1,n}} |{\bf 1}_{\{T_{q+1}-T_q < t_{i\left(n,q\right),n} - T_q + \Delta_n\}} \lesssim \frac{\lambda_n^3 \Delta_n}{\beta_n}
\end{equation}
in probability. In the same way, it is not difficult to see that 
\begin{align*}
\sum_{q=1}^{N_1}  \sum_{j=1}^{\Delta^n_{i\left(n,q\right)}N} |\Delta X^j_{i\left(n,q\right)}| {\bf 1}_{\{T_{q+1}-T_q < 2\Delta_n\}} &\leq \sum_{i=1}^n \Delta_i^n N \sum_{j=1}^{\Delta_i^n N} |\Delta X_i^j| ({\bf 1}_{\{\Delta^n_i N \geq 2\}} + {\bf 1}_{\{\Delta^n_{i+1} N \geq 1\}} )
\end{align*}
is of order $\lambda_n^2 \Delta_n$. The result of the lemma follows.
\end{proof}

\begin{lemma} \label{approxlemma2}
We have
$$\big| \sum_{q=1}^{N_1}\mathrm{sgn}(\Delta X_{T_q})\Delta^n_{i\left(n,q\right)}X - \int_{0}^{1} \int_{\mathbb{R}} |x| e^{-\beta_n((\lfloor t\Delta_n^{-1}\rfloor + 1)\Delta_n - t )} \underline{p}\left(dt,dx\right) \big| \lesssim \lambda_n^2 \Delta_n + \sqrt{\lambda_n \Delta_n}$$
in probability.
\end{lemma}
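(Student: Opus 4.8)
\emph{Strategy and decomposition.} The plan is to split the left-hand sum along $\Delta^n_{i(n,q)}X=\Delta^n_{i(n,q)}X^c+\Delta^n_{i(n,q)}Z^{\beta_n}$, to extract from the jump part an \emph{exact} identity with the announced integral, and to show the two leftover pieces are of the stated orders. Since a.s. no $T_q$ is a multiple of $\Delta_n$, the relation $(i(n,q)-1)\Delta_n<T_q\le i(n,q)\Delta_n$ gives $i(n,q)=\lfloor T_q\Delta_n^{-1}\rfloor+1$, hence $t_{i(n,q),n}=(\lfloor T_q\Delta_n^{-1}\rfloor+1)\Delta_n$, and therefore
\[
\int_0^1\int_{\mathbb{R}}|x|e^{-\beta_n((\lfloor t\Delta_n^{-1}\rfloor+1)\Delta_n-t)}\underline{p}(dt,dx)=\sum_{q=1}^{N_1}|\Delta X_{T_q}|\,e^{-\beta_n(t_{i(n,q),n}-T_q)}.
\]
On the event $\mathcal G_n=\{\max_{1\le i\le n}\Delta^n_iN\le1\}$, whose complement has probability $\lesssim\lambda_n^2\Delta_n\to0$ (as in the proof of Proposition~\ref{estimatorn}), there is exactly one jump in the interval of index $i(n,q)$, so $\Delta^n_{i(n,q)}Z^{\beta_n}=-(1-e^{-\beta_n\Delta_n})Z^{\beta_n}_{t_{i(n,q)-1,n}}+e^{-\beta_n(t_{i(n,q),n}-T_q)}\Delta X_{T_q}$. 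Multiplying by $\mathrm{sgn}(\Delta X_{T_q})$ and summing, the last term reproduces the integral above exactly, so on $\mathcal G_n$ the quantity to control equals $I-II$ with $I=\sum_q\mathrm{sgn}(\Delta X_{T_q})\Delta^n_{i(n,q)}X^c$ and $II=(1-e^{-\beta_n\Delta_n})\sum_q\mathrm{sgn}(\Delta X_{T_q})Z^{\beta_n}_{t_{i(n,q)-1,n}}$. Since $\mathbb P(\mathcal G_n^c)\to0$, it suffices to bound $|I|\mathbf 1_{\mathcal G_n}$ and $|II|$ in probability and invoke Markov's inequality.

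\emph{The term $I$.} On $\mathcal G_n$ the intervals $((i(n,q)-1)\Delta_n,i(n,q)\Delta_n]$ are pairwise disjoint. Conditionally on $\underline{p}$ the signs $\mathrm{sgn}(\Delta X_{T_q})$, the indices $i(n,q)$ and $\mathbf 1_{\mathcal G_n}$ are fixed, while the increments $\Delta^n_{i(n,q)}X^c=\int_{t_{i(n,q)-1,n}}^{t_{i(n,q),n}}\sigma_s\,dW_s$ are centred and uncorrelated (recall we may take $\mu\equiv0$ and $\sigma$ deterministic). Hence $\mathbb E[(I\mathbf 1_{\mathcal G_n})^2]\le\mathbb E\big[\sum_q\int_{t_{i(n,q)-1,n}}^{t_{i(n,q),n}}\sigma_s^2\,ds\big]\le\bar\sigma^2\Delta_n\,\mathbb E[N_1]=\bar\sigma^2\lambda_n\Delta_n$, so that $I\mathbf 1_{\mathcal G_n}=O_{\mathbb P}(\sqrt{\lambda_n\Delta_n})$.

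\emph{The term $II$, and the main obstacle.} From $|Z^{\beta_n}_t|\le\int_0^t\int_{\mathbb{R}}|x|e^{-\beta_n(t-s)}\underline{p}(ds,dx)$ and Fubini one gets the a priori bound $\mathbb E[|Z^{\beta_n}_t|]\le\lambda_n(|x|\star\nu)\beta_n^{-1}(1-e^{-\beta_n t})\lesssim\lambda_n/\beta_n$, uniformly in $t\in[0,1]$. Writing $|II|\le(1-e^{-\beta_n\Delta_n})\sum_{i=1}^n|Z^{\beta_n}_{t_{i-1,n}}|\,\Delta^n_iN$ and using that $\Delta^n_iN$ is independent of $\mathcal F_{t_{i-1,n}}$ with mean $\lambda_n\Delta_n$, we obtain $\mathbb E[|II|]\le(1-e^{-\beta_n\Delta_n})\lambda_n\Delta_n\sum_{i=1}^n\mathbb E[|Z^{\beta_n}_{t_{i-1,n}}|]\lesssim\beta_n\Delta_n\cdot\lambda_n\Delta_n\cdot n\cdot(\lambda_n/\beta_n)=\lambda_n^2\Delta_n$, using $1-e^{-\beta_n\Delta_n}\le\beta_n\Delta_n$ and $n\Delta_n=1$. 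Thus $II=O_{\mathbb P}(\lambda_n^2\Delta_n)$, and combining with the bound on $I$ and $\mathbb P(\mathcal G_n^c)\to0$ yields the lemma. The only delicate point is this sharp $\lambda_n^2\Delta_n$ control of the accumulated relaxation term $II$: it hinges on the uniform estimate $\mathbb E[|Z^{\beta_n}_t|]\lesssim\lambda_n/\beta_n$, on the conditional-independence telescoping with $n\Delta_n=1$, and on absorbing the rare multi-jump intervals into $\mathcal G_n$ rather than estimating them term by term.
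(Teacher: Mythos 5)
Your argument is correct and follows essentially the same route as the paper: the same split of $\Delta^n_{i(n,q)}X$ into a Brownian part, the within-interval jump contribution that reproduces the Poisson integral exactly, and the accumulated relaxation term $-(1-e^{-\beta_n\Delta_n})\sum_q \mathrm{sgn}(\Delta X_{T_q})Z^{\beta_n}_{t_{i(n,q)-1,n}}$; the same variance computation of order $\lambda_n\Delta_n$ for the Brownian part using independence of $X^c$ and $\underline p$; and the same key estimate $\E[|Z^{\beta_n}_t|]\lesssim \lambda_n/\beta_n$ combined with the independence of $\Delta^n_iN$ from $\mathcal F_{t_{i-1,n}}$ to get the $\lambda_n^2\Delta_n$ bound (the paper phrases this last step as an expectation bound on a double Poisson integral $\beta_n\Delta_n\int_0^1\int_0^t|y|e^{-\beta_n(t-s)}\underline p(ds,dy)\,\underline p(dt,dx)$, which is the same computation).

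The one point to flag is your treatment of multi-jump intervals. You localize to $\mathcal G_n=\{\max_i\Delta^n_iN\le1\}$ and discard the complement using $\PP(\mathcal G_n^c)\lesssim\lambda_n^2\Delta_n\to0$. That convergence requires $\lambda_n^2\Delta_n\to0$, which holds under Assumption \ref{assumpestimatorn} (and hence in Theorem \ref{estimatorcj}) but is \emph{not} implied by Assumptions \ref{assumpsigma} and \ref{betalambda} alone, which are the only hypotheses in force for Proposition \ref{estimatorc} where this lemma sits (e.g.\ $\lambda_n=n^{2/3}$, $\Delta_n=n^{-1}$ satisfies $\lambda_n\Delta_n\to0$ and $\lambda_n\lesssim\beta_n$ but $\lambda_n^2\Delta_n\to\infty$). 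The paper avoids this by keeping the multi-jump contributions as explicit remainders: the extra jumps inside an interval contribute at most $\sum_i\Delta^n_iN\sum_j|\Delta X^j_i|\mathbf 1_{\{\Delta^n_iN\ge2\}}$, whose expectation is of order $\lambda_n^2\Delta_n$ by $\E[\Delta^n_iN(\Delta^n_iN-1)]\lesssim\lambda_n^2\Delta_n^2$, so the stated bound holds whether or not $\lambda_n^2\Delta_n$ vanishes. Your proof is therefore complete in the regime where the lemma is actually used downstream, but to match the generality of the statement you should replace the localization by this term-by-term estimate of the multi-jump remainder (a two-line modification).
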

\begin{proof}
We plan to use the decomposition $\sum_{q=1}^{N_1}\mathrm{sgn}(\Delta X_{T_q})\Delta_{i\left(n,q\right)}X=I+II$, with
$$I  = \sum_{q=1}^{N_1} \mathrm{sgn}\left(\Delta X_{T_q}\right) \Delta^n_{i\left(n,q\right)} X^c,\;\;\text{and} \;\;
II  = \sum_{q=1}^{N_1} \mathrm{sgn}\left(\Delta X_{T_q}\right) \Delta^n_{i\left(n,q\right)} Z^{\beta}.$$
With the notation of Lemma \ref{approxlemma1}, we write
\begin{align*}
I &= \sum_{i=1}^n \sum_{j=1}^{\Delta^n_i N} \mathrm{sgn}(\Delta X^j_i)  \Delta^n_{i} X^c {\bf 1}_{\{\Delta_i^n N \geq 1\}} 
\end{align*}
and in the same way as for the proof of Lemma \ref{approxlemma1}, using the independence between $X^c$ and $N$, it is not difficult to see that $I$ is centred with variance of order $\lambda_n \Delta_n$. For the second term, we write $II=III+IV$, with
\begin{align*}
III & = \sum_{q=1}^{N_1} \mathrm{sgn}(\Delta X_{T_q}) \sum_{j=1}^{\Delta^n_{i\left(n,q\right)}N} \Delta X_{i\left(n,q\right)}^j e^{-\beta_n((\lfloor T_q\Delta_n^{-1} \rfloor+ 1)\Delta_n - T_{i\left(n,q\right)}^j)}, \\ 
IV & = - (1-e^{-\beta_n \Delta_n}) \sum_{q=1}^{N_1} \mathrm{sgn}(\Delta X_{T_q}) Z_{t_{i\left(n,q\right)-1,n}}^\beta.
\end{align*}
In the same way as in Lemma \ref{approxlemma1}, the term $III$ is equal to
$$ \sum_{q=1}^{N_1} \mathrm{sgn}(\Delta X_{T_q}) \big( \Delta X_{T_q} e^{-\beta_n((\lfloor T_q\Delta_n^{-1}\rfloor + 1)\Delta_n - T_q)} 
+ \sum_{\substack{j=1, \\T_{i\left(n,q\right)}^j \neq T_q}}^{\Delta^n_{i\left(n,q\right)} N } \Delta X^i_{q} e^{-\beta_n((\lfloor T_q\Delta_n^{-1} \rfloor + 1)\Delta_n - T^j_{i\left(n,q\right)})} {\bf 1}_{\{\Delta^n_{i\left(n,q\right)} N \geq 2\}} \big),$$
which is nothing but
$ \int_{0}^{1} \int_{\mathbb{R}} |x| e^{-\beta_n((\lfloor t\Delta_n^{-1}\rfloor + 1)\Delta_n - t)} \underline{p}\left(dt,dx\right)$
plus a remainder term of order $\lambda_n^2 \Delta_n$ in probability. Finally
\begin{align*}
| IV | & \leq 
\beta_n \Delta_n \sum_{q=1}^{N_1} \int_{0}^{t_{i\left(n,q\right)-1,n}} e^{-\beta_n (t_{i\left(n,q\right)-1,n}-t)}|x| \underline{p}(dt,dx) \\
& \leq \beta_n \Delta_n e^{\beta_n \Delta_n} \sum_{q=1}^{N_1} \int_{0}^{T_q^-} e^{-\beta_n (T_q^- -t )} |x| \underline{p}(dt,dx) \\
& = \beta_n \Delta_n e^{\beta_n \Delta_n} \int_{0}^1 \int_{0}^t |y|e^{-\beta_n (t-s)} \underline{p}(ds,dy) \underline{p}(dt,dx),
\end{align*}
and this term has expectation of order $\beta_n \Delta_n \lambda_n^2/\beta_n \lesssim \lambda_n^2 \Delta_n$.
\end{proof}

\begin{lemma} \label{approxlemma3}
We have
$$
\Big| \frac{\sum_{q \in \mathcal E_n} \mathrm{sgn}(\Delta X_{T_q}) \Delta^n_{i\left(n,q\right)} X}{\int_{0}^{1} \int_{\mathbb{R}} |x| e^{-\beta_n((\lfloor t\Delta_n^{-1}\rfloor + 1)\Delta_n - t)} \underline{p}\left(dt,dx\right)   } - 1 \Big| \lesssim \lambda_n \Delta_n + \sqrt{\Delta_n/\lambda_n}
$$
in probability.
\end{lemma}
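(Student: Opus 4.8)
The plan is to use Lemmas \ref{approxlemma1} and \ref{approxlemma2} to replace the numerator by the ``denominator''
$D_n := \int_0^1\int_{\mathbb{R}}|x|e^{-\beta_n((\lfloor t\Delta_n^{-1}\rfloor+1)\Delta_n-t)}\underline{p}(dt,dx)$
up to a negligible remainder, and then to show that $D_n$ is of exact order $\lambda_n$ in probability. Writing $\sum_{q\in\mathcal E_n}\mathrm{sgn}(\Delta X_{T_q})\Delta^n_{i(n,q)}X = \sum_{q=1}^{N_1}\mathrm{sgn}(\Delta X_{T_q})\Delta^n_{i(n,q)}X-\sum_{q\notin\mathcal E_n}\mathrm{sgn}(\Delta X_{T_q})\Delta^n_{i(n,q)}X$, Lemma \ref{approxlemma2} bounds the first sum minus $D_n$ by $\lambda_n^2\Delta_n+\sqrt{\lambda_n\Delta_n}$ and Lemma \ref{approxlemma1} bounds the second sum by $\lambda_n^2\Delta_n$, both in probability; hence $\sum_{q\in\mathcal E_n}\mathrm{sgn}(\Delta X_{T_q})\Delta^n_{i(n,q)}X=D_n+R_n$ with $|R_n|\lesssim\lambda_n^2\Delta_n+\sqrt{\lambda_n\Delta_n}$ in probability. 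On $\{N_1>0\}$ the quantity to be estimated is then exactly $R_n/D_n$, and since $\lambda_n^{-1}(\lambda_n^2\Delta_n+\sqrt{\lambda_n\Delta_n})=\lambda_n\Delta_n+\sqrt{\Delta_n/\lambda_n}$, it suffices to prove that $\lambda_n/D_n$ is bounded in probability on $\{N_1>0\}$.

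To that end, since $\underline{p}$ has compensator $\lambda_n\,dt\otimes\nu(dx)$ and, integrating cell by cell, $\int_0^1 e^{-c\beta_n((\lfloor t\Delta_n^{-1}\rfloor+1)\Delta_n-t)}dt=(c\beta_n\Delta_n)^{-1}(1-e^{-c\beta_n\Delta_n})$ for every $c>0$, one gets $\mathbb{E}[D_n]=\lambda_n\,(|x|\star\nu)\,\tfrac{1-e^{-\beta_n\Delta_n}}{\beta_n\Delta_n}$ and $\mathrm{Var}(D_n)=\lambda_n\,(|x|^2\star\nu)\,\tfrac{1-e^{-2\beta_n\Delta_n}}{2\beta_n\Delta_n}$. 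By Assumptions \ref{assumpsigma} and \ref{betalambda} we have $|x|^2\star\nu<\infty$ and $\beta_n\Delta_n\lesssim 1$, so the fractions above stay bounded away from $0$ and $\infty$, whence $\mathbb{E}[D_n]\asymp\lambda_n$ and $\mathrm{Var}(D_n)\lesssim\lambda_n$. If $\lambda_n\to\infty$, Chebyshev's inequality gives $D_n/\mathbb{E}[D_n]\to 1$ in probability, so $D_n\gtrsim\lambda_n$ with probability tending to $1$ and $\lambda_n/D_n=O_{\mathbb{P}}(1)$. If instead $\lambda_n$ stays bounded it suffices to control $1/D_n$, which follows from the pathwise estimate $D_n\geq e^{-\beta_n\Delta_n}\sum_{q=1}^{N_1}|\Delta X_{T_q}|\geq e^{-\beta_n\Delta_n}|\Delta X_{T_1}|$ valid on $\{N_1>0\}$, together with $e^{-\beta_n\Delta_n}$ bounded below (again $\beta_n\Delta_n\lesssim1$) and the fact that $\Delta X_{T_1}$ has law $\nu$ with $\nu(\{0\})=0$, so that $1/|\Delta X_{T_1}|$ is an almost surely finite random variable not depending on $n$, hence tight. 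Since boundedness in probability may be checked along subsequences and every subsequence contains a further one along which either $\lambda_n\to\infty$ or $\lambda_n$ is bounded, this yields $\lambda_n/D_n=O_{\mathbb{P}}(1)$ and completes the proof.

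The only genuinely delicate point is the lower bound $D_n\gtrsim\lambda_n$ in probability: the two asymptotic regimes must be handled by different arguments, a law-of-large-numbers (Chebyshev) estimate when $\lambda_n\to\infty$ and a pathwise estimate relying on $\nu(\{0\})=0$ to control $1/D_n$ when $\lambda_n$ stays bounded. Everything else is a mechanical consequence of Lemmas \ref{approxlemma1} and \ref{approxlemma2}.
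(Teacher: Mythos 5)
Your proposal is correct and follows essentially the same route as the paper: reduce to controlling the remainder from Lemmas \ref{approxlemma1} and \ref{approxlemma2} divided by $D_n$, and then show $\lambda_n/D_n=O_{\PP}(1)$ by splitting into the two regimes — a law of large numbers (the paper states $D_n/\big(\lambda_n\,|x|\star\nu\,\tfrac{1-e^{-\beta_n\Delta_n}}{\beta_n\Delta_n}\big)\to1$) when $\lambda_n\to\infty$, and boundedness in probability of $D_n^{-1}$ when $\lambda_n$ stays bounded. You merely make explicit the Chebyshev variance computation and the pathwise lower bound that the paper leaves implicit.
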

\begin{proof}
If $\sup_n\lambda_n< \infty$, then
$\big(\int_{0}^{1} \int_{\mathbb{R}} |x| e^{-\beta_n((\lfloor t\Delta_n^{-1} \rfloor+ 1)\Delta_n - t)} \underline{p}(dt,dx)\big)^{-1}$
is bounded in probability. Otherwise, using $\int_{0}^1 e^{-\beta_n((\lfloor t\Delta_n^{-1} + 1)\Delta_n - t)} dt = \frac{1-e^{-\beta_n \Delta_n}}{\beta_n \Delta_n}$, we have
$$
\frac{\int_{0}^{1} \int_{\mathbb{R}} |x| e^{-\beta_n((\lfloor t\Delta_n^{-1}\rfloor + 1)\Delta_n - t)} \underline{p}(dt,dx)}{ \lambda_n |x| \star \nu(\frac{1-e^{-\beta_n \Delta_n}}{\beta_n \Delta_n})} \rightarrow 1
$$
as $n \rightarrow \infty$ and the result follows by applying Lemma \ref{approxlemma1} and Lemma \ref{approxlemma2}.
\end{proof}

\medskip

\noindent {\it Step 2).} We are ready to prove Proposition \ref{estimatorc}. Define the oracle slope
$$\widehat s_n^{\text{oracle}} = -\frac{\sum_{q \in \mathcal E_n} \mathrm{sgn}(\Delta X_{T_q}) \big(\Delta^n_{i\left(n,q\right)+1} X+2\Delta_n \sum_{j=1}^{q-1}\Delta X_{T_j}\big)}{\sum_{q \in \mathcal E_n} \mathrm{sgn}(\Delta X_{T_q}) \Delta^n_{i\left(n,q\right)} X} {\bf 1}_{\{N_1 > 0\}}.$$
Using the canonical decomposition $X = X^c+Z^\beta$ and the fact that for $i \in \mathcal A_n$, we have 
$$-\Delta_i^n Z^{\beta} = (1-e^{-\beta_n \Delta_n}) Z^{\beta}_{t_{i-1,n},n}  
= (1-e^{-\beta_n \Delta_n})(\Delta^n_{i-1} Z^{\beta} + Z^{\beta}_{t_{i-2},n}),$$
and we write 
$$\widehat{s}_n^{{\mathrm{oracle}}} = I+II+III,$$ with
\begin{align*}
I & = -\frac{\sum_{q \in \mathcal E_n} \mathrm{sgn}(\Delta X_{T_q}) \Delta^n_{i\left(n,q\right)+1} X^c}{\sum_{q \in \mathcal E_n} \mathrm{sgn}(\Delta X_{T_q}) \Delta^n_{i\left(n,q\right)} X} {\bf 1}_{\{N_1 > 0\}},\\ 
II & = (1-e^{-\beta_n \Delta_n})\frac{\sum_{q \in \mathcal E_n} \mathrm{sgn}(\Delta X_{T_q}) \Delta^n_{i\left(n,q\right)} Z^{\beta}}{\sum_{q \in \mathcal E_n}  \mathrm{sgn}(\Delta X_{T_q})\Delta^n_{i\left(n,q\right)} X} {\bf 1}_{\{N_1 > 0\}}, \\
III &   = (1-e^{-\beta_n \Delta_n}) \frac{\sum_{q \in \mathcal E_n}  \mathrm{sgn}(\Delta X_{T_q})\big(Z^{\beta}_{t_{i\left(n,q\right)-1,n}} - \frac{2\Delta_n}{1-e^{-\beta_n \Delta_n}} \sum_{j=1}^{q-1}\Delta X_{T_j}\big)}{\sum_{q \in \mathcal E_n} \mathrm{sgn}(\Delta X_{T_q}) \Delta^n_{i\left(n,q\right)} X } {\bf 1}_{\{N_1 > 0\}}.
\end{align*}
We  study the convergence of each term separately.
\medskip

\noindent {\it Step 3)}. The  term $I$. From the proof of Lemma \ref{approxlemma1}, we readily have
$$\big|\sum_{q \notin \mathcal E_n}\mathrm{sgn}(\Delta X_{T_q}) \Delta^n_{i\left(n,q\right)+1} X^c
\big| \lesssim \lambda_n \Delta_n$$
in probability, so we shall replace the sum in $q$ over $\mathcal E_n$ by the sum in $q$ over $\{1,\ldots, N_1\}$ in the following. By Lemma \ref{approxlemma3}, we derive
\begin{equation} \label{first approx brownianpart}
I = IV(1+(\lambda_n\Delta_n+\sqrt{\Delta_n/\lambda_n})\mathcal R_n^{(1)}) +\Delta_n\mathcal R_n^{(2)},
\end{equation}
with
$$IV = -\frac{\sum_{q=1}^{N_1}  \mathrm{sgn}(\Delta X_{T_q}) \Delta^n_{i\left(n,q\right)+1} X^c }{\int_{0}^{1} \int_{\mathbb{R}} |x| e^{-\beta_n((\lfloor t\Delta_n^{-1} \rfloor+ 1)\Delta_n - t)} \underline{p}(dt,dx)}$$
and where both $\mathcal R_n^{(1)}$ and $\mathcal R_n^{(2)}$ are bounded in probability. Next, denoting by $\Delta X_i^j$ the $j$-th jump in $[(i-1)\Delta_n,i\Delta_n]$, we have 
\begin{align*}
 \sum_{q=1}^{N_1}  \mathrm{sgn}(\Delta X_{T_q}) \Delta^n_{i\left(n,q\right)+1} X^c 
& = \sum_{i=2}^n \mathrm{sgn}( \Delta X^1_{i-1})\Delta^n_i X^c  {\bf 1}_{\{\Delta^n_{i-1} N = 1\}} +  \sum_{i=2}^n\sum_{j=2}^{\Delta^n_{i-1} N}\mathrm{sgn}( \Delta X^j_i) \Delta^n_i X^c  {\bf 1}_{\{\Delta^n_i N \geq 2\}} \\
& = \sum_{i=2}^n \mathrm{sgn}( \Delta X^1_{i-1})\Delta^n_i X^c  {\bf 1}_{\{\Delta^n_{i-1} N = 1\}} +\lambda_n\Delta_n \mathcal R_n^{(3)},
\end{align*}
where $\mathcal R_n^{(3)}$ is bounded in probability, using the same argument as in Lemma \ref{approxlemma2}. It is not difficult to see that $ \sum_{i=2}^n \mathrm{sgn}( \Delta X^1_{i-1})\Delta^n_i X^c  {\bf 1}_{\{\Delta^n_{i-1} N \geq 1\}}$ is centred with variance $\lambda_n\Delta_n\int_{\Delta_n}^1\sigma_s^2ds$ up to an error of order $(\lambda_n\Delta_n)^2$.  Therefore, when $\sup_n \lambda_n<\infty$, we have that $IV$ is of order $\sqrt{\Delta_n}$ and of order $\sqrt{\Delta_n/\lambda_n}$ otherwise, using that $\int_0^1 \int_{\mathbb{R}} |x| e^{-\beta_n((\lfloor t\Delta_n^{-1}\rfloor + 1)\Delta_n - t)} \underline{p}\left(dt,dx\right)$ is equivalent to $\lambda_n \Delta_n |x| \star \nu\left(\frac{1-e^{-\beta_n\Delta_n}}{\beta_n \Delta_n}\right)$. We thus obtain the decomposition 
\begin{align*} \label{decomp finale I}
I &= \left(\frac{\beta_n \Delta_n}{1-e^{-\beta_n\Delta_n}}\right)\frac{\sqrt{\Delta_n \int_0^1 \sigma_s^2ds}}{\sqrt{\lambda_n}|x| \star \nu}  \mathcal J_{3}^{(n)} \numberthis\\
&= e^{-\beta_n\Delta_n} \beta_n \Delta_n \mathcal{V}_3^{(n)}J_3^{(n)},
\end{align*}
where $\mathcal J_{3}^{(n)}$ is bounded in probability and $\mathcal{V}_3^{(n)}$ is defined in the statement of Proposition \ref{estimatorc}. We investigate further the convergence of $\mathcal J_{3}^{(n)}$. Define 
$$V_n^2=\sum_{i=2}^n \E\big[(\Delta_i^nX^c)^2{\bf 1}_{\{\Delta_{i-1}^nN = 1\}}\,|\mathcal F_{t_{i-1,n}}^W\big].$$
Clearly, $\sum_{i=2}^n \mathrm{sgn}( \Delta X^1_{i-1})\Delta^n_i X^c  {\bf 1}_{\{\Delta^n_{i-1} N = 1\}}$ is centred and we claim that for every $\eta >0$:
\begin{equation} \label{lindeberg}
\sum_{i = 2}^n \E\big[V_n^{-2}(\Delta_i^nX^c)^2{\bf 1}_{\{\Delta_{i-1}^nN = 1\}}{\bf 1}_{\{V_n^{-1}|\Delta_i^nX^c| \geq \eta\}}{\bf 1}_{\{N_1\geq 1\}}\big] \rightarrow 0
\end{equation}
as $n \rightarrow \infty$.
Indeed, applying successively Cauchy-Schwarz's, Markov's and Burckolder-Davis-Gundy's inequality, we obtain
\begin{align*}
\mathbb{E}\big[(\Delta^n_i X^c)^2 {\bf 1}_{\{V_n^{-1}|\Delta^n_i X^c| \geq \eta\}} \,| \mathcal F^N_1\big]  &\leq  \mathbb{E}\big[(\Delta^n_i X^c)^4\big]^{1/2} \mathbb{P}\big[|\Delta^n_i X^c| \geq \eta V_n \,| \mathcal F_1^N\big]^{1/2} \\
& \leq \mathbb{E}\big[(\Delta^n_i X^c)^4\big]^{1/2} \frac{\mathbb{E}[(\Delta^n_i X^c)^2]^{1/2}}{\eta V_n}  \lesssim \eta^{-1}V_n^{-1}\Delta_n^{3/2}.
\end{align*} 
Next $V_n^2 = \sum_{i = 2}^n{\bf 1}_{\{\Delta_{i-1}^nN = 1\}}\int_{t_{i-1,n}}^{t_{i,n}}\sigma_s^2ds \geq \underline{\sigma}^2\Delta_n  \sum_{i = 2}^n{\bf 1}_{\{\Delta_{i-1}^nN =1\}} =  \underline{\sigma}^2\Delta_n \underline V_n^2$ say.
Summing up and taking expectation, it follows that
\begin{align*}
\sum_{i = 2}^n \E\big[V_n^{-2}(\Delta_i^nX^c)^2{\bf 1}_{\{\Delta_{i-1}^nN = 1\}}{\bf 1}_{\{V_n^{-1}|\Delta_i^nX^c| \geq \eta\}}\big] & \lesssim \eta^{-1}\Delta_n^{3/2}\sum_{i = 2}^n \E\big[V_n^{-3} {\bf 1}_{\{\Delta_{i-1}^nN = 1\}}{\bf 1}_{\{N_1\geq 1\}}\big] \\ 
  &\lesssim \E\big[\underline{V}_n^{-1}{\bf 1}_{\{N_1\geq 1\}}\big] \leq \E\big[(\underline{V}_n^2)^{-1}{\bf 1}_{\{N_1\geq 1\}}\big]^{1/2}
\end{align*}
by Jensen's inequality. Since $\underline{V}_n^2$ has a Binomial distribution with parameters $(n-1, \lambda_n\Delta_n e^{-\lambda_n\Delta_n})$, we have that $\underline{V}_n^2 \rightarrow \infty$ in probability since $\lambda_n \rightarrow \infty$ and is bounded below on $\{N_1\geq 1\}$ which has probability that converges to one, the Lindeberg condition \eqref{lindeberg} follows by dominated convergence and we further infer
$$\frac{1}{V_n}\sum_{i=2}^n \Delta^n_i X^c \mathrm{sgn}(\Delta^1_j X) {\bf 1}_{\{\Delta^n_{i-1} N = 1\}} \rightarrow \mathcal N(0,1)$$
in distribution as $n \rightarrow \infty$. Observing that $\frac{V_n^2}{\lambda_n\Delta_n} \rightarrow \int_0^1 \sigma_s^2ds$ in probability, in view of \eqref{first approx brownianpart}, we conclude
$$
\frac{1}{\sqrt{\lambda_n \Delta_n \int_0^1 \sigma^2_s ds}}\sum_{q=1}^{N_1} \mathrm{sgn}(\Delta X_{T_q})\Delta^n_{i\left(n,q\right)+1} X^c   \rightarrow \mathcal{N}\left(0,1\right)
$$
in distribution as $n\rightarrow \infty$ and likewise for $\mathcal J_{3}^{(n)}$ in view of \eqref{decomp finale I}.\\

\noindent {\it Step 4).} The term $II$. We write, using the proof of Lemma \ref{approxlemma1} and Lemma \ref{approxlemma3}, 
\begin{align*}
 & (1-e^{-\beta_n \Delta_n})^{-1}II \\
  = & \,1-\frac{\sum_{q \in \mathcal E_n}  \mathrm{sgn}(\Delta X_{T_q}) \Delta^n_{i\left(n,q\right)} X^c}{\sum_{q \in \mathcal E_n}  \mathrm{sgn}(\Delta X_{T_q}) \Delta^n_{i\left(n,q\right)}X } \\
 =&\, 1-
\frac{\sum_{q \in \mathcal E_n}  \mathrm{sgn}(\Delta X_{T_q}) \Delta^n_{i\left(n,q\right)} X^c}{\int_{0}^{1} \int_{\mathbb{R}} |x| e^{-\beta_n((\lfloor t\Delta_n^{-1}\rfloor + 1)\Delta_n - t)} \underline{p}\left(dt,dx\right)}\big(1+\max\{\lambda_n\Delta_n,\sqrt{\lambda_n/\Delta_n}\}\mathcal R_n^{(1)}\big)\\
 =&\, 1-
(\frac{\sum_{q =1}^{N_1}  \mathrm{sgn}(\Delta X_{T_q}) \Delta^n_{i\left(n,q\right)} X^c}{\int_{0}^{1} \int_{\mathbb{R}} |x| e^{-\beta_n((\lfloor t\Delta_n^{-1}\rfloor + 1)\Delta_n - t)} \underline{p}\left(dt,dx\right)} +  \Delta_n \mathcal R_n^{(2)})\big(1+\max\{\lambda_n\Delta_n,\sqrt{\lambda_n/\Delta_n}\}\mathcal R_n^{(1)}\big)  
\end{align*}
where $\mathcal R_n^{(1)}$ and $\mathcal R_n^{(2)}$ are bounded in probability. By Step 2), we know that 
\begin{align*}
\frac{\sum_{q =1}^{N_1}  \mathrm{sgn}(\Delta X_{T_q}) \Delta^n_{i\left(n,q\right)} X^c}{\int_{0}^{1} \int_{\mathbb{R}} |x| e^{-\beta_n((\lfloor t\Delta_n^{-1}\rfloor + 1)\Delta_n - t)} \underline{p}\left(dt,dx\right)} &= \left(\frac{\beta_n \Delta_n}{1-e^{-\beta_n\Delta_n}}\right)\tfrac{\sqrt{\Delta_n\int_0^1\sigma_s^2ds}}{\sqrt{\lambda_n}|x|\star \nu} \mathcal U^{(n)}\\
&= e^{-\beta_n\Delta_n}\left(\frac{\beta_n \Delta_n}{1-e^{-\beta_n\Delta_n}}\right) \mathcal{V}_4^{(n)},
\end{align*}
where $\mathcal U^{(n)}$ is bounded in probability and asymptotically normal if $\lambda_n \rightarrow \infty$ and $\mathcal{V}_4^{(n)}$ is defined in the statement of Proposition \ref{estimatorc}. Finally, we have proved
$$II = (1-e^{-\beta_n\Delta_n})\big(1+e^{-\beta_n\Delta_n}\left(\frac{\beta_n \Delta_n}{1-e^{-\beta_n\Delta_n}}\right) \mathcal{V}_4^{(n)}\mathcal J_4^{(n)}\big)$$
where $\mathcal J_4^{(n)}$ is bounded in probability and asymptotically normal if $\lambda_n \rightarrow \infty$.\\

\noindent {\it Step 4').} It is easily shown that $\sum_{i = 2}^n\E\big[(\Delta_i^nX^c)^2{\bf 1}_{\{\Delta_{i-1}N=1\}}{\bf 1}_{\{\Delta_i^nN=1\}}{\bf 1}_{\{N_1\geq 1\}}\,| \mathcal F_{t_{i-1,n}}\big]\rightarrow 0$ if $\lambda_n \rightarrow \infty$ as $n \rightarrow \infty$, so we actually have from Step 3) and Step 4) the joint convergence $(\mathcal J_{n}^{(3)},\mathcal J_{n}^{(4)}) \rightarrow \mathcal N(0,\mathrm{Id}_{\R^2})$ in distribution as $\lambda_n \rightarrow \infty$.\\

%
\noindent {\it Step 5).} The term $III$. By the proof of Lemma \ref{approxlemma1} and Lemma \ref{approxlemma3}, we have
\begin{align*}
 & (1-e^{-\beta_n\Delta_n})^{-1}III \\
  = &\,  \frac{\sum_{q \in \mathcal E_n}  \mathrm{sgn}(\Delta X_{T_q}) \big(Z^{\beta}_{t_{i\left(n,q\right)-1,n}} - \frac{2\Delta_n}{1-e^{-\beta_n \Delta_n}} \sum_{j=1}^{q-1}\Delta X_{T_j}\big)}{\int_{0}^{1} \int_{\mathbb{R}} |x| e^{-\beta_n((\lfloor t\Delta_n^{-1} \rfloor+ 1)\Delta_n - t)} \underline{p}(dt,dx)}\big(1+\max\{\lambda_n\Delta_n,\sqrt{\Delta_n/\lambda_n}\}\mathcal R_n^{(1)}\big) \\
  = & \, \Big(\frac{\sum_{q =1}^{N_1}  \mathrm{sgn}(\Delta X_{T_q}) \big(Z^{\beta}_{t_{i\left(n,q\right)-1,n}} - \frac{2\Delta_n}{1-e^{-\beta_n \Delta_n}} \sum_{j=1}^{q-1}\Delta X_{T_j}\big)}{\int_{0}^{1} \int_{\mathbb{R}} |x| e^{-\beta_n((\lfloor t\Delta_n^{-1} \rfloor+ 1)\Delta_n - t)} \underline{p}(dt,dx)}+\tfrac{\lambda_n^2}{\beta_n}\Delta_n \mathcal R_n^{(2)}\Big)\big(1+\max\{\lambda_n\Delta_n,\sqrt{\Delta_n/\lambda_n}\}\mathcal R_n^{(1)}\big)
\end{align*}
where $\mathcal R_n^{(1)}$ and $\mathcal R_n^{(2)}$ are bounded in probability. Indeed, in the same way as for Lemma \ref{approxlemma1}, we have
\begin{align*}
 & \big|\sum_{q \notin \mathcal E_n} \mathrm{sgn}(\Delta X_{T_q}) \big(Z^{\beta}_{t_{i\left(n,q\right)-1,n}} - \tfrac{2\Delta_n}{1-e^{-\beta_n \Delta_n}} \sum_{j=1}^{q-1}\Delta X_{T_j}\big)\big|  \\
 \leq & \,\sum_{q \notin \mathcal E_n}\big(|Z^{\beta}_{t_{i\left(n,q\right)-1,n}}| + \tfrac{2\Delta_n}{1-e^{-\beta_n \Delta_n}} \sum_{j=1}^{q-1}|\Delta X_{T_j}|\big) {\bf 1}_{\{T_{q+1}-T_q < 2\Delta_n\}} \lesssim \frac{\lambda_n^3\Delta_n}{\beta_n}
 \end{align*}
in probability, as follows from \eqref{boundGalileo} and the computations of Lemma \ref{approxlemma1}. When exactly one jump occurs in $(i\left(n,q\right)-1,i\left(n,q\right)]$, we have  $Z^{\beta}_{t_{i\left(n,q\right)-1,n}} = e^{-\beta_n(t_{i\left(n,q\right)-1,n} - T_q)} Z^{\beta}_{T_{q^-}}$ and 
\begin{align*}
& \sum_{q =1}^{N_1}  \mathrm{sgn}(\Delta X_{T_q}) \big(Z^{\beta}_{t_{i\left(n,q\right)-1,n}} - \tfrac{2\Delta_n}{1-e^{-\beta_n \Delta_n}} \sum_{j=1}^{q-1}\Delta X_{T_j}\big) \\
= & \int_{0 \leq s < t \leq 1}\int_{\mathbb{R}^2}   y\, \mathrm{sgn}(x)(e^{-\beta_n(t-s)} e^{-\beta_n(\lfloor t\Delta_n^{-1} \rfloor \Delta_n-t)}-\tfrac{2 \Delta_n}{1-e^{-\beta_n\Delta_n}}) \underline{p}(ds,dy) \underline{p}(dt,dx)  + \tfrac{\lambda_n^3\Delta_n}{\beta_n}\mathcal R_n^{(3)},
\end{align*}
where the remainder term $\mathcal R_n^{(3)}$ is bounded in probability and accounts for the case where more than one jump occurs in the intervals $(i\left(n,q\right)-1,i\left(n,q\right)]$.
By Fubini's theorem, the main term splits into $\mathcal M_n^{(1)}+ \mathcal M_n^{(2)}+ \mathcal M_n^{(3)}$, with 
\begin{align*}
\mathcal M_n^{(1)} & = \lambda_n^2 (x \star \nu) \;  (\mathrm{sgn}(x) \star \nu) \int_{0 \leq s < t \leq 1}\big(e^{-\beta_n(t-s)} e^{-\beta_n(\lfloor t\Delta_n^{-1} \rfloor \Delta_n-t)}-\tfrac{2 \Delta_n}{1-e^{-\beta_n\Delta_n}}\big) dt ds\\
& =  \tfrac{\lambda_n^2}{\beta_n} \,(x \star \nu) \; (\mathrm{sgn}\left(x\right) \star \nu) \big(\tfrac{e^{\beta_n \Delta_n}-1}{\beta_n \Delta_n} - \Delta_n \tfrac{1-e^{-\beta_n}}{1-e^{-\beta_n \Delta_n}}- \tfrac{\beta_n \Delta_n}{1-e^{-\beta_n \Delta_n}}  \big), \\
\mathcal M_n^{(2)} & = \lambda_n \int_{0}^1 \int_{\mathbb{R}} g_{n}^{(1)}(t,x)(\underline{p}-\lambda_n \underline{q})(dt,dx),\\
\mathcal M_n^{(3)} & = \int_{\left[0,1\right]^2 \times \mathbb{R}^2}  g_n^{(2)}\left(t,x,s,y\right) \left(\underline{p}-\lambda_n \underline{q}\right)\left(ds,dy\right)\left(\underline{p}-\lambda_n \underline{q}\right)\left(dt,dx\right),
\end{align*}
with
\begin{align*}g_{n}^{(1)}(t,x) &=  x  \, (\mathrm{sgn}(y) \star \nu) \int_{0 \leq s < t \leq 1}(e^{-\beta_n(s-t)} e^{-\beta_n(\lfloor s\Delta_n^{-1} \rfloor\Delta_n-s)}-\tfrac{2 \Delta_n}{1-e^{-\beta_n\Delta_n}}) ds \\
&+  (y \star \nu) \, \mathrm{sgn}(x) \int_{0 \leq s < t \leq 1}(e^{-\beta_n(t-s)} e^{-\beta_n(\lfloor t\Delta_n^{-1} \rfloor \Delta_n-t)}-\tfrac{2 \Delta_n}{1-e^{-\beta_n\Delta_n}}) ds 
\end{align*}
and
$$
g_n^{(2)}\left(t,x,s,y\right) =y \, \mathrm{sgn}(x){\bf 1}_{\{t > s\}}\big(e^{-\beta_n(t-s)} e^{-\beta_n(\lfloor t\Delta_n^{-1} \rfloor \Delta_n-t)}-\tfrac{2 \Delta_n}{1-e^{-\beta_n\Delta_n}}\big).
$$
By standard yet tedious computations, evaluating centred terms and their variances in terms of asymptotics in $n$, we can infer from the previous representation that
\begin{align*}
&\frac{\sum_{q =1}^{N_1}  \mathrm{sgn}(\Delta X_{T_q}) \big(Z^{\beta}_{t_{i\left(n,q\right)-1,n}} - \frac{2\Delta_n}{1-e^{-\beta_n \Delta_n}} \sum_{j=1}^{q-1}\Delta X_{T_j}\big)}{\int_{0}^{1} \int_{\mathbb{R}} |x| e^{-\beta_n((\lfloor t\Delta_n^{-1} \rfloor+ 1)\Delta_n - t)} \underline{p}(dt,dx)} \\
 = &\, \frac{\mathcal M_n^{(1)}}{\lambda_n (|x|\star \nu)\tfrac{1-e^{-\beta_n\Delta_n}}{\beta_n\Delta_n} } +\frac{\lambda_n\int_0^1 \int_{\mathbb{R}}g_n^{(3)}(t,x)(\underline{p}-\lambda_n\underline{q})(dt,dx)}{\int_{0}^{1} \int_{\mathbb{R}} |x| e^{-\beta_n((\lfloor t\Delta_n^{-1} \rfloor+ 1)\Delta_n - t)} \underline{p}(dt,dx)}\\
 &+\tfrac{\beta_n\Delta_n}{1-e^{-\beta_n\Delta_n}}\min\big\{\sqrt{\tfrac{x^2\star\nu}{2(|x|\star\nu)^2\beta_n}\frac{1-e^{-2\beta_n\Delta_n}}{2\beta_n\Delta_n}},\lambda_n\beta_n^{-1}\big\}\mathcal J_n^{(2)},
\end{align*}
where $\mathcal J_n^{(2)}$ is bounded in probability and $g_n^{(3)}(t,x) = -\lambda_n^{-2}\frac{\mathcal M_n^{(1)}}{|y|\star\nu\left(\frac{1-e^{-\beta_n\Delta_n}}{\beta_n\Delta_n}\right)}|x|e^{-\beta_n((\lfloor t\Delta_n^{-1}\rfloor+1)\Delta_n-t)}+g_n^{(1)}(t,x)$. It is not difficult to check that
$$\frac{\lambda_n\int_0^1g_n^{(3)}(t,x)(\underline{p}-\lambda_n\underline{q})(dt,dx)}{\int_{0}^{1} \int_{\mathbb{R}} |x| e^{-\beta_n((\lfloor t\Delta_n^{-1} \rfloor+ 1)\Delta_n - t)} \underline{p}(dt,dx)} = e^{-\beta_n \Delta_n}\mathcal V_n^{(1)}\mathcal J_n^{(1)}$$
where $\mathcal V_n^{(1)}$ is defined in the statement of Proposition \ref{estimatorc} and $\mathcal J_n^{(1)}$ is bounded in probability. It follows that
\begin{align*}
&\frac{1-e^{-\beta_n\Delta_n}}{\beta_n\Delta_n} \frac{\sum_{q =1}^{N_1}  \mathrm{sgn}(\Delta X_{T_q}) \big(Z^{\beta}_{t_{i\left(n,q\right)-1,n}} - \frac{2\Delta_n}{1-e^{-\beta_n \Delta_n}} \sum_{j=1}^{q-1}\Delta X_{T_j}\big)}{\int_{0}^{1} \int_{\mathbb{R}} |x| e^{-\beta_n((\lfloor t\Delta_n^{-1} \rfloor+ 1)\Delta_n - t)} \underline{p}(dt,dx)} \\
& = \, \frac{\mathcal M_n^{(1)}}{\lambda_n (|x|\star \nu)} +  e^{-\beta_n \Delta_n}\mathcal V_n^{(1)}\mathcal J_n^{(1)}+\min\big\{\sqrt{\tfrac{x^2\star\nu}{2(|x|\star\nu)^2\beta_n}\frac{1-e^{-2\beta_n\Delta_n}}{2\beta_n\Delta_n}},\lambda_n\beta_n^{-1}\big\}\mathcal J_n^{(2)}.
\end{align*}
We eventually obtain the decomposition 
\begin{align*}
III & = \beta_n\Delta_n \underline{\mathcal M}_n  + \beta_n\Delta_n e^{-\beta_n \Delta_n}\mathcal V_n^{(1)}\mathcal J_n^{(1)}+\beta_n\Delta_ne^{-\beta_n \Delta_n}\mathcal V_n^{(2)}\mathcal J_n^{(2)}
\end{align*}
with $\underline{\mathcal M}_n = \tfrac{\lambda_n}{\beta_n} \,\frac{(x \star \nu) \; (\mathrm{sgn}\left(x\right) \star \nu)}{|x| \star \nu} \Big(\tfrac{e^{\beta_n \Delta_n}-1}{\beta_n \Delta_n} - \Delta_n \tfrac{1-e^{-\beta_n}}{1-e^{-\beta_n \Delta_n}}- \tfrac{\beta_n \Delta_n}{1-e^{-\beta_n \Delta_n}}  \Big)$ and $\mathcal V_n^{(2)}$ defined in the statement of Proposition \ref{estimatorc}.\\ 

\noindent {\it Step 5').} Using classical results of \cite[Theorem 3]{peccati08}, one can further investigate the asymptotic distribution of $(\mathcal J_n^{(1)}, \mathcal J_n^{(2)})$ under the additional 
assumption  $|x|^3 \star \nu < \infty$ assuring the existence of $\int_{0}^1 \int_{\mathbb{R}}g^3_n\left(t,x\right) \nu\left(dx\right) dt$ and the condition 
\[\left(\mathrm{sgn}\left(x\right) \star \nu \right)^2(|x|^2 \star \nu) + \left(x \star \nu \right)^2 - 2 (\mathrm{sgn}\left(x\right) \star \nu) (|x|^2 \star \nu) \neq 0,\]
otherwise the term $\mathcal V_n^{(1)}$ becomes negligible. We then have that $\mathcal J_n^{(1)}$ is asymptotically normal, and under the stronger assumption
$|x|^4 \star \nu < \infty$, we even have the convergence of $(\mathcal J_n^{(1)}, \mathcal J_n^{(2)})$ towards a standard two-dimensional Gaussian distribution. We omit the details.\\

\noindent {\it Step 6).}
Combining Step 2) and the results of Steps 3), 4) and 5), we obtain
$$
\widehat{s}_n^{\text{oracle}} = \left(1-e^{-\beta_n \Delta_n}\right) + \beta_n \Delta_n \underline{\mathcal M}_n + \beta_n \Delta_n e^{-\beta_n\Delta_n}\mathcal V_{n}^T \mathcal J_{n}
$$
with the notation introduced in the statement of Proposition \ref{estimatorc}.
%
Now, let $\widehat{\beta}_n^{\text{oracle}} = -\frac{1}{\Delta_n}\log(\max\{1-\widehat{s}_n^{\text{oracle}},\Delta_n\})$.
For $n$ large enough,
a first-order Taylor's expansion entails
\[\widehat{\beta}_n^{\text{oracle}} = \beta_n + \beta_n e^{\beta_n \Delta_n} \left(\underline{\mathcal M}_{n} + e^{-\beta_n\Delta_n}\mathcal V_{n}^T \underline{\mathcal J}_{n}\right)  \]
where the random vector  $\underline{\mathcal J}_{n}$ has the same asymptotic properties as $\mathcal J_{n}$. Setting $\mathcal{M}^{oracle}_n = e^{\beta_n \Delta_n} \underline{\mathcal M}_{n}$, and checking that all the terms have the right order, we obtained the desired result and Proposition \ref{estimatorc} is proved.

\subsubsection*{Completion of proof of Theorem \ref{estimatorcj}}
We start by a simple approximation result.
\begin{lemma} \label{lemma sgn}
We have
$$
\mathbb{P}\big( \mathrm{sgn}(\Delta X_{T_q})  = \mathrm{sgn}(\Delta_{i\left(n,q\right)}^n X)\;\;\text{for every}\;\;1 \leq q \leq N_1\big) \rightarrow 1.
$$
\end{lemma}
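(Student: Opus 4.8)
The plan is to reduce the statement to the sign estimates already established within the proof of Proposition \ref{estimatorn}. First I would place myself on the event $\Omega_n$, which has probability tending to $1$ by Proposition \ref{estimatorn}; there the detected indices coincide with the true ones, $\mathcal I_n(q) = i(n,q)$, so it suffices to control $\mathrm{sgn}(\Delta_{i(n,q)}^n X)$ for $1 \le q \le N_1$. Intersecting further with $\{\max_{1\le i\le n}\Delta_i^n N \le 1\}$ under Assumption \ref{assumpestimatorn} (I), or with $\mathcal B_n$ under Assumption \ref{assumpestimatorn} (II), exactly one jump falls in the interval $((i(n,q)-1)\Delta_n, i(n,q)\Delta_n]$, and the decomposition recalled just before \eqref{dominf1} yields, for every such $q$,
$$\Delta_{i(n,q)}^n X = e^{-\beta_n(i(n,q)\Delta_n - T_q)}\,\Delta X_{T_q} + R_q, \qquad R_q := \Delta_{i(n,q)}^n X^c - (1-e^{-\beta_n\Delta_n})\, Z^{\beta}_{t_{i(n,q)-1,n}},$$
whence $|R_q| \le |\Delta_{i(n,q)}^n X^c| + \beta_n\Delta_n\,|Z^{\beta}_{t_{i(n,q)-1,n}}|$.

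The observation that makes this work is that $0 \le i(n,q)\Delta_n - T_q < \Delta_n$, so the leading coefficient satisfies $e^{-\beta_n(i(n,q)\Delta_n - T_q)} \ge e^{-\beta_n\Delta_n}$, and therefore it is enough to prove that
$$\mathbb{P}\Big(\min_{1 \le q \le N_1} e^{-\beta_n\Delta_n}|\Delta X_{T_q}| > \max_{1 \le q \le N_1}|R_q|\Big) \rightarrow 1.$$
On that event $\mathrm{sgn}(\Delta_{i(n,q)}^n X) = \mathrm{sgn}\big(e^{-\beta_n(i(n,q)\Delta_n - T_q)}\Delta X_{T_q}\big) = \mathrm{sgn}(\Delta X_{T_q})$ simultaneously for all $q$, which is the claim. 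Splitting $|R_q|$ into its two pieces and using $\max_{i\in\mathcal A_n^c}|\Delta_i^n X^c| \ge |\Delta_{i(n,q)}^n X^c|$ together with $\max_{1\le i\le n}\beta_n\Delta_n|Z^\beta_{t_{i,n}}| \ge \beta_n\Delta_n|Z^\beta_{t_{i(n,q)-1,n}}|$, this is precisely (a weakening of) the convergences \eqref{dominf2} and \eqref{dominf3}, which were established with a spare factor $3$; under Assumption \ref{assumpestimatorn} (II) one substitutes the corresponding bounds from the proof of Proposition \ref{estimatorn} in that case, where the control of $\max_i \beta_n\Delta_n|Z^\beta_{t_{i,n}}|$ comes with the extra damping factor $e^{-\beta_n\Delta_n k_n}$.

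The only delicate point is bookkeeping: I would check that every ``bad'' probability invoked — the one for $\max_i|\Delta_i^n X^c|$, the one for $\sup_t|Z^\beta_t|$, and the one for the minimal jump size — is one of those already shown to vanish in the proof of Proposition \ref{estimatorn}, so that no condition on $(\lambda_n,\beta_n,\Delta_n)$ beyond Assumptions \ref{assumpsigma}, \ref{betalambda} and \ref{assumpestimatorn} is required. Since all of them appear verbatim there, the lemma follows with no additional work.
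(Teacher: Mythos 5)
Your proposal is correct and follows essentially the same route as the paper: on the one-jump event you isolate the dominant term $e^{-\beta_n(i(n,q)\Delta_n-T_q)}\Delta X_{T_q}$ in $\Delta^n_{i(n,q)}X$ and reduce the claim to the vanishing probabilities \eqref{dominf2}--\eqref{dominf3} (and, under Assumption \ref{assumpestimatorn} (II), to the bound behind \eqref{convsupb2c1} with the extra $e^{-\beta_n\Delta_n k_n}$ damping), exactly as the paper does. If anything, your version is slightly cleaner in keeping the coefficient $e^{-\beta_n(i(n,q)\Delta_n-T_q)}\ge e^{-\beta_n\Delta_n}$ explicit where the paper writes it as $1$.
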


\begin{proof}
Note first that $\PP(\Omega_n  \cap \{\sup_{|t-s| < 2\Delta_n}| N_t - N_s| = 1)\rightarrow 1$ if $\lambda_n^2 \Delta_n \rightarrow 0$. We first work under Assumption \ref{assumpestimatorn} (I).  In this case, we have $\beta_n \Delta_n \rightarrow 0$ and we claim that
\begin{equation} \label{inf sgn}
\mathbb{P}\Big(\min_{1\leq q \leq N_1}\tfrac{\Delta^n_{i\left(n,q\right)} X}{\Delta X_{T_q}}  \leq 0  \big) \rightarrow 0
\end{equation}
from which Lemma \ref{lemma sgn} readily follows.
For all $q \geq 1$, we have
\begin{align*}\tfrac{\Delta^n_{i\left(n,q\right)} X}{\Delta X_{T_q}} &= 1 + \tfrac{\Delta^n_{i\left(n,q\right)} X^c}{\Delta X_{T_q}} - \tfrac{\left(1-e^{-\beta_n \Delta_n}\right) Z^{\beta}_{i\left(n,q\right)-1}}{\Delta X_{T_q}} \\
&\geq 1 - \tfrac{|\Delta^n_{i\left(n,q\right)} X^c|}{|\Delta X_{T_q}|} - \tfrac{\beta_n \Delta_n|Z^{\beta}_{i\left(n,q\right)-1}|}{|\Delta X_{T_q}|}\\
&\geq 1 - \frac{\max_{i \in {\mathcal A_n^c}} |\Delta^n_{i} X^c|}{\min_{1 \leq i \leq N_1} |\Delta X_{T_{i}}|} - \frac{\beta_n \Delta_n\max_{1 \leq i \leq n} |Z^{\beta}_{t_{i,n}}|}{\min_{1 \leq i \leq n} |\Delta X_{T_{i}}| }.
\end{align*}
Thus, 
$$
\mathbb{P}\Big(\min_{1\leq q \leq N_1}\tfrac{\Delta^n_{i\left(n,q\right)} X}{\Delta X_{T_q}}  \leq 0  \big) \leq \mathbb{P}\Big(\max_{i \in \mathcal A_n^c} |\Delta^n_{i} X^c|  + \beta \Delta_n \max_{1 \leq i \leq n} |Z^{\beta}_{t_{i,n}}|  \geq  \min_{1 \leq i \leq N_1}|\Delta X_{T_{i}}|  \Big)
$$
and this term converges to $0$ in the same way as the terms \eqref{dominf2} and \eqref{dominf3}. The convergence \eqref{inf sgn} follows and Lemma \ref{lemma sgn} is proved. The case where Assumption \ref{assumpestimatorn} (II) is fulfilled corresponds to showing the convergence \eqref{convsupb2c1} and the proof follows likewise.
\end{proof}

We are ready to prove Theorem  \ref{estimatorcj}.\\ 

\noindent {\it Step 1).} We prove the result on $\Omega_n\cap \big\{\mathrm{sgn}\left(\Delta X_{T_q}\right)  = \mathrm{sgn}(\Delta_{i\left(n,q\right)}^n X) \; \text{for every}\; 1\leq q \leq N_1\big\} \cap \{\sup_{|t-s| < 2\Delta_n}| N_t - N_s| = 1\big\}$ thanks to Lemma \ref{lemma sgn}. On this event, the quantity
\[\widehat s_n =  -\frac{\sum_{q=1}^{\widehat \lambda_n} \mathrm{sgn}(\Delta^n_{\mathcal{I}_n\left(q\right)} X) \big( \Delta_{\mathcal{I}_n\left(q\right)+1}^n X  + 2\Delta_n\sum_{j=1}^{q-1} \Delta^n_{\mathcal{I}_n\left(j\right)} X\big)}{\sum_{q=1}^{\widehat \lambda_n}\mathrm{sgn}(\Delta^n_{\mathcal{I}_n\left(q\right)} X) \Delta_{\mathcal{I}_n\left(q\right)}^n X}\]
is equal to 
 \[-\frac{\sum_{q=1}^{N_1} \mathrm{sgn}(\Delta^n_{i(n,q)} X) \big( \Delta_{i(n,q)+1}^n X  + 2\Delta_n\sum_{j=1}^{q-1} \Delta^n_{i\left(n, j \right)} X\big)}{\sum_{q=1}^{N_1}\mathrm{sgn}(\Delta^n_{i(n,q)} X) \Delta_{i(n,q)}^n X}\]
and
$$
\widehat s_n^{\mathrm{oracle}} = -\frac{\sum_{q=1}^{N_1} \mathrm{sgn}(\Delta X_{T_q}) \big( \Delta_{i\left(n,q\right)+1}^n X  + 2\Delta_n\sum_{j=1}^{q-1} \Delta X_{T_j}\big)}{\sum_{q=1}^{N_1} \mathrm{sgn}(\Delta X_{T_q})  \Delta_{i\left(n,q\right)}^n X}.
$$
It follows that $\widehat s_n = \widehat s_n^{\mathrm{oracle}} + \mathcal R_n^{(1)}$, with
$$\mathcal R_n^{(1)} =  2\Delta_n \frac{\sum_{q=1}^{N_1} \mathrm{sgn}(\Delta X_{T_q}) \sum_{j=1}^{q-1} (\Delta X_{T_j} - \Delta^n_{i\left(n,j\right)}X)}{\sum_{q=1}^{N_1} \mathrm{sgn}(\Delta X_{T_q})  \Delta_{i\left(n,q\right)}^n X} = I + II,
$$
with
$$I = 2\Delta_n\frac{\sum_{q=1}^{N_1} \mathrm{sgn}(\Delta X_{T_q}) \sum_{j=1}^{q-1} \Delta^n_{i\left(n,j\right)} \int_0^t (1-e^{-\beta_n(t-s)})x \underline{p}(dt,dx)}{\sum_{q=1}^{N_1} \mathrm{sgn}(\Delta X_{T_q})  \Delta_{i\left(n,q\right)}^n X}$$
and
$$II = - 2\Delta_n \frac{\sum_{q=1}^{N_1} \mathrm{sgn}(\Delta X_{T_q}) \sum_{j=1}^{q-1} \Delta^n_{i\left(n,j\right)} X^c}{\sum_{q=1}^{N_1} \mathrm{sgn}(\Delta X_{T_q})  \Delta_{i\left(n,q\right)}^n X}. $$
say. \\

\noindent {\it Step 2).} We quickly study each term separately. The term $I$ further splits into $I=III+IV$, with
\begin{align*}
III & = 2\Delta_n \frac{\sum_{q=1}^{N_1} \mathrm{sgn}(\Delta X_{T_q}) \sum_{j=1}^{q-1} \Delta X_{T_j}(1-e^{-\beta_n(t_{i\left(n,j\right),n} - T_j)})}{\sum_{q=1}^{N_1} \mathrm{sgn}(\Delta X_{T_q})  \Delta_{i\left(n,q\right)}^n X}
,\\
IV & = 2\Delta_n\frac{\sum_{q=1}^{N_1} \mathrm{sgn}(\Delta X_{T_q}) \sum_{j=1}^{q-1} Z_{t_{i\left(n,j\right)-1,n}}\left(1-e^{-\beta_n \Delta_n}\right)}{\sum_{q=1}^{N_1} \mathrm{sgn}(\Delta X_{T_q})  \Delta_{i\left(n,q\right)}^n X}.
\end{align*}
With the same kind of arguments as developed in the proof of Proposition \ref{estimatorc}, it is not difficult to see that $|IV| \lesssim \Delta_n^2 \lambda_n^2$ in probability. For the term $III$, we use the same kind of arguments and obtain
\begin{align*}
III & = \tilde{\mathcal{M}}_n+\beta_n\Delta_n\big(\sqrt{\lambda_n}\Delta_n+\sqrt{\Delta_n}\beta_n^{-1}\big)\mathcal R_n^{(2)},
\end{align*}
with 
$$\tilde{\mathcal{M}}_n = \frac{2\Delta_n \lambda_n \mathrm{sgn}(x) \star \nu \, (x \star \nu)\big(\tfrac{1}{2} - \tfrac{1-e^{-\beta_n\Delta_n}}{2\beta_n\Delta_n} 
 + \tfrac{1+e^{-\beta_n\Delta_n}}{2\beta_n} - \tfrac{1-e^{-\beta_n\Delta_n}}{\beta_n^2\Delta_n}\big)}{(|x| \star \nu)\tfrac{1-e^{-\beta_n \Delta_n}}{\beta_n \Delta_n}}$$
and $\mathcal R_n^{(2)}$ is bounded in probability. We also have $|II| \lesssim \Delta_n^{3/2}$ in probability. We omit the details.\\

\noindent {\it Step 3).} Finally we obtain the decomposition
$$\widehat s_n = \widehat s_n^{\mathrm{oracle}}+\tilde{\mathcal{M}}_n+\beta_n\Delta_n\big(\sqrt{\lambda_n}\Delta_n+\sqrt{\Delta_n}\beta_n^{-1}\big)\mathcal R_n^{(2)}$$
and we conclude by applying Proposition \ref{estimatorc}, replacing $\widehat s_n$ by $\widehat s_n^{\mathrm{oracle}}$ and studying the order of each term carefully.

\vip

\noindent {\bf Acknowledgements.} This research is supported by the department OSIRIS (Optimization, SImulation, RIsk and Statistics for Energy Markets) of EDF and by the FiME (Finance for Energy Markets) Research Initiative. The inputs of two anonymous referees are greatfully acknowledged.

\bibliographystyle{plain}
\bibliography{DeFeHo_Sub}
\end{document}